\theoremstyle{definition}
\def\fnum{equation} 
\newtheorem{Thm}[\fnum]{Theorem}
\newtheorem{Cor}[\fnum]{Corollary}
\newtheorem{Lem}[\fnum]{Lemma}
\newtheorem{Exa}[\fnum]{Example}
\newtheorem{Rem}[\fnum]{Remark}
\newtheorem{Pro}[\fnum]{Proposition}
\newtheorem{Obs}[\fnum]{Observation}
\newtheorem{Not}[\fnum]{Notation}
\newtheorem{Def}[\fnum]{Definition}
\newtheorem{mainthm}{Theorem}
\numberwithin{equation}{section}
\newcommand{\diam}{{\text {diam}}}
\newcommand{\supp}{{\text {supp}}}
\newcommand{\inte}{{\text {int}}}
\newcommand{\dime}{{\text {dim}}}
\newcommand{\conv}{{\text {conv}}}
\newcommand{\Id}{\text{Id}}
\newcommand{\iso}{\text{Iso}}
\newcommand{\vv}{\mathrm{v}}
\newcommand{\ww}{\mathrm{w}}
\newcommand{\vol}{\text{vol}}
\newcommand{\rcd}{\text{RCD}}
\newcommand{\ric}{\text{Ric}}
\newcommand{\mm}{\mathfrak{m}}
\DeclareMathOperator{\Geo}{Geo}
\title{Non-collapsed eGH convergence and dimension}
\author{Jes\'us N\'u\~nez-Zimbr\'on}
\address{\parbox{\linewidth}{Jes\'us N\'u\~nez-Zimbr\'on\\ Universidad Nacional Aut\'onoma de M\'exico\\ nunez-zimbron@ciencias.unam.mx}}
\author{Jaime Santos-Rodr\'iguez} \address{\parbox{\linewidth}{Jaime Santos-Rodr\'iguez\\ Universidad Polit\'ecnica de Madrid\\ jaime.santos@upm.es}} 
\author{Sergio Zamora}
\address{\parbox{\linewidth}{Sergio Zamora\\ Oregon State University\\  zamorabs@oregonstate.edu}}
\begin{document}

\maketitle

\begin{abstract}
Let  $(X_i,p_i)$  be a non-collapsing sequence of pointed $n$-dimensional Riemannian manifolds with a uniform lower Ricci curvature bound, and  $G_i \leq \iso (X_i)$ a sequence of closed subgroups of isometries. 
We show that if the triples $(X_i, G_i, p_i)$ converge in the equivariant Gromov--Hausdorff sense to a triple $(X,G,p)$, then  $\dime (G) \geq \limsup _{i \to \infty} \dime (G_i)$, 
generalizing a result of Mazur--Rong--Wang to the non-compact setting.

The argument also applies in the non-smooth setting of $\rcd$ spaces.
As an application, we investigate $\rcd$ spaces with large isometry groups, 
extending results of Galaz-Garc\'ia--Kell--Mondino--Sosa and Galaz-Garc\'ia--Guijarro.

\end{abstract}

{
\hypersetup{linkcolor=black}
\tableofcontents
}

\section{Introduction}

Equivariant Gromov--Hausdorff convergence has proven itself to be a 
successful tool for studying the geometric and topological structure of spaces 
with lower curvature bounds (see for example \cite{fukaya-yamaguchi, kapovitch-petrunin-tuschmann, kapovitch-wilking, rong}).
The goal of this paper is to investigate certain relationships 
between the sequence of groups and the limit group in this context.

Let  $(X_i,p_i)$  be a sequence of proper pointed metric spaces and 
$G_i \leq \iso (X_i)$ a sequence of closed groups of isometries.  
Assume 
\[  (X_i,G_i, p_i) \xrightarrow{eGH} (X,G,p) .   \]
Let us recall some established results in this context:
\begin{enumerate}
    \item In the case that the spaces $X_i$ are Riemannian manifolds with a uniform lower bound on sectional curvature, it was proved by Fukaya-Yamaguchi \cite{fukaya-yamaguchi-1994} that $\iso(X)$ (and thus also $G$) is a Lie group which is compact if $X$ is compact. The same result in the case of a uniform lower bound on the Ricci curvature was obtained by Colding--Naber \cite{colding-naber}. These results were extended to the context of Alexandrov spaces by Galaz-Garc\'ia--Guijarro \cite{galaz-guijarro} and to the setting of $\rcd$ spaces by Guijarro--Santos-Rodríguez \cite{guijarro-santos} and Sosa \cite{sosa}.
    \item By the work of Grove--Karcher--Ruh \cite{grove-karcher-ruh}, if the orbits $G_i \cdot p_i$ are uniformly bounded and $G$ is a Lie group, then the maps $\phi _i : G_i \to G$ demonstrating the convergence 
can be taken to be continuous group homomorphisms for $i$ large enough (see also \cite{harvey, alattar}). 
    \item If in addition $(X_i,p_i)$ is a non-collapsing sequence of $n$-dimensional Riemannian manifolds 
(or more generally, Alexandrov spaces) 
with a uniform lower sectional curvature bound,  it was proven independently by Mazur--Rong--Wang \cite{mazur-rong-wang} and Harvey \cite{harvey} that  the continuous group homomorphisms $\phi_i$ are injective for $i$ large enough. In particular, this implies 
\begin{equation}\label{eq:dim-semicont}
      \dime  (G) \geq \limsup _{i \to \infty } \dime (G_i) . 
\end{equation}
\end{enumerate}

Of particular interest to us are two natural questions raised by Harvey:
\begin{itemize}
    \item Can the Alexandrov hypothesis be weakened to a lower Ricci curvature bound?
    \item Does \eqref{eq:dim-semicont} hold without the uniform boundedness
    assumption on the orbits?
\end{itemize}
In the proof of 
\cite{mazur-rong-wang}, the curvature condition was only used to show that the sequence $\iso (X_i)$ has the \emph{no small subgroup property} as in Definition \ref{def:ss1}. Since  non-collapsing sequences of Riemannian manifolds with a uniform lower Ricci curvature bound also have the no small subgroup property \cite[Theorem 0.8]{pan-rong}, the proof in 
\cite{mazur-rong-wang} carries over to that setting, giving a positive answer to the first question (see also \cite[Theorem C]{alattar} for a similar result in a more general setting). 

On the other hand, the second question is much more difficult to address since when $G$ is not compact, the maps $\phi_i : G_i \to G $ demonstrating the convergence cannot in general be taken to be group homomorphisms.   For example, for any Lie group $G$ and any Riemannian metric $g_0 $ on $G$, if $X_i : = (G, i g_0)$ for each $i \in \mathbb{N}$, then $(X_i , G , e) \xrightarrow{eGH} (\mathbb{R}^n , \mathbb{R}^n , 0)$ with $n = \dim (G)$. However, many Lie groups $G$ do not admit non-trivial group homomorphisms $G \to \mathbb{R}^n$.

Nevertheless,  a version of \eqref{eq:dim-semicont} in this setting  was obtained in 
\cite[Corollary 3.3]{pan-ye} assuming the groups are torsion-free abelian.
The first main result of this paper establishes \eqref{eq:dim-semicont}
without any additional assumption on the groups, answering Harvey's second question in the affirmative.  

\color{black}

\begin{mainthm}\label{thm:main}
    Let $(X_i,p_i)$ be a sequence of pointed complete $n$-dimensional Riemannian manifolds with 
    \begin{equation}\label{eq:non-collapsed}
      \ric (X_i) \geq - (n-1), \qquad \vol (B_1(p_i)) \geq v > 0 ,      
    \end{equation}
     and let $G_i \leq \iso (X_i)$ be a sequence of closed subgroups. If 
    \[    ( X_i , G_i, p _i) \xrightarrow{eGH} (X,G, p)    ,           \]
    then 
    \[   \dime (G) \geq \limsup _{i \to \infty } \dime (G_i).          \]
\end{mainthm}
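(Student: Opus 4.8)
The plan is to establish a \emph{local} version of Harvey's result \cite[Proposition 4.1]{harvey} — namely that, near the identity, the maps witnessing the convergence can be corrected to injective continuous homomorphisms — which already yields \eqref{eq:dim-semicont}. Two preliminary observations set the stage. First, as $(X_i,p_i)$ is non-collapsed with $\ric(X_i)\ge-(n-1)$, the limit $X$ is a non-collapsed $\rcd(-(n-1),n)$ space, so $\iso(X)$, and hence its closed subgroup $G$, is a Lie group; write $d:=\dime(G)$ and let $\mathfrak g$ be its Lie algebra. Second, by \cite[Theorem 0.8]{pan-rong} there exist $\delta=\delta(n,v)>0$ and $i_0$ such that for $i\ge i_0$ the group $\iso(X_i)$ contains no nontrivial subgroup all of whose elements move $\overline{B_1(p_i)}$ by less than $\delta$. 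We metrize $\iso(X_i)$ by the left-invariant proper metric $d_i(g,h):=\max_{x\in\overline{B_1(p_i)}}d(gx,hx)$, which induces the usual topology (and $\iso(X)$ similarly by $d_\infty$); the no-small-subgroups property becomes: no nontrivial subgroup of $\iso(X_i)$ is contained in the $d_i$-ball $B_\delta(\Id)$.

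Next I would unwind the equivariant convergence. For each $\epsilon>0$ and $i$ large it provides, besides Gromov--Hausdorff approximations between $B_R(p_i)$ and $B_R(p)$ for a suitable fixed $R$, a map $\phi_i\colon B_{3\delta}(\Id)\cap G_i\to G$ which is an $\epsilon$-approximate homomorphism and an $\epsilon$-isometry onto its image with $\phi_i(\Id)$ within $\epsilon$ of $\Id$; moreover the error $\epsilon=\epsilon_i$ may be taken to go to $0$ as $i\to\infty$, while $\delta$ is fixed. Following \cite[Proposition 4.1]{harvey} — which rests on the center-of-mass averaging of \cite{grove-karcher-ruh} — but carrying the construction out only on the fixed ball $B_\delta(\Id)\cap G_i$, where the barycenters used are unambiguously defined inside the Lie group $G$, the restriction $\phi_i|_{B_\delta(\Id)\cap G_i}$ can be corrected, for $i$ large, to a continuous \emph{local homomorphism} $\tilde\phi_i\colon B_\delta(\Id)\cap G_i\to G$ which is still an $\epsilon_i$-isometry onto its image with $\tilde\phi_i(\Id)$ within $\epsilon_i$ of $\Id$. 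A continuous local homomorphism between Lie groups is smooth, so its differential $A_i:=d(\tilde\phi_i)_{\Id}\colon\mathfrak g_i\to\mathfrak g$ is a Lie algebra homomorphism and $\tilde\phi_i(\exp_{G_i}(\tau w))=\exp_G(\tau A_iw)$ whenever $\exp_{G_i}([0,\tau]w)\subseteq B_\delta(\Id)$.

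The heart of the matter is that $A_i$ is injective once $\epsilon_i<\delta/8$; this forces $\dime(G_i)=\dime(\mathfrak g_i)\le\dime(\mathfrak g)=d$ for all large $i$, which is \eqref{eq:dim-semicont}. Assume not, and pick $0\ne v\in\ker A_i$; then $\gamma(\tau):=\exp_{G_i}(\tau v)$ is a nonconstant one-parameter subgroup of $G_i$ with $\tilde\phi_i(\gamma(\tau))=\Id$ as long as $\gamma([0,\tau])\subseteq B_\delta(\Id)$. Set $H:=\overline{\{\gamma(\tau):\tau\in\mathbb R\}}$, a nontrivial closed connected abelian subgroup of $\iso(X_i)$. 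If $\diam_{d_i}(H)<\delta$ then $H\subseteq B_\delta(\Id)$, a nontrivial subgroup, contradicting the no-small-subgroups property. Otherwise the continuous function $\tau\mapsto d_i(\gamma(\tau),\Id)$ vanishes at $0$ and has supremum at least $\diam_{d_i}(H)/2\ge\delta/2$, so there is $t_0$ with $\gamma([0,t_0])\subseteq B_\delta(\Id)$ and $d_i(\gamma(t_0),\Id)=\delta/4$; but then
\[
0=d_\infty\bigl(\tilde\phi_i(\gamma(t_0)),\Id\bigr)\ \ge\ d_i(\gamma(t_0),\Id)-2\epsilon_i\ =\ \tfrac{\delta}{4}-2\epsilon_i\ >\ 0,
\]
a contradiction. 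Hence $A_i$ is injective for all $i$ large and the theorem follows.

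The step I expect to be the main obstacle is the correction of $\phi_i$ to a genuine local homomorphism $\tilde\phi_i$ that still almost preserves distances: the argument of \cite[Proposition 4.1]{harvey} exploits compactness of the groups, so here one must instead run the averaging scheme inside a fixed small neighborhood of $\Id$ in $G$, checking both that it converges and that the near-isometry property of $\phi_i$ passes to $\tilde\phi_i$. Once that is in place, extracting $A_i$ and running the dichotomy powered by \cite{pan-rong} is formal.
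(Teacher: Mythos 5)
Your overall architecture (reduce to a local linear-algebra statement near the identity, then use the quantitative no-small-subgroups input from \cite{pan-rong} to force injectivity of the induced Lie algebra map) is reasonable, and the final injectivity dichotomy you run is essentially sound. But the step you yourself flag as ``the main obstacle'' --- correcting $\phi_i$ on $B_\delta(\Id)\cap G_i$ to a genuine continuous local homomorphism $\tilde\phi_i$ that is still an almost-isometry --- is not a fixable technicality; it is the entire difficulty of the theorem, and the averaging scheme of \cite{grove-karcher-ruh} does not survive localization. That scheme defines $\phi'(g)$ as a center of mass of $\phi(gh)\phi(h)^{-1}$ over $h$ ranging through the \emph{whole} source group against Haar measure, and the multiplicativity gain per iteration depends on the translation invariance of that measure together with a bi-invariant convex geometry on the compact target. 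Here neither $G_i$ nor $G$ is compact; if you restrict the integration to $h\in B_\delta(\Id)\cap G_i$ you lose translation invariance of the domain of integration (for $g$ away from the identity the set of admissible $h$ shifts), which introduces boundary errors of order $\varepsilon$ that do not contract under iteration, so the scheme does not converge to a local homomorphism. There is no off-the-shelf theorem asserting that an almost homomorphism defined only near the identity of a locally compact group, with values in a Lie group, can be corrected to a local homomorphism; producing such a statement in this setting is essentially equivalent to the Gleason-lemma technology that the paper imports from \cite{breuillard-green-tao}. As written, your proof assumes its hardest step.

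The paper's proof deliberately avoids any correction to a homomorphism. After reducing (via Theorem \ref{thm:egh-to-ga} and Lemma \ref{lem:zoom-ii}) to good approximations $\psi_i:G_i\to\mathbb{R}^k$, it uses the escape norm and the Gleason lemmas (Theorem \ref{thm:gleason}) to manufacture a genuine norm on $\mathfrak g_i$ comparable to the escape norm, and then applies an approximate Borsuk--Ulam theorem (Theorem \ref{thm:abu}) to the \emph{uncorrected} map $\vv\mapsto\psi_i(\exp\vv)$ on the unit sphere of $\mathfrak g_i$: approximate continuity, approximate oddness, and the fact that the image avoids a fixed ball around $0$ already contradict $\dime(G_i)>k$. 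If you want to salvage your route, you would need to either prove a local stability theorem for almost homomorphisms into Lie groups in this generality, or replace the correction step by the escape-norm/Gleason machinery --- at which point you have reconstructed the paper's argument.
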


Arguing as in \cite[Corollary 4.2]{harvey}, we obtain the following corollary,
which shows that the symmetry degree is upper semi-continuous under non-collapsed convergence.

\begin{Cor}\label{cor:dimension-semicontinuity}
    Let $(X_i,p_i)$ be a sequence of pointed complete $n$-dimensional Riemannian manifolds 
    satisfying \eqref{eq:non-collapsed}. If 
    \[    ( X_i , p _i) \xrightarrow{pGH} (X, p)    ,           \]
    then 
    \[   \dime (\iso (X)) \geq \limsup _{i \to \infty } \dime (\iso ( X_i)).          \]
\end{Cor}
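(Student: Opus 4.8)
The plan is to follow the argument of \cite[Corollary 4.2]{harvey}: apply Theorem~\ref{thm:main} to the \emph{full} isometry groups $G_i := \iso(X_i)$, after passing to a subsequence along which the equivariant limit of these groups exists.

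First I would fix a subsequence $(X_{i_k},p_{i_k})$ realizing the limit superior, so that $\dime(\iso(X_{i_k})) \to d := \limsup_{i\to\infty}\dime(\iso(X_i))$; it then suffices to prove $\dime(\iso(X))\geq d$. The spaces $X_i$ are proper (being complete Riemannian manifolds), and by the Bishop--Gromov inequality the bounds \eqref{eq:non-collapsed} give uniform local doubling, so the limit $X$ of the given convergence $(X_i,p_i)\xrightarrow{pGH}(X,p)$ is also proper, and the subsequence $(X_{i_k},p_{i_k})$ still converges to $(X,p)$. Since isometries are $1$-Lipschitz, the equivariant Gromov--Hausdorff precompactness theorem for closed subgroups of isometries of a converging sequence of proper spaces (see \cite{fukaya-yamaguchi}) allows me to pass to a further subsequence with
\[ (X_{i_k},\iso(X_{i_k}),p_{i_k}) \xrightarrow{eGH} (X,G,p) \]
for some closed subgroup $G\leq\iso(X)$.

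Now Theorem~\ref{thm:main} applies directly to the triples $(X_{i_k},\iso(X_{i_k}),p_{i_k})$ and yields $\dime(G)\geq\limsup_{k\to\infty}\dime(\iso(X_{i_k}))=d$. Since $G$ is a closed subgroup of the second countable, locally compact group $\iso(X)$, topological dimension is monotone under passing to closed subgroups, so $\dime(\iso(X))\geq\dime(G)\geq d$, which is the desired inequality.

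The only delicate point is the equivariant compactness step: one must verify that the sequence $\iso(X_i)$ does not degenerate in the limit, i.e., that a genuine limiting closed group $G\leq\iso(X)$ is obtained rather than the group action being lost at infinity. This is precisely where uniform properness — guaranteed by the non-collapsing and lower Ricci bounds — is used, supplying the compactness for an Arzelà--Ascoli argument on the $1$-Lipschitz isometries; closedness of $G$ then follows from the construction. Everything else reduces to invoking Theorem~\ref{thm:main}.
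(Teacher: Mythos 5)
Your argument is correct and is essentially the paper's intended proof: pass to a subsequence realizing the $\limsup$, invoke the Fukaya--Yamaguchi equivariant precompactness (Theorem \ref{thm:fy}) with $G_i=\iso(X_{i})$ to obtain a closed limit group $G\leq\iso(X)$, apply Theorem \ref{thm:main}, and use monotonicity of dimension under closed subgroups. The only minor quibble is your closing remark: Theorem \ref{thm:fy} holds for arbitrary proper pointed metric spaces converging in the pointed Gromov--Hausdorff sense, so the non-collapsing and Ricci hypotheses are not what make the equivariant compactness step work --- they enter only through Theorem \ref{thm:main} itself (Lie structure and the NSS property).
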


\subsection{Small subgroups}
Let $(X,p)$ be a pointed proper metric space. We equip  $\iso (X)$ with the metric
\[   d_{p} (g,h) : = \inf _{r >0} \left\{ \frac{1}{r} +  \sup _{x \in B_r(p) }   d(gx,hx)
 \right\} .                         \]
 This metric induces the compact-open topology, is left-invariant, and turns $\iso (X)$ into a proper metric space. 
\begin{Def}\label{def:ss1}
Let $(X_i,p_i)$ be a sequence of pointed proper metric spaces. We say a sequence of groups $H_i \leq \iso (X_i)$ is \emph{small} or consists of  \emph{small subgroups} if for all $r > 0 $ one has
\[    \lim_{i \to \infty} \sup _{h \in H_i} \sup_{x \in B_r(p_i)} d(hx,x) = 0 ,    \]
or equivalently, 
\[   \lim_{i \to \infty} \diam _{d_{p_i}} H_i = 0 . \]  
Given a sequence of groups $G_i \leq \iso (X_i)$, we say it has the \emph{no small subgroup} (NSS) property, or \emph{doesn't  admit small subgroups} if any sequence of small subgroups $H_i \leq G_i$ is eventually trivial.
\end{Def}

If a sequence of pointed Riemannian manifolds $(X_i,p_i)$ satisfies \eqref{eq:non-collapsed}, then the sequence $\iso (X_i)$ has the NSS property, so the following is a more general version of Theorem \ref{thm:main}.

\begin{mainthm}\label{thm:largest-small-metric}
Let $(X_i,p_i)$ be a sequence of pointed proper metric spaces, and $G_i \leq \iso (X_i)$ a sequence of closed subgroups. Assume
\[     (X_i, G_i, p_i) \xrightarrow{eGH} (X,G,p)                         \]
with $G$ a Lie group of dimension $k$. Then there is a sequence of small subgroups $H_i \leq  G_i$ with the property that any sequence of small subgroups $H_i ' \leq G_i$ satisfies $H_i ' \leq H_i$ for $i$ large enough. Moreover, if there is $R>0$ such that 
\begin{equation}\label{eq:boundedly-generated}
     G_i = \langle \{ g \in G_i \vert d(gp_i,p_i) \leq R \}  \rangle     
\end{equation}
for $i$ large enough, then $H_i $ is normal in $G_i$ and:
\begin{itemize}
    \item The sequence $G_i / H_i \leq \iso (X_i / H_i) $ has the NSS property.
    \item For $i$ large enough, $G_i / H_i $ is a Lie group of dimension $\leq k$.
\end{itemize}
\end{mainthm}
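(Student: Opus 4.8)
The plan is to take $H_i$ to be the closed subgroup of $G_i$ generated by all the subgroups of $G_i$ that lie very near the identity. Since $G$ is a Lie group it has no small subgroups, so we may fix $\epsilon>0$ so that the closed ball $\overline{B_{3\epsilon}(\Id)}\cap G$ (in $(\iso(X),d_p)$) contains no nontrivial subgroup of $G$; put $U_i:=\{g\in G_i:d_{p_i}(g,\Id)<\epsilon\}$ and set $H_i:=\overline{\langle\,\{H\le G_i:H\subseteq U_i\}\,\rangle}$. The maximality statement is then immediate: if $H_i'\le G_i$ is any sequence of small subgroups, then $\diam_{d_{p_i}}H_i'<\epsilon$ for $i$ large, and since $\Id\in H_i'$ this forces $H_i'\subseteq U_i$, hence $H_i'\subseteq H_i$. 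So the entire first assertion reduces to showing that $H_i$ is itself small, i.e.\ $\diam_{d_{p_i}}H_i\to 0$.

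I would establish smallness of $H_i$ in two steps. The first is soft: if $H\le G_i$ is a subgroup contained in $U_i$, then every power of every element of $H$ again lies in $U_i$, so if $\sup\{\,d_{p_i}(s,\Id): s\in H\le G_i,\ H\subseteq U_i\,\}$ failed to tend to $0$ we could pick $s_i$ lying in such a subgroup with $d_{p_i}(s_i,\Id)\ge\delta$, pass to a subsequence along which everything converges equivariantly, and obtain $\bar s\in G$, $\bar s\ne\Id$, with $\overline{\langle\bar s\rangle}\subseteq\overline{B_{3\epsilon}(\Id)}\cap G$ — impossible by the choice of $\epsilon$. Hence all the subgroups that generate $H_i$ in fact lie in $B_{\mu_i}(\Id)$ for some $\mu_i\to 0$; equivalently, every generator of $H_i$ displaces the (large) ball $B_{1/\mu_i}(p_i)$ by less than $\mu_i$.

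The second step — and the main obstacle — is to pass from this uniform tininess of the \emph{generators} to smallness of the \emph{group} $H_i$; the difficulty is that a product of many uniformly tiny elements may drift, and the word length needed to represent an element of $H_i$ is not bounded. I would argue by contradiction: if $\diam_{d_{p_i}}H_i\ge\delta$ along a subsequence then, since $H_i$ is $\mu_i$-chain connected to $\Id$ through its generators, a first-escape argument yields $g_i\in H_i$ with $d_{p_i}(g_i,\Id)\in[\delta,2\delta]$ together with a representation $g_i=a^i_1\cdots a^i_{m_i}$ all of whose partial products lie in the compact set $K_i:=\overline{B_{2\delta}(\Id)}\cap G_i$. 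Because $G$ is a Lie group, $\overline{B_{2\delta}(\Id)}\cap G$ is finite dimensional and hence has finite $\rho$-capacity for every $\rho>0$, and by the equivariant convergence the same capacity bound holds for $K_i$ once $i$ is large. A pigeonhole on the partial products, iterated with care, then rewrites $g_i$ as a \emph{boundedly long} alternating product of factors that are either (i) subwords of uniformly bounded length in the tiny generators, or (ii) arbitrarily small ``error'' terms. Passing to the limit kills the error terms, while each factor of type (i) becomes a finite product of elements of $G$ each generating a subgroup of $\overline{B_{3\epsilon}(\Id)}\cap G$, hence trivial; so $g_i\to\Id$, contradicting $d_{p_i}(g_i,\Id)\ge\delta$. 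Arranging the iteration so that the number of factors stays bounded — rather than comparable to $m_i$ — is the delicate point, and is exactly where the finite dimensionality of $G$, i.e.\ the hypothesis that $G$ is Lie, is used quantitatively.

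Granting \eqref{eq:boundedly-generated}, the remaining assertions follow formally. If $g_i\in G_i$ satisfies $d(g_ip_i,p_i)\le R$, then $g_iH_ig_i^{-1}$ is again a subgroup and, for each fixed $r$, substituting $y=g_i^{-1}x$ and using $d(g_i^{-1}p_i,p_i)=d(p_i,g_ip_i)\le R$,
\[
\sup_{h\in H_i}\ \sup_{x\in B_r(p_i)}d(g_ihg_i^{-1}x,x)=\sup_{h\in H_i}\ \sup_{y\in B_r(g_i^{-1}p_i)}d(hy,y)\le\sup_{h\in H_i}\ \sup_{y\in B_{r+R}(p_i)}d(hy,y)\longrightarrow 0,
\]
so $\{g_iH_ig_i^{-1}\}_i$ is a sequence of small subgroups; by maximality $g_iH_ig_i^{-1}\subseteq H_i$ for $i$ large, and applying this to $g_i^{-1}$ as well gives $g_iH_ig_i^{-1}=H_i$. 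Since by \eqref{eq:boundedly-generated} such $g_i$ generate $G_i$, and a failure of normality for infinitely many $i$ would produce a sequence contradicting the previous sentence, $H_i\trianglelefteq G_i$ for $i$ large. For the NSS property of $G_i/H_i$: if $\bar H_i'\le G_i/H_i$ is a sequence of small subgroups, its preimage $H_i':=\pi_i^{-1}(\bar H_i')\supseteq H_i$ satisfies, by the definition of the quotient metric together with smallness of $H_i$ and of $\bar H_i'$, $\sup_{h'\in H_i'}\sup_{x\in B_r(p_i)}d(h'x,x)\to 0$ for each fixed $r$; hence $\{H_i'\}$ is a sequence of small subgroups, so $H_i'=H_i$ for $i$ large and $\bar H_i'$ is eventually trivial. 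Finally, quotienting by a sequence of small subgroups does not change equivariant limits, so $(X_i/H_i,G_i/H_i,\bar p_i)\xrightarrow{eGH}(X,G,p)$; each $G_i/H_i$ is locally compact, and the NSS property of the sequence forces each $G_i/H_i$ itself to have no small subgroups for $i$ large (otherwise a nontrivial subgroup of diameter $<1/i$ inside $G_i/H_i$, for each $i$ where one exists, gives a forbidden sequence), so Gleason--Yamabe shows $G_i/H_i$ is a Lie group for such $i$; and a local version of Harvey's argument (via \cite{grove-karcher-ruh}, which only needs the NSS property of the sequence) produces injective continuous homomorphisms of a neighborhood of $\Id$ in $G_i/H_i$ into $G$, whence $\dim(G_i/H_i)\le\dim G=k$.
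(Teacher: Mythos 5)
Your construction of $H_i$ (the subgroup generated by everything trapped near the identity) and your arguments for maximality, normality under \eqref{eq:boundedly-generated}, and the NSS property of $G_i/H_i$ are sound and essentially parallel the paper's. But there are two genuine gaps, and they sit exactly where the real content of the theorem lies. The first is the smallness of $H_i$. You correctly isolate the obstacle --- passing from uniformly tiny generators to a small group --- but the ``pigeonhole on partial products, iterated with care'' is not an argument. Pigeonhole on the $\rho$-separated partial products only produces subwords that displace $p_i$ by little; excising such a subword changes the word, the recursion has no visible termination with a \emph{bounded} number of factors, and since $d_{p_i}$ is only left-invariant you have no control over how conjugation by the drifting partial products distorts the tiny factors you are trying to collect. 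Overcoming precisely this is the content of the Gleason lemmas of Breuillard--Green--Tao (Theorem \ref{thm:gleason} in the paper): the escape norm $\Vert\cdot\Vert_{B_i}$ is subadditive up to a uniform constant $C_0$, so the set of elements of zero escape norm is a subgroup, is small, and absorbs every sequence of small subgroups (Theorem \ref{thm:largest-small-ii}). There is no soft replacement known; the paper states explicitly that this part is a metric reformulation of \cite[Proposition 9.2]{breuillard-green-tao}.

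The second gap is the bound $\dim(G_i/H_i)\leq k$. You appeal to ``a local version of Harvey's argument via \cite{grove-karcher-ruh},'' but Grove--Karcher--Ruh is a stability theorem for almost homomorphisms between \emph{compact} Lie groups, and its use is exactly why Harvey's result requires uniformly bounded orbits --- the situation this theorem is designed to go beyond. Here $G_i/H_i$ need not be compact, the approximating maps are almost homomorphisms only on bounded pieces, and no off-the-shelf construction upgrades them to (even local) injective continuous homomorphisms. This inequality is Theorem \ref{thm:model}, the main technical result of the paper: its proof builds a genuine norm on the Lie algebra of $G_i$ out of the escape norm (again via Theorem \ref{thm:gleason}) and then applies an approximate Borsuk--Ulam theorem (Theorem \ref{thm:abu}). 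In short, the two steps you leave as sketches are the two theorems the paper actually has to prove.
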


The first part of Theorem \ref{thm:largest-small-metric} is mostly a metric reformulation of \cite[Proposition 9.2]{breuillard-green-tao}, so the main new contribution is the bound on the dimensions of the groups $G_i / H_i$.

\begin{Rem}\label{rem:rcd}
  The hypothesis in Theorem \ref{thm:main} that the spaces $X_i$ are Riemannian manifolds satisfying \eqref{eq:non-collapsed} is only used to guarantee that $G$ is a Lie group and that the sequence $G_i$ has the NSS property. 
  Hence said hypothesis can be replaced with the weaker hypothesis that the metric spaces $X_i$ and $X$ admit measures making them $\rcd (K,N)$ spaces of the same essential dimension for some $K \in \mathbb{R}$, $N \geq 1$ 
   (see \cite{guijarro-santos, sosa} and \cite[Theorem 93]{santos-zamora}).  
\end{Rem}

\subsection{Good approximations} In most of this paper, we study the almost homomorphisms provided by the definition of equivariant Gromov--Hausdorff convergence 
without explicitly making reference to the metric spaces they act on. The ensuing definition is inspired by the finite approximations introduced in \cite{turing} 
and the good models introduced in \cite{hrushovski} (see also \cite[Definition 57]{santos-zamora}).

\begin{Not}
For a subset $A$ of a  group  $G$, we denote by $A^n$ the set of elements $a_1 \cdots a_n \in G$ with $a_j \in A$ for each $j$. We say that $A$ is \emph{symmetric} if it contains the identity and is closed under inverses.   
\end{Not}

\begin{Def}\label{def:ga}
    Let $G_i$ be a sequence of locally compact Hausdorff groups and $G$ a locally compact Hausdorff group. We say a sequence of functions $\phi _ i : G_i \to G$ consists of \emph{good approximations} if there are symmetric, open, pre-compact sets $A_i \subset G_i$, $A \subset G$ such that:
    \begin{enumerate}[label=\Roman*]
        \item (Almost surjectivity) For all $U \subset A $ open nonempty, one has $\phi_i (A_i) \cap U \neq \emptyset$ for $i$ large enough.\label{item:ga-1}
        \item (No expansion) For all $V \subset G$ open with $\overline{A} \subset V$, one has $\phi_i (A_i) \subset V$ for $i$ large enough.\label{item:ga-2}
        \item (Almost homomorphism) For each $n \in \mathbb{N}$, and sequences  $g_i, h_i \in A_i ^n $, one has
        \[  \lim_{i \to \infty}  \phi_i (g_ih_i)^{-1} \phi _i (g_i) \phi_i (h_i) = e_G   . \]\label{item:ga-3}
        \item (No compression) For each $n \in \mathbb{N}$ and each compact $K\subset A$, 
        one has 
        \[  \phi_i ^{-1} (K) \cap A_i ^n \subset A_i        \]
        for $i$ large enough. \label{item:ga-4}
        \item (Almost continuity) For all $U \subset G$ identity neighborhood there is a sequence $U_i \subset G_i$ of identity neighborhoods such that $\phi_i (U_i) \subset U$ for $i$ large enough. \label{item:ga-5}
    \end{enumerate}
    If properties (I--V) hold, we call the sets $A_i$ and $ A$ \emph{regular neighborhoods} with respect to the approximations $\phi_i$. 
\end{Def}

\begin{Def}\label{def:ss2}
    Let $\phi_i : G_i \to G$ be a sequence of good approximations with regular neighborhoods $A_i \subset G_i$. We say a sequence of subgroups $H_i \leq G_i$ is \emph{small} or \emph{consists of small subgroups} with respect to the pairs $(\phi_i, A_i)$  if $H_i \subset A_i$ for $i$ large enough, and 
    \[  \phi_i (h_i) \to e \text{ for any sequence }h_i \in H_i . \]
    We say the sequence $G_i$  has the \emph{no small subgroup} (NSS) property or \emph{doesn't admit small subgroups} with respect to the pairs $(\phi_i, A_i)$ if any sequence of small subgroups $H_i \leq G_i$ with respect to the pairs $(\phi_i, A_i)$ is eventually trivial.
\end{Def}

\begin{Rem}
When clear from the context, we will omit the dependence on the pairs $(\phi_i, A_i)$ when talking about sequences of small subgroups.    
\end{Rem}

The following theorem confirms that equivariant Gromov--Hausdorff convergence produces good approximations, and that the different notions of small subgroups agree.

\begin{mainthm}\label{thm:egh-to-ga}
    Let $(X_i, p_i)$, $(X,p)$ be pointed proper metric spaces and $G_i \leq \iso (X_i)$, $G \leq \iso (X)$ closed groups of isometries such that 
    \[     (X_i, G_i, p_i) \xrightarrow{eGH} (X,G,p) .  \]
    Then the maps $\phi_i : G_i \to G$ given by the definition of equivariant Gromov--Hausdorff convergence are good approximations.  Moreover, a sequence of subgroups $H_i \leq G_i$ is small as in Definition \ref{def:ss1} if and only if it is small as in Definition \ref{def:ss2}. 
\end{mainthm}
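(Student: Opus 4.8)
The plan is to unwind the definition of equivariant Gromov--Hausdorff convergence and check the five axioms of Definition \ref{def:ga} one at a time, then verify the equivalence of the two notions of smallness. Recall that $(X_i,G_i,p_i)\xrightarrow{eGH}(X,G,p)$ means there are $\varepsilon_i\to 0$ and (not necessarily continuous) maps $f_i:B_{1/\varepsilon_i}(p_i)\to X$, $g_i:B_{1/\varepsilon_i}(p)\to X_i$ together with maps $\phi_i:G_i\to G$, $\psi_i:G\to G_i$ that are $\varepsilon_i$-approximations in the usual sense: $f_i$ and $g_i$ are almost-isometric, almost-surjective, almost-inverse to each other, and they almost-intertwine the actions, i.e. $d(f_i(hx),\phi_i(h)f_i(x))<\varepsilon_i$ for $h\in G_i$ and $x$ in the relevant ball, and similarly for $\psi_i$. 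The natural candidate for the regular neighborhoods is $A_i:=\{h\in G_i: d(hp_i,p_i)<R\}$ and $A:=\{g\in G: d(gp,p)<R\}$ for a fixed $R>0$; these are symmetric (as $d(h^{-1}p_i,p_i)=d(p_i,hp_i)$), open, and pre-compact because $\iso(X_i)$, $\iso(X)$ are proper metric spaces with the metric $d_{p_i}$, $d_p$ and $A_i$, $A$ are bounded in that metric.

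First I would verify \ref{item:ga-1} and \ref{item:ga-2}: these say $\phi_i(A_i)$ is asymptotically dense in $A$ and asymptotically contained in any neighborhood of $\overline A$. Both follow from the almost-equivariance estimate $d(\phi_i(h)p,p)\approx d(f_i(hp_i),f_i(p_i))\approx d(hp_i,p_i)$ (using that $f_i$ is almost-isometric and $f_i(p_i)\approx p$), which shows $\phi_i$ almost-preserves the displacement-at-$p$ function; surjectivity up to $\varepsilon_i$ is handled by feeding $g_i$ into the analogous estimate for $\psi_i$ and using that $\phi_i\psi_i\approx\Id$. Next, \ref{item:ga-3} is the almost-homomorphism property: for $g_i,h_i\in A_i^n$ one has $d(g_ih_ip_i,p_i)\le 2nR$, so $g_i,h_i,g_ih_i$ all lie in a fixed ball, and iterating the intertwining estimate along the word gives $d(\phi_i(g_ih_i)x,\phi_i(g_i)\phi_i(h_i)x)\to 0$ uniformly on bounded sets, which is exactly convergence to $e_G$ in the metric $d_p$; here one must be mildly careful that the ball $B_{1/\varepsilon_i}(p_i)$ on which $f_i$ is defined eventually contains all the relevant points, which is fine since $2nR$ is fixed. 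For \ref{item:ga-4}, if $g\in A_i^n$ and $\phi_i(g)\in K$ for $K\subset A$ compact, then $d(\phi_i(g)p,p)$ is bounded away from $R$ by some $\delta>0$ depending only on $K$; pulling back through the almost-isometry gives $d(gp_i,p_i)<R-\delta+\varepsilon_i<R$ for $i$ large, so $g\in A_i$. Finally \ref{item:ga-5} follows by taking $U_i:=\phi_i^{-1}(U')\cap A_i$ for a slightly shrunk open set $U'$ with $K\subset U'\subset\overline{U'}\subset U$, or more simply by defining $U_i$ via the displacement function and invoking the uniform closeness of $d(\phi_i(\cdot)p,p)$ to $d(\cdot\, p_i,p_i)$.

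The remaining task is the equivalence of the two smallness notions for a sequence $H_i\le G_i$. If $H_i$ is small as in Definition \ref{def:ss1}, then $\diam_{d_{p_i}}H_i\to 0$, so in particular $d(hp_i,p_i)\to 0$ uniformly over $h\in H_i$, giving $H_i\subset A_i$ eventually, and then $d(\phi_i(h)p,p)\approx d(hp_i,p_i)\to 0$ uniformly; combined with the almost-homomorphism property and properness of $\iso(X)$ one upgrades this from ``displacement at $p$ goes to zero'' to $\phi_i(h)\to e$ in $d_p$ uniformly over $h\in H_i$ (this is essentially the standard fact that a subgroup of a Lie group whose elements move $p$ little must move everything little, or one argues directly: $\phi_i(h^k)\approx\phi_i(h)^k$ moves $p$ by about $k$ times as much for a while, forcing the displacement of $\phi_i(h)$ near $p$ to be uniformly small on larger balls). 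Conversely, if $H_i$ is small as in Definition \ref{def:ss2}, then $\phi_i(h)\to e$ uniformly, so $d(\phi_i(h)p,p)\to 0$ uniformly, hence $d(hp_i,p_i)\lesssim d(\phi_i(h)p,p)+\varepsilon_i\to 0$ uniformly; to get the full $\diam_{d_{p_i}}H_i\to 0$, i.e. that $h$ moves points in any fixed ball $B_r(p_i)$ uniformly little, one uses that $\psi_i(\phi_i(h))\approx h$ together with the fact that $\psi_i(e)\approx e_{G_i}$ and $\psi_i$ is uniformly continuous on compact sets, so $\psi_i$ maps the shrinking neighborhoods of $e$ containing $\phi_i(H_i)$ into shrinking neighborhoods of $e_{G_i}$, yielding $\diam_{d_{p_i}}H_i\to 0$.

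The main obstacle I anticipate is \ref{item:ga-3} and the two ``upgrade'' steps in the last paragraph, since these are where one passes from control of an isometry near the single point $p$ (or $p_i$) to control on a whole ball: one must iterate the almost-equivariance estimate along words of bounded length while keeping track that the $\varepsilon_i$-errors accumulate only linearly in the word length (which is fixed) and that all intermediate points stay inside the domain $B_{1/\varepsilon_i}(p_i)$ of the approximations. The rest is bookkeeping: choosing $R$ once and for all, passing to subsequences is unnecessary since the statements are all of the form ``for $i$ large enough,'' and using properness of the isometry groups to convert displacement bounds at the base point into $d_p$-smallness.
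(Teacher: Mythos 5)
Your overall strategy matches the paper's, and most of the axioms (\ref{item:ga-1}, \ref{item:ga-3}, \ref{item:ga-4}, \ref{item:ga-5}) and the smallness equivalence do go through essentially as you describe (the equivalence is cleanest via Proposition \ref{pro:egh-is-pgh}, which transfers $d_{p_i}$ to $d_p$ directly for sequences with bounded displacement, rather than via the power/upgrade arguments you sketch). But there is a genuine gap at the very first step: the radius of the regular neighborhoods cannot be an arbitrary fixed $R>0$. The problem is condition \ref{item:ga-2}. Your estimate $d(\phi_i(h)p,p)\approx d(hp_i,p_i)$ only shows that $\phi_i(A_i)$ is eventually contained in any neighborhood of the sublevel set $\theta(R):=\{g\in G: d(gp,p)\le R\}$, and in general $\theta(R)$ is strictly larger than $\overline{A}=\overline{\{g: d(gp,p)<R\}}$: there may be isometries of $X$ with displacement exactly $R$ at $p$ that are not limits of isometries of smaller displacement. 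Concretely, take $X_i=(1-1/i)\mathbb{Z}$ and $X=\mathbb{Z}$ with the translation groups and $R=1$. Then $A=\{e\}=\overline{A}$ is open in the (discrete) group $G$, while the generating translation $g_i$ of $G_i$ has $d(g_ip_i,p_i)=1-1/i<1$, so $g_i\in A_i$, yet $\phi_i(g_i)$ converges to the unit translation of $\mathbb{Z}$, which lies outside every sufficiently small neighborhood $V$ of $\overline{A}$. So \ref{item:ga-2} fails for this choice of $R$.

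The paper's proof repairs exactly this: it considers the monotone map $\theta:[0,\infty)\to\mathfrak{M}(G)$, $\theta(r)=\{g: d(gp,p)\le r\}$, invokes Lemma \ref{lem:theta-continuity} (via Blaschke compactness of the hyperspace) to find a radius $r_0$ at which $\theta$ is continuous in the Hausdorff metric, and takes $A=\{g: d(gp,p)<r_0\}$. Continuity at $r_0$ guarantees $\theta(r_0)=\overline{A}$ (every $g$ with $d(gp,p)\le r_0$ is approximated by elements of $\theta(r_0-\delta)\subset A$), which is precisely what makes your argument for \ref{item:ga-2} close up. Without this selection of the radius the statement as you set it up is false, so this is a missing idea rather than a bookkeeping issue; everything downstream of it in your proposal is salvageable once $R$ is replaced by such an $r_0$.
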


Theorem \ref{thm:egh-to-ga} allows one to reduce Theorems \ref{thm:main} and \ref{thm:largest-small-metric} to Theorems \ref{thm:model} and \ref{thm:largest-small} below, respectively.

\begin{Thm}\label{thm:model}
    Let $G_i$ be a sequence of Lie groups and  $\phi _i : G_i \to G$ a sequence of good approximations. If $G$ is a Lie group and  the groups $G_i$ have the NSS property, then
    \[    \text{dim} (G) \geq \limsup_{i \to \infty} \text{dim} (G_i) .    \] 
\end{Thm}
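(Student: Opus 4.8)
The plan is to argue by contradiction and extract, from a failure of the dimension inequality, a sequence of small subgroups of $G_i$ that are nontrivial, contradicting the NSS hypothesis. So suppose, after passing to a subsequence, that $\dim(G_i) = m > k = \dim(G)$ for all $i$. The idea is that the good approximations $\phi_i$ restricted to the regular neighborhoods $A_i$ behave, in the limit, like ``almost homomorphisms'' into $G$, and the image $\phi_i(A_i)$ fills up a set essentially equal to $\overline{A}$ by properties \ref{item:ga-1} and \ref{item:ga-2}; yet the domain $A_i$ sits inside an $m$-dimensional manifold while the target sits inside a $k$-dimensional one with $k < m$. The obstacle is that $\phi_i$ need not be continuous, injective, or even approximately so on the nose, so one cannot directly invoke invariance of domain; the NSS property is precisely what must be leveraged to upgrade the dimension count.

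First I would set up local coordinates: fix, for $i$ large, a small closed ball $\overline{B}$ around $e_{G_i}$ contained in $A_i$ on which the exponential map of $G_i$ (or a fixed chart) identifies a neighborhood of $e$ with a neighborhood of $0$ in $\mathbb{R}^m$, with uniform control on the group operations (this uses that Lie groups of a fixed dimension are locally uniformly ``nice,'' but actually one does not need a uniform $m$ — one can work with a fixed $i$ at a time once $m$ is pinned down, then let $i \to \infty$). Next, using property \ref{item:ga-3} (approximate multiplicativity on powers $A_i^n$) together with \ref{item:ga-4} and \ref{item:ga-5}, I would show that the ``kernel-like'' sets $\phi_i^{-1}(W) \cap A_i$ for small neighborhoods $W$ of $e_G$ are, for $i$ large, large enough to contain a nontrivial subgroup: concretely, one follows the Gleason--Yamabe / Breuillard--Green--Tao style argument (the first part of Theorem \ref{thm:largest-small-metric} already packages the existence of a maximal small subgroup $H_i$), observing that if $\dim(G_i) > \dim(G)$ then the preimage of a point must have positive-dimensional ``thickness,'' forcing $H_i$ to be nontrivial. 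More carefully: by \ref{item:ga-4}, for each $n$ the set $\phi_i^{-1}(K)\cap A_i^n$ stays inside $A_i$; feeding in $K = \{e_G\}$ and combining with \ref{item:ga-3}, the set $N_i := \bigcap_n \overline{\{g \in A_i : \phi_i(g_1\cdots g_j) \text{ close to } e_G, \ j \le n\}}$ contains a nontrivial subgroup whenever the ``measure'' or ``dimension defect'' is positive, which is guaranteed by $m > k$ via a covering/volume comparison in the two charts.

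The cleanest route, which I would take, is: (i) invoke the first part of Theorem \ref{thm:largest-small-metric}'s proof mechanism to produce the maximal small subgroup sequence $H_i \le G_i$ with respect to $(\phi_i, A_i)$; (ii) show $G_i/H_i$ carries good approximations $\bar\phi_i : G_i/H_i \to G$ that now have the NSS property and satisfy an injectivity-in-the-limit statement, so that by a limiting invariance-of-domain argument (the maps $\bar\phi_i$ become genuine continuous injective homomorphisms on small neighborhoods for $i$ large, exactly as in \cite[Proposition 4.1]{harvey}) one gets $\dim(G_i/H_i) \le \dim(G) = k$; (iii) conclude $\dim(H_i) = \dim(G_i) - \dim(G_i/H_i) \ge \dim(G_i) - k \ge m - k \ge 1$, so $H_i$ is a nontrivial (indeed positive-dimensional) small subgroup, contradicting the NSS hypothesis on $G_i$. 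Hence $\dim(G_i) \le k$ for $i$ large, i.e. $\limsup_i \dim(G_i) \le \dim(G)$.

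The main obstacle is step (ii): proving that $\bar\phi_i$ eventually restricts to a continuous injective homomorphism on a fixed neighborhood. This requires showing that the approximate-homomorphism property \ref{item:ga-3} plus the NSS property of $G_i/H_i$ force the oscillation of $\bar\phi_i$ to vanish uniformly on compact sets (continuity in the limit), and that \ref{item:ga-4} with the NSS property prevents collapsing (injectivity in the limit); then one passes to a limit map $\bar\phi_\infty$ which is a continuous injective homomorphism from a neighborhood of $e$ in a Lie group of dimension $\liminf \dim(G_i/H_i)$ into $G$, and invariance of domain gives the dimension bound. Packaging the ``limit of Lie groups of bounded dimension with uniform local structure'' requires a compactness statement for local Lie groups (à la the solution of Hilbert's fifth problem), which I would cite rather than reprove. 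Everything else — the chart comparisons, the manipulation of properties \ref{item:ga-1}--\ref{item:ga-5} — is routine bookkeeping.
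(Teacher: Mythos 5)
There is a genuine gap, and it sits exactly at the point you identify as ``the main obstacle,'' namely step (ii). You assert that properties \ref{item:ga-3}--\ref{item:ga-5} together with the NSS property force $\bar\phi_i$ to become, for large $i$, a genuine continuous injective homomorphism on a fixed neighborhood of the identity, ``exactly as in \cite[Proposition 4.1]{harvey},'' after which invariance of domain gives $\dim(G_i/H_i)\le k$. But Harvey's upgrade from almost-homomorphism to homomorphism rests on Grove--Karcher--Ruh and requires the groups to be compact (equivalently, uniformly bounded orbits); removing that hypothesis is precisely the content of the theorem you are trying to prove. Without compactness there is no known way to replace $\phi_i$ by an actual continuous injective local homomorphism, and the ``limit map $\bar\phi_\infty$'' you invoke is not available: the domains $G_i$ vary with $i$, $\phi_i$ is only $\varepsilon_i$-continuous with $\varepsilon_i\to 0$, and even granting a limit local group, injectivity of the limit map is equivalent to ruling out a positive-dimensional collapsed direction --- which is the whole difficulty, not a consequence of \ref{item:ga-4} plus NSS. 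Note also that since $G_i/H_i$ has the NSS property and receives good approximations to $G$, your step (ii) is literally Theorem \ref{thm:model} applied to the sequence $G_i/H_i$; the argument is circular. (Your step (iii) is also a detour: NSS of $G_i$ already forces $H_i$ to be eventually trivial, so the quotient adds nothing.)

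The paper's proof replaces invariance of domain by a genuinely different mechanism. It first uses Lemma \ref{lem:zoom-ii} to blow up $\phi_i$ into good approximations $\psi_i:G_i\to\mathbb{R}^k$, then uses the escape norm and the Gleason lemmas of Breuillard--Green--Tao (Theorem \ref{thm:gleason}) to build, on the Lie algebra $\mathfrak{g}_i$, a quasi-norm $\vert\cdot\vert_i$ (Lemmas \ref{lem:delta-small}--\ref{lem:quasi-norm}) and then a genuine norm $\Vert\cdot\Vert_i$ comparable to it (Lemma \ref{lem:norm}); the NSS hypothesis enters only to guarantee $\Vert g\Vert_{B_i}>0$ for $g\neq e$, i.e.\ that the quasi-norm is nondegenerate. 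The map $f_i(\vv)=\psi_i(\exp\vv)$ on the unit sphere $\mathbb{S}^{n_i-1}\subset\mathfrak{g}_i$ is then shown to be $\varepsilon$-continuous, odd, and to avoid a fixed ball around $0$, which contradicts the approximate Borsuk--Ulam theorem (Theorem \ref{thm:abu}) unless $n_i\le k$. If you want to repair your write-up, the place to invest is the construction of the norm on $\mathfrak{g}_i$ and the topological obstruction that substitutes for invariance of domain; the quotient-by-$H_i$ scaffolding can be discarded.
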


\begin{Thm}\label{thm:largest-small}
    Let  $\phi _i : G_i \to G$ be a sequence of good approximations with regular neighborhoods $A_i \subset G_i$ and $A \subset G$. If $G$ is a Lie group, then there is a sequence of subgroups $H_i  \leq G_i $ such that:
    \begin{itemize}
        \item The sequence $H_i$ is small.
        \item $H_i \trianglelefteq \langle A_i \rangle $ for $i$ large enough.
        \item For any sequence $H_i ' \leq G_i$ of small subgroups, one has $H_i ' \leq H_i$ for $i$ large enough. 
        \item $G_i ' : = \langle A_i \rangle / H_i$ is a Lie group for $i$ large enough.
    \end{itemize}
\end{Thm}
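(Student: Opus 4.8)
The plan is to pass to an ultralimit, recognize the resulting object as a \emph{good model} in the sense of \cite{hrushovski, breuillard-green-tao}, and then feed it into the structure theory of ultra approximate groups --- of which \cite[Proposition 9.2]{breuillard-green-tao} is the pertinent instance --- to extract the subgroups $H_i$. Fix a non-principal ultrafilter $\omega$ on $\mathbb{N}$, set $\mathbf{A}:=\prod_\omega A_i\subseteq\prod_\omega G_i$ and $\mathbf{G}:=\langle\mathbf{A}\rangle=\bigcup_{n}\mathbf{A}^n$; this internal group models $\langle A_i\rangle$. (Replacing $A_i,A$ by $A_i^2,A^2$ if necessary, we may assume $e_{G_i}\in A_i$ and $e_G\in A$.) By property \ref{item:ga-2} and an induction on $n$ using \ref{item:ga-3}, each $\phi_i(A_i^n)$ is eventually contained in every neighborhood of the compact set $\overline{A}^n\subseteq G$, so $\Phi\bigl([(g_i)]\bigr):=\lim_\omega\phi_i(g_i)$ defines a map $\mathbf{G}\to G$, which is a homomorphism by \ref{item:ga-3}. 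Property \ref{item:ga-1} and a diagonal argument along $\omega$ show that $\Phi(\mathbf{A})$ is dense in $\overline{A}$, hence $\Phi(\mathbf{G})=\langle A\rangle\leq G$, an open --- so Lie --- subgroup; and \ref{item:ga-4} gives $\Phi^{-1}(V)\cap\mathbf{G}\subseteq\mathbf{A}$ whenever $\overline{V}\subseteq A$ is compact, in particular $\ker\Phi\subseteq\mathbf{A}$. Thus $\Phi$ is a good model of $\mathbf{A}$; checking this (this is where the notion of regular neighborhood, and upstream Theorem \ref{thm:egh-to-ga}, are used) is the translation step.

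Since $G$ is a Lie group it has no small subgroups, so fix a symmetric open $\Omega\ni e_G$ with $\overline{\Omega}\subseteq A$ compact and containing no nontrivial subgroup of $G$, and put $B_i:=\phi_i^{-1}(\Omega)\cap A_i$. The Gleason--Yamabe analysis underlying \cite[Proposition 9.2]{breuillard-green-tao} (see also \cite{santos-zamora}), carried out inside each $G_i$ with all errors controlled by \ref{item:ga-3}, then produces subgroups $H_i\leq G_i$ with $H_i\subseteq B_i$, depending only on $(G_i,\phi_i,A_i)$ and $(G,\Omega)$ --- hence not on $\omega$ --- with the properties that $\prod_\omega H_i$ is an internal normal subgroup of $\mathbf{G}$ containing every internal subgroup of $\mathbf{G}$ with trivial $\Phi$-image, and that $\langle A_i\rangle/H_i$ carries, for $i$ large, a locally compact metric group structure with no small subgroups, hence --- by the Gleason--Yamabe theorem --- is a Lie group. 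Smallness of $(H_i)$ is then immediate: for $h_i\in H_i$ and any $k$, property \ref{item:ga-3} gives $\phi_i(h_i),\phi_i(h_i)^2,\dots,\phi_i(h_i)^k$ inside a fixed slightly-enlarged no-small-subgroup neighborhood of $e_G$ once $i$ is large, so Gleason's escape estimate in $G$ forces $\phi_i(h_i)\to e_G$.

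It remains to transfer the remaining clauses to the sequence $(H_i)$, not merely to the internal group $\prod_\omega H_i$. Normality of $\prod_\omega H_i$ in $\mathbf{G}$ is a first-order property of $(G_i,A_i,H_i)$, so by {\L}o\'s's theorem $A_i$ normalizes $H_i$ for $\omega$-almost every $i$; as the normalizer of $H_i$ is a subgroup of $G_i$ containing $A_i$, it contains $\langle A_i\rangle$, so $H_i\trianglelefteq\langle A_i\rangle$ for $\omega$-almost every $i$, and --- the $H_i$ being independent of $\omega$ --- for all sufficiently large $i$, since otherwise an ultrafilter concentrated on the exceptional set, used in place of $\omega$, would contradict this. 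Finally, if $(H_i')$ is any sequence of small subgroups, then eventually $H_i'\subseteq A_i$ and $\phi_i(H_i')\subseteq\Omega$, so $\prod_\omega H_i'$ is an internal subgroup of $\mathbf{G}$ whose $\Phi$-image is a subgroup of $G$ contained in $\overline{\Omega}$, hence trivial; therefore $\prod_\omega H_i'\subseteq\prod_\omega H_i$, so $H_i'\subseteq H_i$ for $\omega$-almost every $i$, and once more for all large $i$ by $\omega$-independence.

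The routine parts are the construction of the good model $\Phi$ and the ultraproduct transfers. The main obstacle is the content imported from \cite[Proposition 9.2]{breuillard-green-tao}: the Gleason--Yamabe escape-norm analysis that simultaneously produces the $H_i$ and the Lie structure on $\langle A_i\rangle/H_i$, run inside the groups $G_i$ with all errors controlled by the good-approximation properties \ref{item:ga-1}--\ref{item:ga-5} of $\phi_i$, together with the point --- slightly beyond that proposition's literal statement --- that this construction can be organized canonically in $i$, so that the resulting $H_i$ dominate \emph{every} competing sequence of small subgroups and the ``for $i$ large enough'' clauses hold verbatim.
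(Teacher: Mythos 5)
Your overall strategy is the same as the paper's: both are reworkings of \cite[Proposition 9.2]{breuillard-green-tao}, and the paper itself only uses the ultraproduct/good-model formalism once, inside the proof of its Theorem \ref{thm:gleason}, to verify that the localized sets are strong approximate groups so that \cite[Theorem 8.1]{breuillard-green-tao} applies. The difference is that the paper then leaves the ultraproduct behind: it takes the Gleason subadditivity estimate $\Vert g_1\cdots g_m\Vert_{B_i}\leq C_0\sum_j\Vert g_j\Vert_{B_i}$ as the only imported black box, defines $H_i:=\{g\in G_i\mid \Vert g\Vert_{B_i}=0\}$ explicitly, and verifies each bullet (smallness, normalization by $A_i$, maximality among small sequences, and the no-small-subgroups property of $B_iH_i$ in the quotient, whence Lie by Gleason--Yamabe) by direct escape-norm arguments using axioms \ref{item:ga-2}--\ref{item:ga-4}.

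The genuine gap in your write-up is that the subgroups $H_i$ are never defined: you assert that ``the Gleason--Yamabe analysis underlying \cite[Proposition 9.2]{breuillard-green-tao}, carried out inside each $G_i$,'' produces them with all the required properties, including canonicity in $i$. But that proposition, as stated, produces a single internal normal subgroup of the ultraproduct $\mathbf{G}$, determined only up to $\omega$-almost-everywhere equivalence; your entire transfer back to the sequence (normality of $H_i$ in $\langle A_i\rangle$, domination of every competing small sequence, and the upgrade from ``$\omega$-a.e.\ $i$'' to ``all large $i$'' by varying the ultrafilter) rests on the $H_i$ being defined independently of $\omega$, which is exactly the point you assert without supplying. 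Once you write down the explicit definition $H_i=\{g\mid\Vert g\Vert_{B_i}=0\}$, the ultrafilter machinery becomes unnecessary and the four bullets follow sequentially as in the paper's Theorem \ref{thm:largest-small-ii}. A secondary issue: your $B_i:=\phi_i^{-1}(\Omega)\cap A_i$ need not be regular neighborhoods for $\phi_i$ (they need not even be open, and property \ref{item:ga-5} is what guarantees suitable open approximants); the paper uses Proposition \ref{pro:zoom-i} to produce localized sets $B_i$ for which the good-approximation axioms, and hence the Gleason estimate, actually hold.
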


\subsection{Outline}
Theorem \ref{thm:model} is the main technical result of this paper. 
The first step in the proof is to construct 
a sequence of good approximations $\psi _i : G_i \to \mathbb{R}^k$ 
with the NSS property, where $k : = \dime (G)$. 
This is achieved by Lemma \ref{lem:zoom-ii}. 
Arguing by contradiction, we can assume 
$n_i : = \dime (G_i) > k$ for all $i$. 
Given a norm $\Vert \cdot \Vert _i$ on $\mathfrak{g}_i$, the Lie algebra of $G_i$, 
we can identify $\mathbb{S}^{n_i-1}$ with the unit vectors of $\mathfrak{g}_i$.
If the norm is chosen carefully, the map 
$f_i : \mathbb{S}^{n_i -1} \to \mathbb{R}^k$ 
given by $f_i (\vv) : = \psi _i (\exp (\vv))$ is approximately continuous, 
almost respects antipodals, and its image stays away from $0$. 
This violates an approximate version of the Borsuk--Ulam Theorem (Theorem \ref{thm:abu}).
The main difficulty is the construction of the norm $\Vert \cdot \Vert _i$, 
which relies on the Gleason Lemmas of 
Breuillard--Green--Tao (see Theorem \ref{thm:gleason}).

This paper is organized as follows. 
In Section \ref{sec:rcd} we present applications of our main results to the theory of RCD spaces. 
In Section \ref{sec:examples} we provide examples showing why the hypotheses of the main theorems are necessary, 
as well as examples illustrating what happens when each condition in the definition of good approximation is removed. 
In Section \ref{sec:prelims} we review the background material used in this paper. 
In Section \ref{sec:egh-to-ga} we prove Theorem \ref{thm:egh-to-ga}. 
In Section \ref{sec:blowup} we construct good approximations with target $\mathbb{R}^k$ from good approximations with target a Lie group. 
In Section \ref{sec:max-small} we prove Theorem \ref{thm:largest-small}. 
In Section \ref{sec:dimension} we prove Theorems \ref{thm:main} and \ref{thm:largest-small-metric}. 
Finally, in Section \ref{sec:rcd-proofs} we give the proofs of the results stated in Section \ref{sec:rcd}.

\section{RCD spaces and applications}\label{sec:rcd}

For $K \in  \mathbb{R}$, $N \in  [1, \infty) $, the class of $\rcd (K,N)$
spaces consists of proper metric measure spaces that satisfy a synthetic condition 
of having Ricci curvature bounded below by $K$, dimension bounded above by $N$, and being infinitesimally Hilbertian.
$\rcd (K,N)$ spaces were first introduced by Ambrosio--Gigli--Savar\'e in \cite{ambrosio-gigli-savare, gigli} 
building upon the curvature-dimension condition developed by Sturm and Lott--Villani \cite{lott-villani, sturm}, and have been investigated 
extensively in the past decade.

We note that a large number of papers in the literature work with a condition known as $\rcd  ^{\ast}(K, N)$, originally introduced in \cite{bacher-sturm}. It is now known, thanks to the work of Cavalletti--Milman \cite{cavalletti-milman} and Li \cite{li}, that this condition is equivalent to the $\rcd (K, N)$ condition.

A significant milestone, due to Bru\'e--Semola, is that any $\rcd (K,N)$ space $(X,d , \mm )$ admits a 
unique integer $n \in [0, N]$ such that for $\mm$-almost every $x \in X$, one has
\[    (\lambda _i X, x )  \xrightarrow{pGH} (\mathbb{R}^n, 0)   \]
for any sequence $\lambda _i \to \infty $ \cite{brue-semola}. Such points are called $n$-\emph{regular points}, and the integer $n$ is called
the \emph{essential dimension} of $X$.

It is well known that for an $n$-dimensional Riemannian manifold $X$, if 
$G \leq \iso (X)$, then 
\begin{equation}\label{eq:max-symmetry}
    \dime (G) \leq \frac{n(n+1)}{2} ,  
\end{equation}
with equality only if after re-scaling, $X$ is isometric to one of  
$\mathbb{S}^n,$ $\mathbb{R}P^n$, $\mathbb{R}^n$, or $\mathbb{H}^n$.

For an $\rcd (K,N)$ space $(X,d, \mm)$, it was established in 
\cite{guijarro-santos, sosa} that $\iso (X)$ is a Lie group. Further, in \cite{guijarro-santos} it was shown that if $G \leq \iso (X)$, then 
\[   \dime (G) \leq \frac{\lfloor N \rfloor \lfloor N+1 \rfloor }{2} ,   \]
with equality only if after re-scaling,  $X$ is isometric to one of  
$\mathbb{S}^{ \lfloor N \rfloor } ,$ $\mathbb{R}P^{ \lfloor N \rfloor }$, $\mathbb{R}^{ \lfloor N \rfloor }$, or $\mathbb{H}^{ \lfloor N \rfloor }$.

In \cite[Theorem 6.7]{galaz-kell-mondino-sosa}, 
Galaz-Garc\'ia--Kell--Mondino--Sosa obtained an analogous result 
using the essential dimension in place of $ \lfloor N \rfloor $, 
assuming that $G$ is compact and that the $G$-action 
is Lipschitz, co-Lipschitz, and measure-preserving.  
As a consequence of our main results, 
we can remove these additional hypotheses.

\begin{Thm}\label{thm:max-symmetry}
Let $(X,d,\mm)$ be an $\rcd (K,N)$ space of essential dimension $n$, 
and $G\leq \iso (X)$ a closed subgroup. Then \eqref{eq:max-symmetry} holds, 
with equality only if after re-scaling,  $X$ is isometric to one of  
$\mathbb{S}^n$, $\mathbb{R}P^n$, $\mathbb{R}^n$, or $\mathbb{H}^n$.
\end{Thm}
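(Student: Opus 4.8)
The plan is to reduce the statement to the classical Riemannian bound \eqref{eq:max-symmetry} by a blow-up argument at a regular point, using Theorem \ref{thm:main} (in its $\rcd$ version, Remark \ref{rem:rcd}) to control the dimension of the limiting isometry group. First I would fix an $n$-regular point $x\in X$; by Bru\'e--Semola such a point exists (in fact $\mm$-a.e.\ point works), and by definition $(\lambda_i X, x)\xrightarrow{pGH}(\mathbb{R}^n,0)$ for any $\lambda_i\to\infty$. Since $G$ acts on $X$ by isometries, $G$ also acts on each rescaled space $(\lambda_i X, d/\lambda_i^{-1})$; after passing to a subsequence one obtains equivariant Gromov--Hausdorff convergence $(\lambda_i X, G, x)\xrightarrow{eGH}(\mathbb{R}^n, G_\infty, 0)$ for some closed group $G_\infty\leq\iso(\mathbb{R}^n)$. (One must be slightly careful that the orbits don't escape to infinity; this is where one uses that $x$ is a \emph{fixed-point-like} blow-up center — more precisely, one replaces $G$ by the isotropy-type information, or one simply notes that the rescaled actions on larger and larger balls subconverge, which suffices to extract $G_\infty$ with $\dime(G_\infty)\geq\dime(G)$ via equivariant compactness.)

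The key input is that under the rescaling $\lambda_i\to\infty$, the spaces $\lambda_i X$ are still $\rcd(K/\lambda_i^2, N)$ spaces of essential dimension $n$, and the limit $\mathbb{R}^n$ is an $\rcd(0,N)$ space of essential dimension $n$ as well. Hence Theorem \ref{thm:main}, via Remark \ref{rem:rcd}, applies and yields
\[
    \dime(G_\infty) \;\geq\; \limsup_{i\to\infty}\dime(G) \;=\; \dime(G).
\]
Now $G_\infty\leq\iso(\mathbb{R}^n)$ is a closed subgroup of the $n$-dimensional Euclidean isometry group, so the classical bound gives $\dime(G_\infty)\leq \frac{n(n+1)}{2}$, and combining the two inequalities proves \eqref{eq:max-symmetry}.

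For the rigidity statement, suppose equality $\dime(G)=\frac{n(n+1)}{2}$ holds. Then the chain of inequalities above is forced to be an equality, so $\dime(G_\infty)=\frac{n(n+1)}{2}=\dime(\iso(\mathbb{R}^n))$, which means $G_\infty$ acts transitively on $\mathbb{R}^n$ with full isotropy $O(n)$; in particular the blow-up at $x$ is \emph{all} of $\mathbb{R}^n$ with its full isometry group, so $x$ is a smooth point in a strong sense. I would then invoke the known rigidity for $\rcd$ spaces with maximal symmetry from \cite{guijarro-santos}: the bound $\dime(G)\leq \frac{\lfloor N\rfloor\lfloor N+1\rfloor}{2}$ with equality forcing $X$ to be $\mathbb{S}^{\lfloor N\rfloor}$, $\mathbb{R}P^{\lfloor N\rfloor}$, $\mathbb{R}^{\lfloor N\rfloor}$, or $\mathbb{H}^{\lfloor N\rfloor}$. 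Since the homogeneity produced by a maximal-dimensional $G$ forces $X$ to be a homogeneous space, hence a smooth Riemannian manifold of dimension equal to its essential dimension $n$, we get $\lfloor N\rfloor\geq n$; but equality $\frac{n(n+1)}{2}=\dime(G)\leq\frac{\lfloor N\rfloor\lfloor N+1\rfloor}{2}$ combined with the manifold structure forces $n=\lfloor N\rfloor$ effectively in the rigidity case, and the \cite{guijarro-santos} classification then identifies $X$ with one of the four model spaces of dimension $n$.

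The main obstacle I expect is the rigidity part rather than the inequality: one must argue that maximal symmetry degree (with respect to the essential dimension $n$, which may be strictly less than $\lfloor N\rfloor$) forces $X$ to actually be an $n$-dimensional smooth space so that the Riemannian classification applies. The cleanest route is probably to show directly that a maximal-dimensional closed $G\leq\iso(X)$ must act transitively (so $X$ is a homogeneous $\rcd$ space, hence a smooth manifold by known regularity of homogeneous $\rcd$ spaces), and that its dimension equals $n$; then $X$ is an $n$-manifold with $\dime(\iso(X))\geq\frac{n(n+1)}{2}$, and the classical maximal-symmetry rigidity finishes the proof. The inequality itself is a routine blow-up plus an application of Theorem \ref{thm:main}.
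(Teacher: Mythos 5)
Your argument is essentially the paper's: blow up at an $n$-regular point, extract $(\mathbb{R}^n,G_\infty,0)$ by Theorem \ref{thm:fy}, and apply Theorem \ref{thm:main} via Remark \ref{rem:rcd} to get $\dime(G)\leq\dime(G_\infty)\leq n(n+1)/2$; the inequality part is handled identically. For the rigidity, the ``cleanest route'' you describe at the end is exactly what the paper does, and the one step you leave open --- that transitivity of $G_\infty$ on the blow-up forces transitivity of $G$ on $X$ --- is supplied by the paper's Lemma \ref{lem:trans}: if $(\lambda_i X,G,p)$ converges equivariantly to a triple with compact quotient, then $G$ acts transitively, essentially because $X/G$ is a geodesic space whose blow-ups can only be points if it is itself a point. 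Once $X$ is a homogeneous $\rcd(K,N)$ space it is a smooth Riemannian manifold of dimension $n$ and the classical classification applies; your detour through the $\lfloor N\rfloor$-rigidity of \cite{guijarro-santos} is unnecessary and is the weakest part of the writeup, since the comparison between $n$ and $\lfloor N\rfloor$ there is not actually needed.
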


We also obtain a similar bound for groups not acting transitively, 
extending the first part of \cite[Theorem 6.8]{galaz-kell-mondino-sosa}.
Again, we remove the hypotheses that $G$ is compact and that  
its action is Lipschitz, co-Lipschitz, and measure-preserving.

\begin{Thm}\label{thm:not-transitive}
Let $(X,d,\mm)$ be an $\rcd (K,N)$ space of essential dimension $n$, 
and $G\leq \iso (X)$ a closed subgroup not acting transitively. 
Then 
\begin{equation}\label{eq:nt-max-symmetry}
\dime (G) \leq \frac{n(n-1)}{2} ,
\end{equation}
with equality only if $X/G$ is isometric to a real interval or a circle.
\end{Thm}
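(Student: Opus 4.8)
The strategy is to localize at a regular point, estimate the orbit dimension and the isotropy dimension separately, and combine them by a short computation, the isotropy estimate coming from a blow-up together with the $\rcd$ version of Theorem~\ref{thm:main} recorded in Remark~\ref{rem:rcd}. Since $\iso(X)$ is a Lie group \cite{guijarro-santos, sosa} and $\rcd$ spaces are geodesic, $G$ is a Lie group acting properly and isometrically on the connected space $X$, and we may assume $G$ connected: this changes neither $\dime(G)$ nor transitivity, and if $G_0$ acts with cohomogeneity one then $X/G$ is the quotient of $X/G_0$ by the finite group $G/G_0$, hence still a real interval or a circle. Fix a point $x$ which is $n$-regular and lies on an orbit of maximal dimension (both conditions are satisfied on dense subsets), and put $d:=\dime(G\cdot x)=\dime(G)-\dime(G_x)$. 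Because $G$ is not transitive, $G\cdot x$ is a proper homogeneous subset through a point where $X$ has topological dimension $n$, so invariance of domain gives $d\le n-1$, and the tangent cone of $G\cdot x$ at $x$ is a $d$-dimensional linear subspace $V\subset T_xX=\mathbb{R}^n$.

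Now blow up at $x$, keeping track of the isotropy. The stabilizer $G_x$ is compact (the action is proper), and since $x$ is $n$-regular, the rescaled triples $(\lambda_iX,G_x,x)$ with $\lambda_i\to\infty$ subconverge in the pointed equivariant Gromov--Hausdorff sense to $(\mathbb{R}^n,H,0)$ for some closed $H\le\iso(\mathbb{R}^n)$ fixing the origin, so $H\le O(n)$. The spaces $\lambda_iX$ are $\rcd(K,n)$ of essential dimension $n$ converging to $\mathbb{R}^n$, so Theorem~\ref{thm:main} applied through Remark~\ref{rem:rcd} gives $\dime(H)\ge\dime(G_x)$. As $G_x$ preserves the orbit $G\cdot x$, the limit group $H$ preserves $V$, hence also $V^\perp$, so $H\le O(V)\times O(V^\perp)\cong O(d)\times O(n-d)$ and
\[
\dime(G)=d+\dime(G_x)\le d+\binom{d}{2}+\binom{n-d}{2}=\binom{n}{2}+d(d-n+1)\le\binom{n}{2},
\]
the last inequality because $0\le d\le n-1$. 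This establishes \eqref{eq:nt-max-symmetry}.

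For the equality discussion, $\dime(G)=\binom{n}{2}$ forces $d(d-n+1)=0$, so $d\in\{0,n-1\}$ and $\dime(H)=\binom{d}{2}+\binom{n-d}{2}$. If $d=n-1$ the action has cohomogeneity one. If $d=0$ then $G=G_x$ fixes the regular point $x$ and $\dime(H)=\binom{n}{2}$ forces $H_0=SO(n)$; by equivariant convergence the orbits of points near $x$ then have dimension approaching $n-1$, so the maximal orbit dimension is again $n-1$. In either case $G$ acts with cohomogeneity one, and the cohomogeneity-one structure of isometric actions on $\rcd$ spaces identifies $X/G$ with a connected $1$-dimensional manifold with boundary, that is, a real interval or a circle (compare \cite{galaz-kell-mondino-sosa} and the references therein).

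The main obstacle is the structural input used above: that the orbit through an $n$-regular point is a homogeneous subset with a linear tangent cone inside $T_xX=\mathbb{R}^n$, that the rescaled equivariant triples converge (an Arzel\`a--Ascoli argument for the isometries fixing $x$, using compactness of $G_x$), and the cohomogeneity-one classification of the quotient. These follow from the regularity theory of $\rcd$ spaces together with properness of the action, but assembling them carefully is the technical heart of the proof.
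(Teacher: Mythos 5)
Your inequality argument takes a genuinely different route from the paper's: you perform the orbit--isotropy decomposition $\dime(G)=\dime(G\cdot x)+\dime(G_x)$ upstairs in $X$ and only blow up the isotropy group, whereas the paper blows up the whole triple to $(\mathbb{R}^n,G_\infty,0)$, transfers the dimension bound via Theorem \ref{thm:main}, and performs the orbit--isotropy computation entirely inside $\iso(\mathbb{R}^n)$ (Lemma \ref{lem:euclidean-nt}), where orbits are smooth submanifolds. This difference is exactly where your argument develops gaps. First, the step ``$G$ not transitive $\Rightarrow d\le n-1$ by invariance of domain'' is not justified: an $\rcd(K,N)$ space of essential dimension $n$ is not known to be a topological $n$-manifold near an $n$-regular point, so invariance of domain does not apply; in $\mathbb{R}^n$ this step is fine (a full-dimensional orbit is open and closed), which is why the paper routes non-transitivity through Lemma \ref{lem:trans} into the Euclidean picture before using it. Second, the claim that the rescalings of $G\cdot x$ converge to a $d$-dimensional \emph{linear} subspace $V\subset\mathbb{R}^n$ (needed to place $H$ inside $O(d)\times O(n-d)$) is asserted without proof; the orbit carries no a priori bi-Lipschitz parametrization by $G/G_x$ with respect to the restricted metric, and blow-up limits of orbits need not be linear without further argument. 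Both issues disappear in the paper's approach because the decomposition is carried out for $G_\infty\le\iso(\mathbb{R}^n)$.

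The equality case is a more serious gap. You reduce to ``$G$ acts with cohomogeneity one, hence $X/G$ is a connected $1$-manifold with boundary,'' citing the cohomogeneity-one classification. No such off-the-shelf result is available for $\rcd$ spaces, and proving precisely this statement is the technical heart of the paper's proof: it constructs, via an optimal-transport selection of geodesics of $X$ projecting to geodesics of $X/G$, a point $[x]\in X/G$ whose unique tangent cone is $\mathbb{R}$, and then argues topologically that $X/G$ is a $1$-manifold. Your $d=0$ subcase is also internally inconsistent: having chosen $x$ on an orbit of maximal dimension, $d=0$ means all orbits are discrete, so the connected component $G_0$ acts trivially and $\dime(G)=0$, which is incompatible with $\dime(G)=\binom{n}{2}$ unless $n\le 1$; the conclusion ``the maximal orbit dimension is again $n-1$'' contradicts your own choice of $x$. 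To repair the proof along the paper's lines, replace your in-$X$ decomposition by the blow-up of the full triple together with Lemma \ref{lem:trans} and Lemma \ref{lem:euclidean-nt}, and supply an actual argument that $X/G$ is one-dimensional in the equality case.
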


Note that the hypothesis that $G$ does not act transitively 
is automatically satisfied if $X$ has non-empty boundary, 
as defined in \cite{kapovitch-mondino}. Therefore, 
Theorem \ref{thm:not-transitive} extends \cite[Theorem 7.1]{galaz-guijarro}
from Alexandrov spaces to $\rcd$ spaces\footnote{The bound stated in \cite[Theorem 7.1]{galaz-guijarro} should read $\dime(\iso (X))\leq n(n-1)/2$, instead of $\dime (\iso (X)) \leq (n-1)(n-2)/2 $.}.

Let $(X,d,\mm)$ be an $\rcd (K,N)$ space and let $G \leq \iso (X)$ be a compact subgroup. By \cite{santos-r},
the measure $\tilde{\mm}$ obtained by averaging the $G$-orbit of $\mm$ is $G$-invariant and makes   $(X,d,\tilde{\mm})$ an 
$\rcd (K,N)$ space. Then,  by \cite{galaz-kell-mondino-sosa}, 
$ ( X^{\ast} , d ^{\ast}, \mm ^{\ast})$  is also an $\rcd (K,N)$
space, where $X^{\ast} = X/G$, $d^{\ast}$ is the quotient metric, 
and $\mm ^{\ast}$ the pushforward of $\tilde{\mm}$  
under the projection  $X \to X^{\ast}$. In this setting, we can also bound the dimension of $G$ in terms of the difference between the essential dimensions of $X$ and $X/G$.

\begin{Thm}\label{thm:dimension-difference}
Let $(X,d,\mm)$ be an $\rcd (K,N)$ space of essential dimension $n$, 
and let $G\leq \iso (X)$ be a compact subgroup. If the $\rcd (K,N)$ space
$(X^{\ast}, d^{\ast}, \mm ^{\ast})$ described above has 
essential dimension $m$, then 
\begin{equation}\label{eq:dimension-difference}
\dime (G) \leq \frac{(n-m)(n-m+1)}{2} .
\end{equation}
\end{Thm}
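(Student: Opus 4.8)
The plan is to reduce Theorem \ref{thm:dimension-difference} to Theorem \ref{thm:max-symmetry} by producing an auxiliary $\rcd$ space on which $G$ acts with large symmetry degree. Starting from the situation described before the statement, we have the $G$-invariant measure $\tilde{\mm}$ making $(X,d,\tilde{\mm})$ an $\rcd(K,N)$ space of essential dimension $n$, and the quotient $(X^\ast,d^\ast,\mm^\ast)$ is an $\rcd(K,N)$ space of essential dimension $m$. The natural object to study is a generic fiber of the projection $\pi : X \to X^\ast$: since $m < n$ (the case $m=n$ forces $\dime(G)=0$ by Theorem \ref{thm:not-transitive}, which is consistent with \eqref{eq:dimension-difference}, so we may assume $m<n$), one expects the fiber $\pi^{-1}(x^\ast)$ over an $m$-regular point $x^\ast$ to carry, after rescaling and blowing up, an $\rcd(0,n-m)$ structure of essential dimension $n-m$ on which the stabilizer $G_{x^\ast}$ acts isometrically and transitively-enough to apply the maximal symmetry bound.

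The key steps, in order, are: (i) fix an $n$-regular point $x\in X$ whose image $x^\ast = \pi(x)$ is an $m$-regular point of $X^\ast$ — such points exist $\tilde{\mm}$-a.e. by a Fubini-type argument combining Bru\'e--Semola regularity for $X$ and $X^\ast$ with the coarea/disintegration structure of the quotient map; (ii) take a blow-up sequence $\lambda_i \to \infty$ so that $(\lambda_i X, x) \xrightarrow{pGH} (\mathbb{R}^n, 0)$ and simultaneously $(\lambda_i X^\ast, x^\ast) \xrightarrow{pGH} (\mathbb{R}^m, 0)$, and observe that the isotropy groups $G_{x,i}$ (or suitable conjugates fixing points near $x$) converge equivariantly; (iii) identify the limit: the equivariant limit of $(\lambda_i X, G, x)$ is $(\mathbb{R}^n, H, 0)$ for some closed $H \leq \iso(\mathbb{R}^n) = O(n)\ltimes \mathbb{R}^n$, and the commuting diagram with the quotients forces the orbit $H\cdot 0$ to have dimension at most... actually the point is that $\mathbb{R}^n / H$ recovers the blow-up of $X^\ast$ at $x^\ast$, namely $\mathbb{R}^m$, so $H$ acts on $\mathbb{R}^n$ with quotient $\mathbb{R}^m$; (iv) by Corollary \ref{cor:dimension-semicontinuity} (or directly Theorem \ref{thm:main}), $\dime(H) \geq \limsup_i \dime(G_i)$ where $G_i$ is the relevant subgroup of $\iso(\lambda_i X)$, and scaling does not change the group, so $\dime(H) \geq \dime(G)$; (v) finally, a closed subgroup $H$ of $\iso(\mathbb{R}^n)$ with $\mathbb{R}^n/H \cong \mathbb{R}^m$ has its maximal compact (the "rotational part" fixing a point) acting on an $(n-m)$-dimensional normal space with full symmetry, so $\dime(H) \leq \dime(\iso(\mathbb{R}^{n-m})\text{-stabilizer}) = \binom{n-m+1}{2}$, i.e. $\dime(H) \leq (n-m)(n-m+1)/2$; combining gives \eqref{eq:dimension-difference}.

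The main obstacle I expect is step (iii): making rigorous the claim that the equivariant blow-up at a well-chosen point produces a linear action on $\mathbb{R}^n$ whose quotient is exactly $\mathbb{R}^m$. One must argue that the quotient map $X \to X^\ast$ behaves well under blow-up at $\tilde{\mm}$-generic points — that $(\lambda_i X)/G \to \mathbb{R}^n/H$ and that this identifies with the blow-up $(\lambda_i X^\ast, x^\ast) \to \mathbb{R}^m$. This requires knowing that the projection $\pi$ is, infinitesimally at good points, a metric submersion onto its image, which should follow from the $\rcd$ structure of the quotient together with the fact that metric-measure quotients of $\rcd$ spaces by compact group actions are again $\rcd$ and "non-collapsing along the fibers" at regular points. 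Once the limiting picture $(\mathbb{R}^n, H, 0)$ with $\mathbb{R}^n/H = \mathbb{R}^m$ is established, step (v) is a standard fact about isometry groups of Euclidean space, and steps (i), (ii), (iv) are routine given the tools already assembled in the paper.
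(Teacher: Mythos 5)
Your overall strategy is the paper's: choose a point $x$ that is $n$-regular in $X$ and whose image $[x]$ is $m$-regular in $X^{\ast}$ (both sets have full measure, so they intersect), blow up equivariantly to $(\mathbb{R}^n, G_{\infty},0)$, apply Theorem \ref{thm:main} together with Remark \ref{rem:rcd} to get $\dime(G)\leq\dime(G_{\infty})$, and then bound $\dime(G_{\infty})$ using the relation between $\mathbb{R}^n/G_{\infty}$ and the blow-up of $X^{\ast}$ at $[x]$. The opening discussion of generic fibers carrying an $\rcd(0,n-m)$ structure is never used in your own argument and can be dropped, as can the case split on $m=n$.

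The gap sits exactly where you flag it, but the two halves deserve different verdicts. The half you call the main obstacle --- that $\mathbb{R}^n/G_{\infty}$ is isometric to $\mathbb{R}^m$ --- is easier than you fear: Fukaya's theorem on convergence of quotients under eGH convergence (the result behind Lemma \ref{lem:trans}) gives $(\lambda_i X/G,[x])\to(\mathbb{R}^n/G_{\infty},[0])$, and $\lambda_i X/G=\lambda_i X^{\ast}$ with $[x]$ an $m$-regular point, so the limit is $\mathbb{R}^m$; no infinitesimal submersion analysis is required. The half you treat as a ``standard fact,'' step (v), is the one that needs real care. Knowing only that the generic orbit is $(n-m)$-dimensional, the orbit--stabilizer count of Lemma \ref{lem:euclidean-nt} yields $\dime(G_{\infty})\leq (n-m)(n-m+1)/2+m(m-1)/2$, which overshoots the target whenever $m\geq 2$; moreover the normal space to a principal orbit is $m$-dimensional, not $(n-m)$-dimensional as you write, so ``the rotational part acts on an $(n-m)$-dimensional normal space'' is not the right picture. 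To remove the extra $m(m-1)/2$ you must use that the quotient is \emph{isometric} to $\mathbb{R}^m$, not merely $m$-dimensional: either show the slice representation of the principal isotropy is trivial so that the isotropy embeds in $O(n-m)$ via its action on the orbit, or --- as the paper does --- invoke \cite[Proposition 44]{santos-zamora}, which produces a $G_{\infty}$-invariant splitting $\mathbb{R}^n=\mathbb{R}^m\times\mathbb{R}^{n-m}$ with $G_{\infty}$ acting trivially on the first factor, whence $G_{\infty}\leq\iso(\mathbb{R}^{n-m})$ and \eqref{eq:dimension-difference} follows at once.
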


\section{Examples}\label{sec:examples}

The following example shows that Theorem \ref{thm:main} fails if one removes the non-collapsing condition. 

\begin{Exa}
    Let $X_i$ be the $n$-dimensional round sphere of radius $1/i$, and $G_i : = \iso (X_i)$. Then $\iso (X_i)$ has dimension $n(n+1)/2$. However, for any $p_i \in X_i$ one has 
    \[   (X_i, G_i , p_i) \xrightarrow{eGH} (\{ \ast \} , \{ e \} , \ast ) .   \]
\end{Exa}

The following example shows that under the conditions of Theorem \ref{thm:largest-small-metric}, if \eqref{eq:boundedly-generated} fails, then the maximal small subgroups $H_i$ are in general not normal in $G_i$. 

\begin{Exa}\label{ex:max-small-no-normal}
    Let $V : = \{ a,b,c, x, y, z \} $ and $E: = \{ (a,x), (b,y), (c,z), (a,b), (b,c), (c,a) \} $. Let $X_i$ be the graph $(V,E)$ equipped with the length metric in which each edge has length $i$. Let $p_i \in X_i $ be the vertex corresponding to $x$, and $G_i : = \iso (X_i) $. Notice that $G_i$ can be identified with the group of permutations of $\{ a,b,c\}$.  It is then not hard to show that 
    \[       (X_i,G_i ,p_i) \xrightarrow{eGH} ([0, \infty), \{ e \} , 0 )  .       \]
    The sequence of  groups $H_i : = \{ e, (b\,c ) \} \leq G_i $ is small since both elements of $H_i$  fix every point at distance $\leq i$ from $p_i$.  The sequence $H_i$ is maximal small, since any element of $G_i \backslash H_i$ moves $p_i$ to another leaf of the graph, which is at distance $3i$. However, it is clear that $H_i$ is not normal in $G_i$. 
\end{Exa}

The following example shows that if one removes from Theorem \ref{thm:largest-small} the hypothesis that  $G$ is a Lie group, then maximal small subgroups $H_i$ may fail to exist.

\begin{Exa}\label{ex:no-max-small}
    Let $G$ be a first countable locally compact Hausdorff group that is not a Lie group (for example, the $p$-adic integers).  Let $G_i = G$ for all $i \in \mathbb{N}$ and let $\phi_i : G _i \to G$ be the identity for all $i \in \mathbb{N}$. A sequence of small subgroups $H_i \leq G_i$ consists of a sequence of subgroups of $G$ with $h_i \to e$ for all $h_i \in H_i$. 

    Let $H_i \leq G_i$ be any sequence of non-trivial small subgroups. By simply reindexing and repeating elements of the sequence, it is not hard to construct another sequence of small subgroups $H_i ' \leq G_i$ with $H_i' \nleq H_i $ for all $i$ large enough. This implies that there is no sequence of maximal small subgroups.

\end{Exa}

\begin{Exa} Here we illustrate the roles of conditions (I--V) in the definition of good approximation.
\begin{itemize}
    \item Let $G_i = \mathbb{Z}$, $G = \mathbb{R}$, $A_i : = \{ -1, 0, 1 \}$, $A : = (- 1 , 1 )$, and $\phi_i : G_i \to G$ the natural inclusion.  The maps $\phi _i $ satisfy all the conditions of Definition \ref{def:ga} except \eqref{item:ga-1}. 
    \item Let $G_i = G = \mathbb{Z}$, $A_i : = \{ -i, \ldots , i \}$, $A: = \{ -1, 0 , 1 \} $, and $\phi_i : G_i \to G$ the identity.  The maps $\phi _i $ satisfy all the conditions of Definition \ref{def:ga} except \eqref{item:ga-2}.
    \item Let $G$ and $H$ be two non-isomorphic groups for which there is a homeomorphism $\phi : H \to G$ that respects inverses (for example, two simply-connected nilpotent Lie groups of the same dimension). Let  $A \subset G$ be an open symmetric pre-compact set,  $G_i : = H$,  $\phi_i : = \phi $, and $A_i : = \phi_i^{-1}(A)$.  Then the maps $\phi _i $ satisfy all the conditions of Definition \ref{def:ga} except \eqref{item:ga-3}.
    \item Let $G_i : = \mathbb{R}^2$, $G : = \mathbb{R}$, $A_i : = B_1 (0) \subset G_i$, $A : = (-1,1) $, $\phi_i : G_i \to G$ as $\phi_i (x,y) = x$. The maps $\phi _i $ satisfy all the conditions of Definition \ref{def:ga} except \eqref{item:ga-4}.
    \item Let $G_i = G = A_i = A = \mathbb{S}^1$, and let $\phi_i : G_i \to G$ be a non-continuous group homomorphism with dense image.  The maps $\phi _i $ satisfy all the conditions of Definition \ref{def:ga} except \eqref{item:ga-5}.
\end{itemize}

\end{Exa}

\section{Preliminaries}\label{sec:prelims}

\subsection{Notation}
In a topological space $X$, if $A \subset X$, we denote by $\inte (A)$ the interior of $A$, and by $\overline{A}$ the closure of $A$. In a metric space $X$, we denote by $B_r^X(x)$ the open ball with center $x$ and radius $r$, and by $\overline{B}_r^X(x)$ the corresponding closed ball. If the metric space is clear from the context, we write $B_r(x)$ and $\overline{B}_r(x)$ instead.

For a group $G$, we denote its identity element by $e_G$, or by $e$ when the group is clear from the context. We say a subset $S \subset G$ is \emph{symmetric} if it contains the identity and is closed under inverses. When $G$ is a topological group, we write $H \leq G$ if $H$ is a closed subgroup.

If $G$ is a Lie group with Lie algebra $\mathfrak{g}$, we denote by $\exp $ and $\log$ the algebraic exponential and logarithm maps between (subsets of) them, even when $G$ is equipped with a Riemannian metric.

\subsection{Hyperspaces}

Let $X$ be a metric space. We denote by $\mathfrak{M}(X) $ the set of  closed sets of $X$ equipped with the Hausdorff distance $d_H$. 

\begin{Thm}[Blaschke \cite{
petrunin}]\label{thm:blaschke}
 $X$ is compact if and only if  $\mathfrak{M}(X)$ is compact.
\end{Thm}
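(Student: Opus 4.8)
The plan is to prove the two implications separately, using for the forward direction the standard characterization of compactness of a metric space as completeness plus total boundedness, and for the reverse direction the isometric embedding of $X$ into $\mathfrak{M}(X)$ by singletons.

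Assume first that $X$ is compact; then every closed subset of $X$ is compact, so $\mathfrak{M}(X)$ is the space of compact subsets with the Hausdorff distance. To see that $\mathfrak{M}(X)$ is totally bounded, fix $\varepsilon>0$, choose a finite $\varepsilon$-net $F=\{x_1,\dots,x_N\}\subset X$, and note that for any nonempty closed $A\subset X$ the set $S:=\{x_j\in F : d(x_j,A)\le\varepsilon\}$ is nonempty and satisfies $d_H(A,S)\le\varepsilon$; hence the finitely many subsets of $F$ form an $\varepsilon$-net of $\mathfrak{M}(X)$. To see that $\mathfrak{M}(X)$ is complete, given a Cauchy sequence $(A_n)$ I would take as candidate limit the closed set $A:=\bigcap_{n\ge 1}\overline{\bigcup_{m\ge n}A_m}$, verify $A\neq\emptyset$ by extracting along a rapidly converging subsequence a Cauchy sequence $x_{n_k}\in A_{n_k}$ and using completeness of $X$, and then verify $d_H(A_n,A)\to 0$ by a routine three-$\varepsilon$ estimate. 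Completeness together with total boundedness then yields compactness of $\mathfrak{M}(X)$.

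Conversely, assume $\mathfrak{M}(X)$ is compact. The assignment $\iota(x):=\{x\}$ defines an isometric embedding $\iota\colon X\to\mathfrak{M}(X)$, since $d_H(\{x\},\{y\})=d(x,y)$. Its image $\iota(X)$ is closed in $\mathfrak{M}(X)$: from the estimate $|\diam A-\diam B|\le 2\,d_H(A,B)$ any Hausdorff limit of a sequence of singletons has diameter $0$ and is therefore itself a singleton. A closed subspace of a compact space is compact, so $X\cong\iota(X)$ is compact.

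The only genuinely delicate point is the completeness step in the forward direction—choosing the correct limit set and checking that the Cauchy sequence converges to it; the total boundedness estimate and the entire reverse implication are short, essentially formal arguments.
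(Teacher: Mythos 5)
Your proof is correct. The paper does not actually prove this statement---it is quoted from Petrunin's text---and your argument is the standard one: total boundedness of $\mathfrak{M}(X)$ via subsets of a finite $\varepsilon$-net, completeness via the closed lim-sup set $\bigcap_n\overline{\bigcup_{m\ge n}A_m}$, and the reverse implication via the isometric embedding by singletons whose image is closed thanks to the estimate $|\diam A-\diam B|\le 2\,d_H(A,B)$. The only conventions worth making explicit are that $\mathfrak{M}(X)$ is taken to consist of \emph{nonempty} closed sets (otherwise $d_H$ is not defined at $\emptyset$) and that the completeness step, which you only sketch, is carried out as you indicate; both are routine.
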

    
\begin{Lem}\label{lem:theta-continuity}
    Let $X$ be a metric space and   $\theta : [ a,b] \to \mathfrak{M}(X) $ a function with
    \begin{equation}\label{eq:theta-monotone}
          \theta (r) \subset \theta (s) \text{ for all } r \leq  s .             
    \end{equation}
    If $\theta (b)$ is compact, then $\theta$ has at most countably many discontinuities.
\end{Lem}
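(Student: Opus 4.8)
The plan is to view $\theta$ as a monotone function valued in the compact metric space $\mathfrak{M}(\theta(b))$ and to adapt the classical proof that a monotone real function has at most countably many discontinuities, with Hausdorff distances playing the role of jump sizes and total boundedness playing the role of the finiteness $\sum(\mathrm{jumps})<\infty$. First I would set things up: by Theorem \ref{thm:blaschke} the space $\mathfrak{M}(\theta(b))$ is compact, and by \eqref{eq:theta-monotone} every $\theta(r)$ is a closed subset of $\theta(b)$, hence a compact element of $\mathfrak{M}(\theta(b))$. After discarding the initial subinterval on which $\theta$ is empty (there $\theta$ is locally constant, so continuous away from its right endpoint, contributing at most one discontinuity), we may assume $\theta(r)\neq\emptyset$ for all $r$. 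For $r\in(a,b)$ I set
\[
\theta(r^-):=\overline{\textstyle\bigcup_{t<r}\theta(t)},\qquad \theta(r^+):=\textstyle\bigcap_{t>r}\theta(t),
\]
which by \eqref{eq:theta-monotone} are nonempty closed subsets of $\theta(b)$ satisfying $\theta(r^-)\subseteq\theta(r)\subseteq\theta(r^+)$.

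Next I would prove the continuity criterion: if $\theta(r^-)=\theta(r^+)$ then $\theta$ is continuous at $r$. For $s\uparrow r$ one has $\theta(s)\subseteq\theta(r)$, and since $\theta(r)=\overline{\bigcup_{t<r}\theta(t)}$ is compact, finitely many points of the dense subset $\bigcup_{t<r}\theta(t)$ form an $\varepsilon$-net of $\theta(r)$; all of them lie in $\theta(s)$ once $s$ is close enough to $r$, so $d_H(\theta(s),\theta(r))\leq\varepsilon$. For $s\downarrow r$ one has $\theta(r)\subseteq\theta(s)$, and if $\theta(s)$ failed to lie in the $\varepsilon$-neighborhood of $\theta(r)$ along some sequence $s_k\downarrow r$, then a convergent subsequence of the witnessing points (available since they all lie in the compact set $\theta(s_1)$) would converge to a point of $\bigcap_{t>r}\theta(t)=\theta(r^+)=\theta(r)$, a contradiction. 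Hence the discontinuity set $D$ of $\theta$ satisfies $D\subseteq\{a,b\}\cup D'$, where $D':=\{r\in(a,b):\theta(r^-)\subsetneq\theta(r^+)\}$.

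Finally I would show $D'$ is countable by a packing argument. For $r\in D'$ the closed sets $\theta(r^-)\subsetneq\theta(r^+)$ satisfy $d_H(\theta(r^-),\theta(r^+))>0$, so $D'=\bigcup_n D'_n$ with $D'_n:=\{r:d_H(\theta(r^-),\theta(r^+))>1/n\}$, and it suffices to bound $|D'_n|$. Given $r_1<\dots<r_m$ in $D'_n$, pick $z_j\in\theta(r_j^+)$ with $d(z_j,\theta(r_j^-))>1/n$; for $j<k$, choosing any $s\in(r_j,r_k)$ gives $z_j\in\theta(r_j^+)\subseteq\theta(s)\subseteq\theta(r_k^-)$, whence $d(z_j,z_k)>1/n$. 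Thus $\{z_1,\dots,z_m\}$ is $(1/n)$-separated in $\theta(b)$, so $m$ is at most the (finite) maximal cardinality of a $(1/n)$-separated subset of the totally bounded set $\theta(b)$. Therefore $D\subseteq\{a,b\}\cup\bigcup_n D'_n$ is countable.

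The step I expect to require the most care is the continuity criterion in the second paragraph: it is precisely there that compactness of $\theta(b)$ is used in an essential way, through a finite $\varepsilon$-net on the left-hand side and a subsequential-limit argument on the right-hand side. Everything else is a routine transcription of the classical monotone-function argument.
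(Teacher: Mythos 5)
Your proof is correct, and it follows the same overall strategy as the paper's: stratify the discontinuities by jump size and bound each stratum by a packing number supplied by compactness of $\theta(b)$. The implementations differ in two respects, though. First, you route the argument through the one-sided limits $\theta(r^-)$, $\theta(r^+)$ and a continuity criterion ($\theta(r^-)=\theta(r^+)$ implies continuity at $r$), which costs you the two separate compactness arguments in your second paragraph; the paper skips this step entirely by defining $S_n$ directly as the set of $r$ with $\limsup_{t\to r}d_H(\theta(t),\theta(r))>1/n$ and observing that any $r_2\in S_n$ forces $d_H(\theta(r_1),\theta(r_3))\geq 1/n$ for every bracketing pair $r_1<r_2<r_3$, using only the monotonicity of $d_H$ under nested inclusions. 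Second, you pack the witness points $z_j$ directly in $\theta(b)$, using only its total boundedness, whereas the paper packs the sets $\theta(t_j)$ themselves in the hyperspace $\mathfrak{M}(\theta(b))$ and invokes Blaschke's theorem. The paper's version is shorter; yours makes explicit where compactness enters (finite $\varepsilon$-net on the left, subsequential limit on the right, total boundedness for the count) and dispenses with Blaschke. Your preliminary reduction to nonempty $\theta(r)$ is a sensible precaution about the convention for $d_H$ against the empty set, which the paper does not address (and does not need in its application, where $\theta(r)$ always contains the identity).
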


\begin{proof}
    Fix $n \in \mathbb{N}$, and let 
    \[      S_{n} : = \{ r \in [a,b] \, \vert \,    \limsup _{t \to r} d_H( \theta (t), \theta (r)  ) > 1/ n \} .                                           \]
    Take $r_1 < r_2 < r_3 $ with $r_2 \in S_{n}$. By definition of $S_{n}$, there is $t \in (r_1, r_3)$ with 
    \[     d _H(\theta (t), \theta (r_2)) \geq 1/n.                \]
    If $t < r_2$, then $d_H(\theta (r_1), \theta (r_2)) \geq 1/n$, and if $t> r_2$, then we have $d_H(\theta (r_3), \theta (r_2)) \geq 1/n$. In either case,  from \eqref{eq:theta-monotone} we have
    \[   d_H(\theta (r_1), \theta (r_3)) \geq 1/n .                    \]
    Therefore, the number of elements of $S_{n} \backslash \{ a, b \} $ cannot exceed the maximal number of $1/n$-separated elements in $\mathfrak{M}(\theta (b))$, which by Theorem \ref{thm:blaschke} is finite. Then the set of discontinuities of $\theta$ is contained in the countable set  $ \bigcup_{n \in \mathbb{N}} S_{n} .  $
\end{proof}

\subsection{Gromov--Hausdorff convergence} 
\begin{Def}[Fukaya \cite{fukaya}]    
Let $(X,p)$ and $(Y,q)$ be pointed proper metric spaces and $G \leq \iso (X)$, $H \leq \iso (Y)$ closed subgroups.  For $\varepsilon > 0 $, an $\varepsilon$-Hausdorff approximation between the pairs $(X,G,p) $ and $ (Y,H,q)$ consists of a metric $\overline{d}$ on $X \sqcup Y$ and functions $\phi : G \to H$ and $\psi : H \to G$ such that 
    \begin{itemize}
        \item The restriction of $\overline{d}$ to either of $X$ or $Y$ coincides with the original one.
        \item For each $x \in B_{1/ \varepsilon} ^X (p)$, there is $y \in B_{1/ \varepsilon}^Y(q)$ with $\overline{d}(x,y) < \varepsilon$. 
        \item For each $y \in B_{1 / \varepsilon} ^Y (q)$, there is $x \in B_{1/ \varepsilon} ^X (p)$ with $\overline{d}(x, y) < \varepsilon$. 
        \item $\overline{d}(p,q) < \varepsilon$. 
        \item For each $x \in B_{1/ 3\varepsilon}(p)$, $y \in B_{1/3 \varepsilon}(q)$,  $g \in G$, and $h \in H$  with $d(gp,p), d(hq,q) < 1/ 3\varepsilon$, one has
        \begin{gather*}
               \vert \overline{d}( x,y ) - \overline{d} (gx , \phi(g) y) \vert < \varepsilon, \\
                \vert \overline{d}( x,y ) - \overline{d} (\psi (h)x , h y) \vert < \varepsilon  .
        \end{gather*}
    \end{itemize}
    The equivariant Gromov--Hausdorff distance $d_{eGH}((X,G,p), (Y,H,q)) $ is defined to be the infimum  of $1/5$ and the $\varepsilon > 0 $ for which there are $\varepsilon$-Hausdorff approximations between the triples $(X,G,p)$ and $(Y,H,q)$.   
\end{Def}

Given a sequence $(X_i,p_i)$ of pointed proper metric spaces and $G_i \leq \iso (X_i)$ closed subgroups, we write 
\[   (X_i,G_i, p_i) \xrightarrow{eGH} (X,G,p)    \]
if $(X,p)$ is a pointed proper metric space, $G \leq \iso (X)$ a closed subgroup, and 
\[     d_{eGH} ( (X_i,G_i,p_i), (X,G,p)) \to 0 \text{ as } i \to \infty .      \]
In such a case, there are sequences of functions $f_i : X_i \to X$ and $\phi_i : G_i \to G$ with $ \lim _{i \to \infty} d(f_i (p_i),p)  = 0 $ and such that:
\begin{itemize}
    \item For each $r > 0 $, one has
    \[  \lim_{i \to \infty}  \sup _{x,y \in B_r(p_i)}   \vert d(f_i (x), f_i(y)) - d(x,y) \vert  = 0 .   \]
    \item For each $r > 0 $, one has
    \[   \lim_{i \to \infty} \sup _{y \in B_r(p)} \inf _{x \in B_{2r}(p_i)} d(f_i (x),y) = 0 .       \]
    \item  For each $r > 0 $, one has
    \[  \lim_{i \to \infty }   \sup _{\substack{ g \in G_i \\ d (g p_i, p_i ) < r }} \sup _{x \in B_{r}(p_i)}   d( f_i ( g  x ) , \phi_i (g) f_i (x) )  = 0 .   \]
    \item For each $g \in G$, there are $g_i \in G_i$ with $\limsup_{i \to \infty }d(g_ip_i,p_i ) < \infty$ and   
\[         \lim_{i \to \infty} \sup _{x \in B_r(p)} d(g x, \phi_i (g_i) x )  = 0        \]
for each $r > 0 $. 
\end{itemize}
Indeed, if $d_{eGH}((X_i,G_i,p_i), (X,G,p)) < \varepsilon \leq 1/ 5 $ and one has an $\varepsilon$-Hausdorff approximation between the triples $(X_i,G_i,p_i)$ and $(X,G,p)$, then one can define $f_i$ by sending $x \in B_{1/ \varepsilon}(p_i)$ to a point $y \in B_{1/ \varepsilon }(p)$ with $\overline{d} (x,y) < \varepsilon$, and sending $X_i \backslash B_{1/ \varepsilon} (p_i)$ to $p$. The last condition can be verified by setting $g_i : = \psi_i (g)$ with $\psi_i : G \to G_i$ coming from the definition of $\varepsilon$-Hausdorff approximation. 

Conversely, if the triples $(X_i,G_i,p_i)$ and $(X,G,p)$ are such that there are functions $f_i : X_i \to X$, $\phi_i : G_i \to G$ satisfying the above properties, then 
\[ (X_i,G_i,p_i) \xrightarrow{eGH}(X,G,p) . \] 

A feature of this framework is the following compactness result. 

\begin{Thm}[Fukaya--Yamaguchi \cite{fukaya-yamaguchi}]\label{thm:fy}
    Let $(X_i,p_i)$ be a sequence of pointed proper metric spaces, and $G_i \leq \iso (X_i)$ a sequence of closed groups. If the sequence $(X_i,p_i)$ converges in the Gromov--Hausdorff sense to a pointed proper metric space $(X,p)$, then after passing to a subsequence, there is $G\leq \iso (X)$ such that 
    \[      (X_i,G_i,p_i) \xrightarrow{eGH} (X,G,p) .              \]
\end{Thm}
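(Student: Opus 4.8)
The plan is to realize the given pointed Gromov--Hausdorff convergence concretely in a fixed ambient space, extract limit isometries by a compactness argument resting on Blaschke's Theorem~\ref{thm:blaschke}, and then identify $G$ with the collection of all such limits.

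\textbf{Realization and limit isometries.} First I would use the standard fact that a pointed GH-convergent sequence can be realized in a common ambient space: there is a proper metric space $Z$ together with isometric embeddings $X_i, X \hookrightarrow Z$ with $p_i \to p$ and $\overline{B}_R^{X_i}(p_i) \to \overline{B}_R^{X}(p)$ in the Hausdorff distance of $Z$ for every $R>0$. Identifying all spaces with their images in $Z$, I would then show: if $g_i \in G_i$ satisfy $r:=\sup_i d(g_i p_i, p_i) < \infty$, then a subsequence of the restricted graphs $\Gamma_{g_i}^R := \{(x, g_i x) : x \in \overline{B}_R(p_i)\}$ converges, for every $R \in \mathbb{N}$, to a closed set which is the graph of an isometry $g \in \iso(X)$ (written $g_i \to g$). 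Convergence holds because for $i$ large each $\Gamma_{g_i}^R$ lies in a fixed compact subset of $Z \times Z$, so Theorem~\ref{thm:blaschke} and a diagonal argument over $R$ produce the limit; single-valuedness follows since isometries are $1$-Lipschitz, the domain is all of $X$ because the balls converge, distances pass to the limit, and surjectivity follows by running the same argument for $g_i^{-1}$.

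\textbf{The limit group.} To fit these limits into a \emph{single} subsequence and a \emph{single} closed subgroup, I would run a nested hyperspace argument. For each $R \in \mathbb{N}$ set $G_i^R := \{g \in G_i : d(g p_i, p_i) \le R\}$, a compact subset of $\iso(X_i)$, and let $\mathcal{S}_i^R := \{\Gamma_g^R : g \in G_i^R\}$, a compact subset of $\mathfrak{M}(\overline{B}_{4R}(p) \times \overline{B}_{4R}(p))$ and hence a point of the metric space $\mathfrak{M}(\mathfrak{M}(\overline{B}_{4R}(p)^2))$, which is compact by two applications of Theorem~\ref{thm:blaschke}. Passing to a subsequence and diagonalizing over $R$, I may assume $\mathcal{S}_i^R \to \mathcal{S}^R$ for all $R$, and define $G$ to be the set of $g \in \iso(X)$ with $\Gamma_g^R \in \mathcal{S}^R$ for some (equivalently, all) $R > d(gp,p)$. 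Unwinding Hausdorff convergence of the $\mathcal{S}_i^R$ shows (i) every $g \in G$ is a limit $g_i \to g$ with $g_i \in G_i$ of bounded displacement, and (ii) every subsequential limit of a bounded-displacement sequence from the $G_i$ lies in $G$. From (i)--(ii) one checks that $G$ is a subgroup (products and inverses of bounded-displacement sequences are again such, with compatible graph limits) and that $G$ is closed in $(\iso(X), d_p)$, since $d_p$-convergence of isometries of bounded displacement forces Hausdorff convergence of the restricted graphs and each $\mathcal{S}^R$ is closed. Hence $G \le \iso(X)$.

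\textbf{Equivariant convergence.} Finally I would produce, for each small $\varepsilon>0$ and all large $i$, an $\varepsilon$-Hausdorff approximation between $(X_i, G_i, p_i)$ and $(X, G, p)$: take $\overline{d}$ to be the metric inherited from $Z$ (valid once $d_H$ of the relevant balls and $d(p_i,p)$ are below $\varepsilon$). For $\psi : G \to G_i$ I would set $\psi(g) := g_i$ using a fixed sequence with $g_i \to g$ from (i); the near-equivariance $|\overline{d}(x,y) - \overline{d}(\psi(g)x, gy)| < \varepsilon$ holds because $g_i \to g$ forces $g_i x$ to lie within $\varepsilon$ of $g$ applied to the point of $X$ close to $x$. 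For $\phi : G_i \to G$ I would send $g$ with $d(g p_i, p_i) < 1/(3\varepsilon)$ to any $\tilde g \in G$ which is $\varepsilon$-close to $g$ on $\overline{B}_{1/(3\varepsilon)}(p_i)$, and send larger-displacement $g$ to $e$; such $\tilde g$ exists for $i$ large, by contradiction: a would-be sequence of counterexamples has, by the first step, a subsequential limit $g^* \in \iso(X)$, which lies in $G$ by (ii), contradicting that no such $\tilde g = g^*$ works. The remaining distortion inequalities are routine. I expect the main obstacle to be the middle step: arranging, along one subsequence, a limit set $G$ that is simultaneously a group, closed in $d_p$, and ``complete'' in the sense of (i)--(ii); the iterated Blaschke compactness is precisely what makes this possible without passing to ultralimits.
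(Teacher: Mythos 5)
The paper does not prove this statement --- it is quoted verbatim from Fukaya--Yamaguchi as a known precompactness result --- and your proposal is essentially the standard proof of it: realize the convergence in a common ambient space, extract limit isometries via Arzel\`a--Ascoli/Blaschke, and take $G$ to be the set of all limits of bounded-displacement sequences, with the iterated-hyperspace diagonal argument being a clean way to organize the usual diagonalization. One caveat: the parenthetical ``for some (equivalently, all) $R > d(gp,p)$'' in your definition of $G$ is not literally correct, since an isometry of a general proper metric space is not determined by its restriction to a ball, so $\Gamma_g^R \in \mathcal{S}^R$ for one $R$ only forces $g$ to agree on $B_R(p)$ with a genuine limit; you should define $G$ by requiring $\Gamma_g^R \in \mathcal{S}^R$ for \emph{all} $R$, after which a diagonal choice of approximants shows every such $g$ is a genuine limit, and the group, closedness, and approximation properties go through as you describe.
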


A main ingredient of the above theorem is the following compactness property.

\begin{Pro}[\cite{gromov}]\label{pro:isometry-lemma}
    Let $g_i \in G_i$  be a sequence with 
    \[     \limsup _{i \to \infty} d(g_i p_i , p_i ) < \infty .  \] 
    Then after passing to a subsequence, $\phi_i (g_i)$ converges to an element $g \in G$.
\end{Pro}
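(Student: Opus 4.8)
The plan is to show that the sequence $\bigl(\phi_i(g_i)\bigr)_i$ remains in a bounded, hence precompact, region of the proper metric space $(\iso(X),d_p)$, and then to identify the limit of a convergent subsequence as an element of $G$, using that $G$ is a \emph{closed} subgroup of $\iso(X)$.

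First I would exploit the hypothesis $\limsup_{i\to\infty} d(g_i p_i,p_i)<\infty$ to fix $D>0$ with $d(g_i p_i,p_i)\le D$ for all sufficiently large $i$; since subconvergence only concerns the tail of the sequence, I may assume this bound holds for every $i$ under consideration. Let $f_i:X_i\to X$ and $\phi_i:G_i\to G$ be the functions provided by eGH convergence, so in particular $d(f_i(p_i),p)\to 0$. Applying the triangle inequality to the point $\phi_i(g_i)p$ and using that $\phi_i(g_i)$ is an isometry of $X$ gives
\[ d(\phi_i(g_i)p,\,p)\le d(f_i(p_i),p) + d\bigl(\phi_i(g_i)f_i(p_i),\,f_i(g_ip_i)\bigr) + d\bigl(f_i(g_ip_i),\,f_i(p_i)\bigr) + d(f_i(p_i),p). \]
The first and last terms tend to $0$. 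The second term tends to $0$ by the almost-equivariance property of $f_i$ applied on $B_{D+1}(p_i)$ (legitimate because $d(g_ip_i,p_i)\le D<D+1$). For the third term, the almost-isometry property of $f_i$ on $B_{D+1}(p_i)$ yields $d\bigl(f_i(g_ip_i),f_i(p_i)\bigr)\le d(g_ip_i,p_i)+o(1)\le D+o(1)$. Hence $\limsup_{i\to\infty} d(\phi_i(g_i)p,p)\le D<\infty$.

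Next I would convert this displacement bound at $p$ into a bound for the metric $d_p$. For any $h\in\iso(X)$ and $x\in B_r(p)$ one has $d(hx,x)\le d(hx,hp)+d(hp,p)+d(p,x)=2\,d(x,p)+d(hp,p)\le 2r+d(hp,p)$, so
\[ d_p(h,e)=\inf_{r>0}\Bigl(\tfrac1r+\sup_{x\in B_r(p)}d(hx,x)\Bigr)\le\inf_{r>0}\bigl(\tfrac1r+2r+d(hp,p)\bigr)=2\sqrt{2}+d(hp,p). \]
Taking $h=\phi_i(g_i)$ shows the sequence $d_p(\phi_i(g_i),e)$ is bounded. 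Since $(\iso(X),d_p)$ is proper, closed bounded sets are compact, so after passing to a subsequence $\phi_i(g_i)\to g$ in $\iso(X)$. As $G$ is a closed subgroup of $\iso(X)$ containing every $\phi_i(g_i)$, the limit satisfies $g\in G$, which is the assertion.

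I expect no essential obstacle here: the proof is a direct bookkeeping with the defining properties of eGH convergence together with the properness of $\iso(X)$. The only point requiring a little care is to commit, once and for all, to a single radius (here $D+1$) independent of $i$ when invoking the almost-isometry and almost-equivariance of $f_i$; this is exactly what the uniform displacement bound $d(g_i p_i,p_i)\le D$ makes possible. One could alternatively take $g$ to be a limit of the elements $g_i':=\psi_i(g)$ coming from the $\varepsilon$-Hausdorff approximations, but extracting $g$ directly from properness of $\iso(X)$ seems cleanest.
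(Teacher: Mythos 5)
Your argument is correct and complete. The paper itself gives no proof of this proposition (it is stated with a citation to Gromov), and what you have written is exactly the standard argument one would supply: the displacement bound at $p_i$ transfers via the almost-isometry and almost-equivariance of $f_i$ to a bound on $d(\phi_i(g_i)p,p)$, hence on $d_p(\phi_i(g_i),e)$, and properness of $(\iso(X),d_p)$ together with closedness of $G$ yields the subconvergent limit in $G$. Your care in fixing a single radius $D+1$ before invoking the eGH properties is exactly the right bookkeeping.
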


The following result establishes that the maps $\phi_i$ are almost distance-preserving.

\begin{Pro}[\cite{zamora}]\label{pro:egh-is-pgh}
    For any pair of sequences $g_i, h_i \in G_i$ with 
    \[     \limsup _{i \to \infty} \max \{  d(g_i p_i , p_i ) , d(h_ip_i, p_i) \} < \infty ,   \] 
    one has 
    \[     \lim_{i \to \infty} \vert d_{p_i} (g_i,h_i) - d_p (\phi_i (g_i), \phi_i (h_i )) \vert  = 0  .    \]
\end{Pro}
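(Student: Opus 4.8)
The plan is to prove the two one-sided bounds
\[
\limsup_{i\to\infty}\big(d_p(\phi_i(g_i),\phi_i(h_i))-d_{p_i}(g_i,h_i)\big)\le 0,\qquad \limsup_{i\to\infty}\big(d_{p_i}(g_i,h_i)-d_p(\phi_i(g_i),\phi_i(h_i))\big)\le 0
\]
directly, using the maps $f_i:X_i\to X$ and $\phi_i:G_i\to G$ furnished by the equivariant Gromov--Hausdorff convergence; no compactness argument will be needed. Set $D:=1+\limsup_i\max\{d(g_ip_i,p_i),d(h_ip_i,p_i)\}<\infty$ and, for $r>0$, write $\delta_i(r):=\sup_{x\in B_r(p_i)}d(g_ix,h_ix)$ and $\bar\delta_i(r):=\sup_{y\in B_r(p)}d(\phi_i(g_i)y,\phi_i(h_i)y)$, so that $d_{p_i}(g_i,h_i)=\inf_{r>0}\{\tfrac1r+\delta_i(r)\}$ and $d_p(\phi_i(g_i),\phi_i(h_i))=\inf_{r>0}\{\tfrac1r+\bar\delta_i(r)\}$. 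The functions $\delta_i,\bar\delta_i$ are non-decreasing; by the triangle inequality $d(g_ix,h_ix)\le 2d(x,p_i)+2D$ on $B_r(p_i)$ for $i$ large, and likewise on the $X$--side once one checks $d(\phi_i(g_i)p,p)\le D$ for $i$ large (which follows from the equivariance and almost-isometry properties of $f_i$ together with $d(f_i(p_i),p)\to 0$). Hence $d_{p_i}(g_i,h_i)$ and $d_p(\phi_i(g_i),\phi_i(h_i))$ are bounded above by a constant $C=C(D)$, so any radius realizing either infimum up to additive error $\le 1$ must be $\ge s_0:=1/(C+1)$.

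The core step is a uniform local comparison of the displacement functions: for every $\varepsilon>0$ and $R>0$ there is $i_0$ with
\[
\bar\delta_i(r-\varepsilon)\le\delta_i(r)+\varepsilon \qquad\text{and}\qquad \delta_i(r-\varepsilon)\le\bar\delta_i(r)+\varepsilon \qquad\text{for all }i\ge i_0,\ r\in[s_0/2,R].
\]
For the first inequality, fix $y\in B_{r-\varepsilon}(p)$; almost-surjectivity of $f_i$ gives $x\in B_{2R}(p_i)$ with $d(f_i(x),y)$ as small as we like, and then the almost-isometry of $f_i$ on $B_{2R}(p_i)$ together with $d(f_i(p_i),p)\to 0$ forces $x\in B_r(p_i)$ for $i$ large. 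Inserting $\phi_i(g_i)f_i(x),\ f_i(g_ix),\ f_i(h_ix),\ \phi_i(h_i)f_i(x)$ into $d(\phi_i(g_i)y,\phi_i(h_i)y)$, the two terms of the form ``an isometry applied to $y$ versus to $f_i(x)$'' are each $\le d(y,f_i(x))$; the terms $d(\phi_i(g_i)f_i(x),f_i(g_ix))$ and $d(\phi_i(h_i)f_i(x),f_i(h_ix))$ tend to $0$ by the equivariance property applied with radius $\rho:=R+D+2$ (legitimate since $d(g_ip_i,p_i),d(h_ip_i,p_i)\le D<\rho$ and $x\in B_R(p_i)\subset B_\rho(p_i)$); and $d(f_i(g_ix),f_i(h_ix))\le d(g_ix,h_ix)+o_i(1)\le\delta_i(r)+o_i(1)$ by the almost-isometry of $f_i$ on $B_{R+D+1}(p_i)$, which contains $g_ix$ and $h_ix$. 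Summing and taking the supremum over $y$ gives the first inequality; the second is obtained symmetrically by pushing a point $x\in B_{r-\varepsilon}(p_i)$ forward to $f_i(x)\in B_r(p)$. All error terms are uniform in $r\in[s_0/2,R]$, because each defining condition of equivariant Gromov--Hausdorff convergence invoked here is monotone in its radius parameter, so it is enough to apply it once, with the largest radius $\rho=R+D+2$ in play.

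To conclude, suppose $L:=\limsup_i\big(d_p(\phi_i(g_i),\phi_i(h_i))-d_{p_i}(g_i,h_i)\big)>0$; pass to a subsequence along which the difference exceeds $L/2$ and choose $r_i>0$ with $\tfrac1{r_i}+\delta_i(r_i)<d_{p_i}(g_i,h_i)+L/10$, so that $r_i\ge s_0$. Fix $R$ large and $\varepsilon\in(0,s_0/2)$ small, to be specified, and set $\hat r_i:=\min(r_i,R)\in[s_0,R]$; since $\delta_i$ is non-decreasing and $\tfrac1{\hat r_i}\le\tfrac1{r_i}+\tfrac1R$, we get $\tfrac1{\hat r_i}+\delta_i(\hat r_i)<d_{p_i}(g_i,h_i)+L/10+1/R$. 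Applying the local comparison at radius $\hat r_i$ and using $\tfrac1{\hat r_i-\varepsilon}\le\tfrac1{\hat r_i}+\tfrac{\varepsilon}{s_0(s_0-\varepsilon)}$ gives, for $i$ large,
\[
d_p(\phi_i(g_i),\phi_i(h_i))\le\tfrac1{\hat r_i-\varepsilon}+\bar\delta_i(\hat r_i-\varepsilon)\le\tfrac1{\hat r_i}+\delta_i(\hat r_i)+\varepsilon+\tfrac{\varepsilon}{s_0(s_0-\varepsilon)}<d_{p_i}(g_i,h_i)+\tfrac{L}{10}+\tfrac1R+\varepsilon+\tfrac{\varepsilon}{s_0(s_0-\varepsilon)},
\]
and the right-hand side is $<d_{p_i}(g_i,h_i)+L/2$ once $R$ is taken large enough and $\varepsilon$ small enough, contradicting the choice of subsequence; the reverse one-sided bound follows the same way with the two sides exchanged. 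The main obstacle is precisely this final passage: the displacement functions $\delta_i,\bar\delta_i$ need not be continuous or equicontinuous (Lemma \ref{lem:theta-continuity} bounds the discontinuities of each one individually, but that is not needed here), and the infimum defining $d_{p_i}$ ranges over an unbounded set of radii. This is handled by the uniform lower bound $s_0$ on near-optimal radii, by truncating $r_i$ at a large $R$ (which costs only $1/R$), and by evaluating $\bar\delta_i$ at the slightly smaller radius $\hat r_i-\varepsilon$ so that monotonicity of $\delta_i$ works in the favorable direction.
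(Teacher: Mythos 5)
The paper does not prove this proposition: it is imported from \cite{zamora} with only a citation, so there is no in-paper argument to compare against. Your proof is correct and is the natural direct argument for this statement: the two one-sided comparisons of the displacement functions $\delta_i$, $\bar\delta_i$ via the approximation maps $f_i$ and $\phi_i$, combined with the uniform positive lower bound $s_0$ on near-optimal radii and the truncation of $r_i$ at a large $R$, which is exactly how one tames the unbounded infimum in the definition of $d_p$ without any continuity of $\delta_i$. One cosmetic slip: choosing $r_i$ with $\tfrac1{r_i}+\delta_i(r_i)<d_{p_i}(g_i,h_i)+L/10$ only gives $\tfrac1{r_i}\le C+L/10$, and since $L$ need not be $\le 1$ (one only knows $L\le C$), the bound $r_i\ge 1/(C+1)$ should be replaced by, say, $r_i\ge 1/(2C)$ with $s_0$ adjusted accordingly (or use $\min(L,1)/10$ as the error tolerance); nothing else in the argument is affected.
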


The following proposition shows that the maps  $\phi_i$ are almost group homomorphisms. 

\begin{Pro}\label{pro:almost-morphism}
    Let $(X,p) $, $(Y,q)$ be pointed proper metric spaces, $G \leq \iso (X)$, $H\leq \iso (Y)$ closed groups, $\varepsilon \in (0, 1/5 ] $, and $(\overline{d}, \phi , \psi ) $ an $\varepsilon$-Hausdorff approximation between the triples $(X,G,p) $ and $(Y,H,q)$. Then for any $g_1,g_2 \in G$ with $d(g_1p,p), d(g_2p,p)  <  1/ 6 \varepsilon $, one has
    \[      d (  \phi (g_1 g_2 )y, \phi (g_1) \phi(g_2) y   )  < 7 \varepsilon      \]
    for all $y \in B_{1 / 12 \varepsilon }(q)$. In particular, 
    \[      d_q (\phi (g_1g_2), \phi(g_1)\phi(g_2)) < 20 \varepsilon.                \]
\end{Pro}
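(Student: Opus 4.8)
The plan is to transport the computation of $\overline{d}(\phi(g_1g_2)y,\phi(g_1)\phi(g_2)y)$ from $Y$ to $X$ using the correspondence between a neighbourhood of $p$ and a neighbourhood of $q$ that is built into an $\varepsilon$-Hausdorff approximation, and then to apply the approximate equivariance $\vert\overline{d}(x,y)-\overline{d}(gx,\phi(g)y)\vert<\varepsilon$ three times. Only the datum $\phi$ (not $\psi$) will be needed.

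First I would fix $y\in B_{1/12\varepsilon}(q)$. Since $B_{1/12\varepsilon}(q)\subset B_{1/\varepsilon}(q)$, the density condition furnishes a point $x\in X$ with $\overline{d}(x,y)<\varepsilon$. Next comes the bookkeeping that legitimises the equivariance inequality at the pairs we will use: combining $\overline{d}(p,q)<\varepsilon$, $\overline{d}(x,y)<\varepsilon$, $\overline{d}(y,q)<1/12\varepsilon$, and $d(g_1p,p),d(g_2p,p)<1/6\varepsilon$ with the triangle inequality, one checks that $x$ and $g_2x$ lie in $B_{1/3\varepsilon}(p)$, that $y$ and $\phi(g_2)y$ lie in $B_{1/3\varepsilon}(q)$, and that $d(g_1g_2p,p)\le d(g_1p,p)+d(g_2p,p)<1/3\varepsilon$. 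This is the only place where the precise constants $1/6\varepsilon$ and $1/12\varepsilon$ are used, and I expect it to be the main obstacle: for $\varepsilon$ close to $1/5$ the point $g_2x$ — and hence $\phi(g_2)y$ — only barely remains inside the ball of radius $1/3\varepsilon$, so the estimates here have to be run carefully.

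With this in hand I would invoke the approximate equivariance three times:
\begin{itemize}
\item with $g_1g_2$ at $(x,y)$:\ $\overline{d}((g_1g_2)x,\phi(g_1g_2)y)<\overline{d}(x,y)+\varepsilon<2\varepsilon$;
\item with $g_2$ at $(x,y)$:\ $\overline{d}(g_2x,\phi(g_2)y)<\overline{d}(x,y)+\varepsilon<2\varepsilon$;
\item with $g_1$ at $(g_2x,\phi(g_2)y)$:\ $\overline{d}(g_1(g_2x),\phi(g_1)\phi(g_2)y)<\overline{d}(g_2x,\phi(g_2)y)+\varepsilon<3\varepsilon$.
\end{itemize}
Since $g_1(g_2x)=(g_1g_2)x$ and $\overline{d}$ restricts to $d$ on $Y$, the triangle inequality gives
\[
d(\phi(g_1g_2)y,\phi(g_1)\phi(g_2)y)\le\overline{d}(\phi(g_1g_2)y,(g_1g_2)x)+\overline{d}((g_1g_2)x,\phi(g_1)\phi(g_2)y)<2\varepsilon+3\varepsilon=5\varepsilon<7\varepsilon,
\]
which is the assertion. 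The gap between the $5\varepsilon$ obtained here and the stated $7\varepsilon$ is comfortable slack: it absorbs any coarser bookkeeping one might prefer in the middle paragraph — for example, replacing $\phi(g_2)y$ by a nearby point of $B_{1/3\varepsilon}(q)$ before applying the third inequality. Apart from that bookkeeping step, the argument is a routine chain of triangle inequalities.
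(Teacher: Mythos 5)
Your argument is essentially the paper's: choose $x$ close to $y$ via the density condition, apply the approximate equivariance three times, and chain triangle inequalities. The paper's only variation is that, instead of applying equivariance of $g_1$ at the pair $(g_2x,\phi(g_2)y)$, it inserts an auxiliary point $y'\in Y$ with $\overline{d}(g_2x_0,y')<\varepsilon$, applies equivariance at $(g_2x_0,y')$, and then uses that $\phi(g_1)$ is an isometry of $Y$; this costs two extra $\varepsilon$'s, which is why it lands on $7\varepsilon$ rather than your $5\varepsilon$. One caveat on the step you rightly single out as the main obstacle: the membership $\phi(g_2)y\in B_{1/3\varepsilon}(q)$ needed for your third application does not actually follow for $\varepsilon$ near $1/5$ (the best available bound is $d(\phi(g_2)y,q)<3\varepsilon+1/4\varepsilon$, which is below $1/3\varepsilon$ only for $\varepsilon<1/6$); the paper's proof silently needs the analogous check for $y'$ and fares no better, so this is a looseness in the stated range of $\varepsilon$ common to both arguments and harmless in the applications, where $\varepsilon\to 0$.
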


\begin{proof}
    Pick $x_0 \in B_{1/ \varepsilon }(p)$ with $\overline{d}(x_0,y) < \varepsilon$, and $y' \in B_{1/ \varepsilon }(q)$ with $\overline{d}(g_2 x_0, y') < \varepsilon$. Then
    \begin{align*}
        d (\phi(g_1g_2)y, \phi(g_1)\phi(g_2)y) & \leq  \overline{d} (\phi (g_1g_2)y, g_1g_2 x_0) + \overline{d}(g_1g_2x_0, \phi(g_1) y' ) + \overline{d} (\phi (g_1)y', \phi(g_1 )\phi(g_2)y)\\
        & \leq  \overline{d}(y,x_0) + \overline{d} (g_2x_0, y' ) + \overline{d}(y', \phi(g_2)y) + 2 \varepsilon \\
        & \leq  \overline{d}(y' , g_2x_0) + \overline{d}(g_2x_0, \phi(g_2) y) + 4 \varepsilon \\
        & \leq  \overline{d}(x_0, y) + 6 \varepsilon  \leq 7 \varepsilon .
    \end{align*}
\end{proof}

\begin{Pro}\label{pro:small}
    Let $(X_i,p_i)$ be a sequence of pointed proper metric spaces, and $G_i \leq \iso (X_i)$ a sequence of closed groups. Assume
\[     (X_i, G_i, p_i) \xrightarrow{eGH} (X,G,p)  .                       \]
If $H_i \trianglelefteq G_i$ is a sequence of small normal subgroups, then 
\begin{equation}\label{eq:small-convergence}
    (X_i / H_i , G_i / H_i , [p_i]) \xrightarrow{eGH} (X, G, p),   
\end{equation}  
and for any sequence of small subgroups $K_i \leq G_i / H_i \leq \iso (X_i / H_i )$, the sequence of preimages $\tilde{K}_i \leq G_i$ also consists of small subgroups. 
\end{Pro}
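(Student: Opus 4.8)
The plan is to handle the two assertions separately; the first says that quotienting by a small normal subgroup is invisible to equivariant Gromov--Hausdorff convergence, and the second is a direct estimate with the relevant smallness moduli. I would prove \eqref{eq:small-convergence} by showing
\[ d_{eGH}\bigl((X_i,G_i,p_i),\,(X_i/H_i,G_i/H_i,[p_i])\bigr) \longrightarrow 0 \]
and then invoking the convergence $(X_i,G_i,p_i)\xrightarrow{eGH}(X,G,p)$ together with the triangle inequality (composition of Hausdorff approximations). Write $\pi_i\colon X_i\to X_i/H_i$ for the projection. Since $H_i\leq\iso(X_i)$ is closed it acts properly, hence with closed orbits, so the quotient metric $d_{X_i/H_i}([x],[y])=\inf_{h\in H_i}d(x,hy)$ is a genuine metric on the proper space $X_i/H_i$, the map $\pi_i$ is $1$-Lipschitz, and $\pi_i$ carries balls onto balls. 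I would equip $X_i\sqcup(X_i/H_i)$ with the metric $\overline d_i$ restricting to the given metrics on each piece and with $\overline d_i(x,[y]):=\inf_{h\in H_i}d(x,hy)$ (the triangle inequality is routine), and take as associated maps the quotient homomorphism $q_i\colon G_i\to G_i/H_i$, $g\mapsto[g]$, and any section $s_i\colon G_i/H_i\to G_i$ of $q_i$. Then $(\overline d_i,q_i,s_i)$ is an $\varepsilon$-Hausdorff approximation whenever $\sup_{h\in H_i}\sup_{x\in B_{1/\varepsilon}(p_i)}d(hx,x)<\varepsilon$: the density and basepoint conditions hold with error $\sup_{h\in H_i}\sup_{x\in B_{1/\varepsilon}(p_i)}d(hx,x)$ (since $\overline d_i(x,[x])\le\sup_{h\in H_i}d(hx,x)$ and $\pi_i$ is surjective on balls), while the equivariance conditions hold \emph{exactly}, because normality of $H_i$ gives $\inf_{h\in H_i}d(gx,hgy)=\inf_{h\in H_i}d(gx,g(g^{-1}hg)y)=\inf_{h'\in H_i}d(x,h'y)$, hence $\overline d_i(gx,[g][y])=\overline d_i(x,[y])$ and likewise $\overline d_i(s_i([h])x,[h]y)=\overline d_i(x,[y])$ for any representative. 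As $H_i$ is small, $\sup_{h\in H_i}\sup_{x\in B_r(p_i)}d(hx,x)\to0$ for each fixed $r$, so the displayed $d_{eGH}$ tends to $0$.

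For the second assertion, let $q_i\colon G_i\to G_i/H_i$ be as above, so $\tilde K_i=q_i^{-1}(K_i)$ is a closed subgroup of $G_i$ containing $H_i$. Fix $r>0$ and set $\delta_i:=\sup_{k\in K_i}\sup_{[y]\in B_r([p_i])}d(k[y],[y])$ and $\eta_i:=\sup_{h\in H_i}\sup_{x\in B_r(p_i)}d(hx,x)$; both tend to $0$ because $K_i$ and $H_i$ are small. Given $g\in\tilde K_i$ and $x\in B_r(p_i)$, we have $[g]=q_i(g)\in K_i$ and $\pi_i(x)=[x]\in B_r([p_i])$, so
\[ \inf_{h\in H_i}d(gx,hx)=d([gx],[x])=d([g][x],[x])\le\delta_i. \]
Choosing $h=h(g,x)\in H_i$ with $d(gx,hx)<\delta_i+\tfrac1i$, we get
\[ d(gx,x)\le d(gx,hx)+d(hx,x)<\delta_i+\tfrac1i+\eta_i. \]
Taking the supremum over $g\in\tilde K_i$ and $x\in B_r(p_i)$ and letting $i\to\infty$ shows $\sup_{g\in\tilde K_i}\sup_{x\in B_r(p_i)}d(gx,x)\to0$; since $r>0$ was arbitrary, $\tilde K_i$ consists of small subgroups.

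I do not anticipate a genuine obstacle: the whole argument is bookkeeping with the two smallness moduli $\delta_i$ and $\eta_i$. The two points that require a little care are the standing facts that $H_i$ acts properly on $X_i$ (so that $X_i/H_i$ is an honest proper metric space, the formula $\inf_{h}d(x,hy)$ defines its metric, and balls in $X_i$ map onto balls in $X_i/H_i$, allowing the choice of representatives of controlled displacement), and the normality $H_i\trianglelefteq G_i$, which is used both to make $G_i/H_i$ act by isometries on $X_i/H_i$ and, crucially, to upgrade the equivariance estimates in the Hausdorff approximation from inequalities to exact equalities.
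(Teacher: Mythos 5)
Your proposal is correct and follows essentially the same route as the paper: the quotient projections $X_i\to X_i/H_i$, $G_i\to G_i/H_i$ realize the eGH closeness of the two triples, and the second assertion is the same estimate bounding $d(gx,x)$ by the quotient distance $d([g][x],[x])$ plus the $H_i$-orbit displacement. Your version merely spells out the Hausdorff approximation explicitly (note only that $\overline d_i(x,[x])=0$ makes $\overline d_i$ a pseudometric, fixed as usual by adding a small constant), which the paper leaves implicit.
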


\begin{proof}
    By definition of small subgroups, the $H_i$-orbits in $X_i$ that intersect $B_r(p_i)$ with $r > 0 $ fixed have diameter going to $0$ as $i \to \infty$, so the projections $X_i \to X_i/H_i$ and $G_i \to G_i / H_i $ show that the equivariant Gromov--Hausdorff distance between the triples $(X_i, G_i, p_i)$ and $(X_i/H_i, G_i/H_i, [p_i])$ goes to $0$ as $i \to \infty$, so  \eqref{eq:small-convergence} holds.

    Let $K_i\leq G_i / H_i$ be a sequence of small subgroups, and let $\tilde{K}_i \leq G_i$ be the corresponding preimages. Fix $r>0$ and pick elements $k _i \in \tilde{K}_i $, $x_i \in B_r(p_i)$. By the definition of quotient metric, we have
    \[         d_{X_i}(k_i x_i, x_i) \leq d_{X_i/H_i}( k_i [x_i], [x_i]  ) + \diam _{X_i}[x_i] + \diam _{X_i}[k_i x_i]  , \]
    where $[x_i] \subset X_i$ stands for the $H_i$-orbit of $x_i$.         Since the groups $K_i$ are small, the first summand on the right hand side goes to $0$ as $i \to \infty$, and since the groups $H_i$ are small, the other two summands on the right go to $0$ as well. This shows that the groups $\tilde{K}_i$ are small.
\end{proof}

\subsection{Properties of good approximations}

In this section we cover the technical properties of good approximations. We first note that they are stable under minor perturbations. 

\begin{Obs}\label{obs:perturbation}
    Let $G_i$ be a sequence of locally compact Hausdorff groups and $G$ a locally compact Hausdorff group. Let $A_i \subset G_i $ and $A \subset G$ be open pre-compact subsets and $\phi_i , \phi ' _i : G_i \to G $ two sequences of functions. Assume that for each $n \in \mathbb{N}$ and each open symmetric set $U \subset G$, there is $i_0 \in \mathbb{N}$ such that for $i \geq i_0$ one has 
    \[          \phi _i (g) ^{-1} \phi _i ' (g)  \in U         \]
    for all $g \in A_i ^n$.  If the sequence $\phi_i$ consists of good approximations with regular neighborhoods $A_i $ and $A $, then so does the sequence $\phi_i'$.  
\end{Obs}

The proof is straightforward since the verification of each property in the definition of good approximations for $\phi_i'$ essentially follows from the corresponding one for $\phi_i$.  

The following compactness property is an easy consequence of the definitions and will be used without reference. 

\begin{Lem}\label{lem:convergent-sequences}
    Let $\phi_i : G_i \to G$ be a sequence of good approximations with regular neighborhoods $A_i \subset G_i $ and $ A  \subset G$ and assume $G$ is metrizable. Then for each $n \in \mathbb{N}$ and any sequence $g_i \in A_i^n$, after passing to a subsequence, there is $g \in \overline{A}^n$ with $\phi_i (g_i) \to g$. Moreover, for each $m \in \mathbb{N}$, one has $\phi_i (g_i ^m) \to g^m$. In particular, $\phi_i (e_{G_i} ) \to e_G$. 
\end{Lem}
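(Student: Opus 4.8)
The plan is to reduce everything to the case $n=1$ and then bootstrap using the asymptotic multiplicativity in \eqref{item:ga-3}. For $n=1$: given $g_i\in A_i$, property \eqref{item:ga-2} says that $\phi_i(A_i)$ eventually lies inside any prescribed open neighborhood of the compact set $\overline{A}$; choosing one such neighborhood to be pre-compact shows that $(\phi_i(g_i))_i$ is eventually contained in a fixed compact set, so after passing to a subsequence it converges to some $g\in G$. Since $\overline{A}$ equals the intersection of its open neighborhoods and $\phi_i(g_i)$ enters each of them for $i$ large (again by \eqref{item:ga-2}), the limit satisfies $g\in\overline{A}$. I expect this to be the only genuinely non-formal point; the one slightly delicate step is extracting a convergent subsequence, which uses that the relevant compact set is metrizable — automatic in all situations of interest, e.g. when $G$ is a closed group of isometries of a proper metric space.

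For general $n$, I would write $g_i=g_i^{(1)}\cdots g_i^{(n)}$ with each $g_i^{(j)}\in A_i$, apply the case $n=1$ to each factor, and pass to a common subsequence so that $\phi_i(g_i^{(j)})\to h^{(j)}\in\overline{A}$ for $j=1,\dots,n$. Then I would show by induction on $j$ that $\phi_i(g_i^{(1)}\cdots g_i^{(j)})\to h^{(1)}\cdots h^{(j)}$, the case $j=1$ being the previous step. For the inductive step, $g_i^{(1)}\cdots g_i^{(j)}\in A_i^{j}\subset A_i^{n}$ and $g_i^{(j+1)}\in A_i\subset A_i^{n}$ (using that $A_i$ is symmetric, so $e_{G_i}\in A_i$), so \eqref{item:ga-3} applied with the integer $n$ gives
\[ \phi_i(g_i^{(1)}\cdots g_i^{(j+1)})^{-1}\,\phi_i(g_i^{(1)}\cdots g_i^{(j)})\,\phi_i(g_i^{(j+1)}) \longrightarrow e_G . \]
By the inductive hypothesis the product of the last two factors converges to $h^{(1)}\cdots h^{(j+1)}$, so continuity of multiplication and inversion in $G$ forces $\phi_i(g_i^{(1)}\cdots g_i^{(j+1)})\to h^{(1)}\cdots h^{(j+1)}$. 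Taking $j=n$ yields $\phi_i(g_i)\to g:=h^{(1)}\cdots h^{(n)}\in\overline{A}^{n}$, which is the first assertion.

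Finally, keeping the same subsequence, I would prove $\phi_i(g_i^{m})\to g^{m}$ by induction on $m$, the case $m=1$ being what was just shown. Since $g_i^{m-1}\in A_i^{n(m-1)}\subset A_i^{nm}$ and $g_i\in A_i^{n}\subset A_i^{nm}$, property \eqref{item:ga-3} with the integer $nm$ gives $\phi_i(g_i^{m})^{-1}\,\phi_i(g_i^{m-1})\,\phi_i(g_i)\to e_G$, and since $\phi_i(g_i^{m-1})\phi_i(g_i)\to g^{m-1}g=g^{m}$ by the inductive hypothesis, it follows that $\phi_i(g_i^{m})\to g^{m}$. The remaining assertion $\phi_i(e_{G_i})\to e_G$ is then the case $g_i:=e_{G_i}$ (legitimate since $e_{G_i}\in A_i$): the first assertion produces a subsequential limit $g\in\overline{A}$, and the power statement with $m=2$ forces $g=g^{2}$, hence $g=e_G$; as this holds along every subsequence, $\phi_i(e_{G_i})\to e_G$. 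Alternatively, this drops out of \eqref{item:ga-3} directly with $n=1$ and $g_i=h_i=e_{G_i}$, since then $\phi_i(e_{G_i})^{-1}\phi_i(e_{G_i})\phi_i(e_{G_i})=\phi_i(e_{G_i})$.
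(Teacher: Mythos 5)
Your proposal is correct and follows essentially the same route as the paper: property \eqref{item:ga-2} gives eventual containment in a compact set and forces the subsequential limit into $\overline{A}$, the case of general $n$ is handled by factoring $g_i$ into elements of $A_i$ and iterating \eqref{item:ga-3}, and the identity claim is deduced from the power statement with $m=2$ (the paper does exactly this). Your explicit remark about needing sequential compactness (metrizability) of the relevant compact set is a point the paper glosses over, and is correctly dispatched in all situations where the lemma is applied.
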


\begin{proof}
    First consider a sequence $g_i \in A_i$. By \eqref{item:ga-2}, one has $\phi_i (g_i) \in A^2$ for large enough $i$, so after passing to a subsequence, one has $\phi_i(g_i) \to g$ for some $g \in \overline{A}^2$. If $g \notin \overline{A}$, then one can find an open set $V \subset G$ with $\overline{A} \subset V$ but $g \notin \overline{V}$. This would imply $\phi_i (g_i ) \notin V$ for $i$ sufficiently large, contradicting  \eqref{item:ga-2}. 
    
    Now consider a sequence $g_i \in A_i^n$. We can write $g_i = h_{i,1} \cdots h_{i, n}$ with $h_{i,j} \in A_i$ for all $i$ and $j$. By the first part, after passing to a subsequence, there are $h_1, \ldots , h_n \in \overline{A}$ with  $  \phi_i(  h_{i,j}) \to h_j   $ for each $j$. Using \eqref{item:ga-3}, one has
    \[      \phi_i (g_i) \to h_1 \cdots h_n \in \overline{A}^n .  \]
    The fact that $\phi_i (g_i^m) \to g^m$ for every $m$ also follows from \eqref{item:ga-3}, and the last claim about the identity elements follows from the previous one with $m = 2$. 
\end{proof}

The following result is inspired by the third item of \cite[Definition 3.5]{breuillard-green-tao}.

\begin{Lem}\label{lem:approximation-by-internal}
    Let $\phi_i : G_i \to G$ be a sequence of good approximations with regular neighborhoods $A_i \subset G_i $ and $A \subset G$.  For all $K \subset U \subset A$ with $K$ compact and $U$ open, there are open sets $U_i \subset A_i  $ with 
        \[ \phi_i^{-1}(K) \cap A_i \subset U_i \subset \phi_i^{-1}(U)                 \]
        for $i$ large enough.  Moreover, if $U$ is symmetric, the sets $U_i$ can be taken to be symmetric.
\end{Lem}

\begin{proof} 
    Let $W \subset G$ be a symmetric open set with $K W ^2 \subset U $. By \eqref{item:ga-5}, there is a sequence of open symmetric sets $W_i \subset A_i$ with $\phi_i (W_i) \subset W$ for $i$ large enough. Then define   
    \[   U_i : =    ( (  \phi_i ^{-1} (K ) \cap A_i )  W_i  ) \cap A_i .                       \]
    The fact that 
    \[    \phi_i^{-1} (K) \cap A_i \subset U_i     \]
    is clear from the construction. For $k _i \in \phi_i ^{-1} (K)\cap A_i$ and $w_i \in W_i$, we have
    \[  \phi_i (k_i w_i )  = [\phi_i (k_i)] [ \phi_i (w_i )][   \phi _i (w_i) ^{-1} \phi_i (k_i) ^{-1} \phi _i (k_i w_i) ]  .                        \]
    By \eqref{item:ga-3}, the expression on the right-hand side belongs to $K W ^2 \subset U$ for $i$ large enough. This shows $U_i \subset  \phi_i ^{-1} (U)$ for $i$ large enough, finishing the proof of the first part.  
    
    To prove the second part, assume $U$ is symmetric. Pick $V \subset A$ an open symmetric set with 
    \[ K \cup K^{-1} \subset  V \subset \overline{V} \subset U, \] 
    and let $W \subset G$ be an open  symmetric set with 
    \[ W ^2 V  \cup V W  ^2 \subset U . \] 
    By \eqref{item:ga-5}, there is a sequence of open symmetric sets  $W_i \subset A_i$ with $\phi_i (W_i) \subset W$ for $i$ large enough. Set 
    \[   K_i : = \phi_i ^{-1} (K) \cap A_i                  \] 
    and 
    \[    U_i : =  (  W_i ( K_i  \cup K_i ^{-1}  )    \cup  ( K_i  \cup K_i ^{-1}  )  W_i )  \cap       A_i .             \]
    It is clear that $U_i$ is symmetric and  $K_i \subset U_i$. From \eqref{item:ga-3}, it follows that $\phi_i ( K_i \cup K ^{-1}) \subset V$ for $i$ large enough. Then arguing as in the first part, one has
    \begin{align*}
        \phi_i ( W_i (K_i \cup K_i ^{-1}) \cap A_i  ) & \subset W^2 V , \\
        \phi_i ( (K_i \cup K_i ^{-1}) W_i  \cap A_i  ) &   \subset V W^2 .
    \end{align*}
 Consequently,   $   \phi_i (U_i) \subset  U         $    for $i$ large enough, finishing the proof of the second part. \end{proof}

\begin{Rem}
An earlier version of this paper included the first part of Lemma \ref{lem:approximation-by-internal} in place of \eqref{item:ga-5} as part of the definition of good approximations. It is not hard to show that assuming \eqref{item:ga-1}-\eqref{item:ga-4}, these two conditions are equivalent.      
\end{Rem}

The following result allows one to ``localize'' a good approximation.

\begin{Pro}\label{pro:zoom-i}
    Let $\phi_i: G_i \to G$ be good approximations with $A_i \subset G_i$, $A\subset G$ as regular neighborhoods, and assume $G$ is  metrizable. For any open symmetric set $B \subset A$, there are open symmetric sets $B_i \subset A_i$ such that the maps $\phi_i $ are good approximations with regular neighborhoods $B_i$ and $B $. Moreover, a sequence of groups $H_i \leq G_i$ is small with respect to the pairs $(\phi _i ,  A_i)$  if and only if it is small with respect to the pairs $(\phi_i, B_i)$. 
\end{Pro}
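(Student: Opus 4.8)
The plan is to build $B_i$ so that it is squeezed between the ``$\phi_i$-preimage of $B$'' and a set whose $\phi_i$-image lies in $B$, using a compact exhaustion of $B$ together with property \eqref{item:ga-5}. Since $G$ is locally compact, Hausdorff, and second countable, it is $\sigma$-compact, so I would first fix compact \emph{symmetric} sets $K_1\subset\inte_G(K_2)\subset K_2\subset\inte_G(K_3)\subset\cdots$ with $e\in\inte_G(K_1)$ and $\bigcup_m K_m=B$ (each $K_m$ made symmetric by unioning it with its inverse, which keeps it compact). Next I would reduce to the case where every $\phi_i$ respects inverses: by Lemma \ref{lem:symmetric-approximations} and the observation that the estimate $\psi_i(g_i)^{-1}\phi_i(g_i)\to e$ there is in fact uniform over $A_i^n$ (standard subsequence argument), producing the $B_i$ for the symmetrized approximations $\psi_i$ yields them for $\phi_i$ as well, by the same routine argument used at the end of the proof of Lemma \ref{lem:symmetric-approximations}; that passage also preserves smallness in the sense of Definition \ref{def:ss2}. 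So I may assume $\phi_i(g^{-1})=\phi_i(g)^{-1}$.

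Then, for each $m$ I would apply \eqref{item:ga-5} to the pair $K_m\subset\inte_G(K_{m+1})\subset A$ to obtain, for $i\ge i(m)$, open sets $U_{i,m}\subset A_i$ with $\phi_i^{-1}(K_m)\cap A_i\subset U_{i,m}\subset\phi_i^{-1}(\inte_G K_{m+1})$, and symmetrize by setting $V_{i,m}:=U_{i,m}\cap U_{i,m}^{-1}$. Since $\phi_i$ respects inverses and each $K_m$ is symmetric, one has $\phi_i^{-1}(K_m)^{-1}=\phi_i^{-1}(K_m)$, so $V_{i,m}$ is open, symmetric, contained in $A_i$, and satisfies $\phi_i^{-1}(K_m)\cap A_i\subset V_{i,m}\subset\phi_i^{-1}(K_{m+1})$. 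Finally I would take $M_i\to\infty$ with $i(M_i)\le i$ and set $B_i:=\bigcup_{m\le M_i}V_{i,m}$, which is open, symmetric, and contained in $A_i$.

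The bulk of the verification is then checking \eqref{item:ga-1}--\eqref{item:ga-5} for $(\phi_i,B_i,B)$. Properties \eqref{item:ga-3} and \eqref{item:ga-5} are immediate from the corresponding properties for $A_i$, using $B_i^n\subset A_i^n$ (for \eqref{item:ga-5}, intersect the set provided for $A_i$ with $B_i$). Property \eqref{item:ga-2} is even stronger than needed, since $\phi_i(B_i)\subset\bigcup_{m\le M_i}K_{m+1}=K_{M_i+1}\subset B$. For \eqref{item:ga-1}, given a nonempty open $U\subset B$ I would pick $x\in U$, choose $m_1$ with $x\in\inte_G(K_{m_1})$ and a nonempty open $U'\ni x$ with $\overline{U'}\subset U\cap\inte_G(K_{m_1})$, apply \eqref{item:ga-1} for $A_i$ to get $g_i\in A_i$ with $\phi_i(g_i)\in U'$, and note that then $g_i\in\phi_i^{-1}(K_{m_1})\cap A_i\subset V_{i,m_1}\subset B_i$ for $i$ large. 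For \eqref{item:ga-4}, given $n$ and compact $K\subset B$, pick $m_0$ with $K\subset K_{m_0}$; if $g\in B_i^n\subset A_i^n$ and $\phi_i(g)\in K\subset A$, then \eqref{item:ga-4} for $A_i$ forces $g\in A_i$ for $i$ large, whence $g\in\phi_i^{-1}(K_{m_0})\cap A_i\subset V_{i,m_0}\subset B_i$. For the ``moreover'': smallness with respect to $(\phi_i,B_i)$ trivially implies smallness with respect to $(\phi_i,A_i)$ since $B_i\subset A_i$ and the convergence condition is identical; conversely, if $H_i$ is small with respect to $(\phi_i,A_i)$ then $\phi_i(h_i)\to e\in\inte_G(K_1)$ for every sequence $h_i\in H_i$, so $H_i\subset\phi_i^{-1}(K_1)\cap A_i\subset V_{i,1}\subset B_i$ for $i$ large, which is the only point needing verification.

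The step I expect to require the most care is arranging that $B_i$ is simultaneously open, symmetric, and trapped between $\phi_i^{-1}(K_m)\cap A_i$ and $\phi_i^{-1}(K_{m+1})$: because the maps $\phi_i$ need not be continuous one cannot simply take $\phi_i^{-1}(B)\cap A_i$, and symmetry is not automatic either, which is precisely what forces both the reduction to inverse-respecting $\phi_i$ and the intersection trick $V_{i,m}=U_{i,m}\cap U_{i,m}^{-1}$. The remaining subtlety is the bookkeeping needed to make every ``for $i$ large'' threshold depend only on the stated data and not on the chosen sequences, handled by the usual subsequence argument.
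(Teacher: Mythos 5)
Your proposal is correct and follows essentially the same route as the paper: a compact symmetric exhaustion of $B$, property \eqref{item:ga-5} applied to consecutive terms to squeeze open sets between $\phi_i^{-1}(K_m)\cap A_i$ and a preimage of a slightly larger set, a diagonal choice over $m$, symmetrization via Lemma \ref{lem:symmetric-approximations}, and the same two-sided argument for smallness. The only (harmless) deviation is that the paper additionally uses a decreasing family of open sets $U_m$ with $\bigcap_m U_m=\overline{B}$ to verify \eqref{item:ga-2}, whereas your construction lands $\phi_i(B_i)$ directly inside $B$, which is if anything slightly cleaner.
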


\begin{proof}
    Consider an increasing sequence of compact symmetric sets $K_m \subset B $ such that 
    \[  \bigcup_{m = 1}^{\infty} \text{int}(K_m)   = B.  \]
    By Lemma \ref{lem:approximation-by-internal} and a diagonalization argument, there is a sequence of open symmetric sets  $B_i \subset A_i$ such that for any  $m \in \mathbb{N}$, one has
    \begin{equation}\label{eq:b-i-def}
        \phi_i^{-1}(K_m) \cap A_i \subset B_i  \subset \phi_i^{-1}(B)     
    \end{equation}
    for $i$ large enough.  All the properties of good approximation for the pairs $(\phi_i, B_i)$ can be checked directly from the corresponding ones for the pairs $(\phi_i, A_i)$. 

    Finally, consider a sequence of subgroups $H_i \leq G_i$. If they are small with respect to $(\phi_i, B_i)$, then they are clearly small with respect to $(\phi_i, A_i)$. On the other hand, assume they are small with respect to $(\phi_i, A_i)$. Pick  $U \subset A$ an open symmetric set with $\overline{U} \subset B $. By compactness of $\overline{U}$, there is $m \in \mathbb{N}$ with $U \subset K_m$ and by our choice of $B_i$ we conclude that  $   H_i \subset \phi _i^{-1} (U) \cap A_i \subset  B_i  $   for $i$ large enough, showing that the groups $H_i $ are small with respect to the pairs $(\phi_i, B_i)$.     
\end{proof}

\subsection{Approximately continuous maps}

\begin{Def}
    Let $X$ be a topological space, $Y$ a metric space,  $f: X \to Y$ a function, and $\varepsilon > 0 $. We say $f$ is $\varepsilon$\emph{-continuous} if for any $p \in X$, there is a neighborhood $U \subset X$ of $p$ with  
    \[ d(f(x),f(p)) < \varepsilon \text{ for all }x \in U. \]
\end{Def}

\begin{Rem}\label{rem:continuous}
    It is straightforward to show that $f : X\to Y$ is $\varepsilon$-continuous if and only if for any set $S \subset X$, there is an open set $U \subset X$ containing $S$ with 
    \[ f(U) \subset \bigcup _{s \in S} B_{\varepsilon}(f(s)). \] 
\end{Rem}

\begin{Pro}\label{pro:approx-continuity}
    Let  $\phi_i : G_i \to G$ be a sequence of good approximations with regular neighborhoods $A_i \subset G_i $ and $A \subset G$. For each  $n \in \mathbb{N}$ and each open symmetric set  $U \subset G$,  there is $i _ 0 \in \mathbb{N} $  such that if   $i\geq i_0$, then each $p \in A_i^n$ has a neighborhood $V \subset G _ i $ with 
    \[      \phi_i (x)^{-1} \phi _i (p) \in U  \text{ for all } x \in V .                   \]
\end{Pro}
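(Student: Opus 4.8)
The plan is to argue by contradiction and exploit property \eqref{item:ga-3} of good approximations — the "approximate homomorphism" property — together with a compactness argument on the closure $\overline{A}^n$ inside $G$. Suppose the statement fails for some fixed $n \in \mathbb{N}$ and some open $V \ni e$. Then, after passing to a subsequence, for every $i$ there is a point $p_i \in A_i^n$ and a net converging to $p_i$ witnessing the failure; concretely, since $G_i$ is a topological group and left translation is continuous, the failure of the conclusion at $p_i$ for the open set $V$ means: for every open $U \ni p_i$ there exists $x \in U$ with $\phi_i(x)^{-1}\phi_i(p_i) \notin V$. I would like to convert this into a statement about sequences, but $G_i$ need not be first countable, so instead I would phrase everything through the sets $A_i^n$ directly: the key observation is that $x$ can be taken in $A_i^{n}$ as well, because $p_i$ lies in the open set $A_i^n$ (each $A_i$ is open, so $A_i^n$ is open) and hence a neighborhood basis at $p_i$ inside $A_i^n$ suffices.

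The heart of the matter is then the following: I claim that for any two sequences $x_i, p_i \in A_i^n$ with $x_i p_i^{-1} \to e$ in $G_i$ (meaning: $x_i$ and $p_i$ become "close" — but again $G_i$ is only a topological group, so I should instead work with a fixed neighborhood $W \ni e_{G_i}$ and take $x_i \in W p_i$), one has $\phi_i(x_i)^{-1}\phi_i(p_i) \to e_G$. Write $x_i = w_i p_i$ with $w_i \in W$. Then
\[
\phi_i(x_i)^{-1}\phi_i(p_i) = \phi_i(w_i p_i)^{-1}\phi_i(p_i).
\]
By \eqref{item:ga-3} applied to the pair $(w_i, p_i)$ — note $w_i \in A_i^n$ once $W$ is chosen so that $W \cap A_i^n \subset A_i$, which is possible for $i$ large by property \eqref{item:ga-4a}/\eqref{item:ga-4} since shrinking $W$ around $e_G$ on the target side pulls back appropriately — we get $\phi_i(w_i p_i)^{-1}\phi_i(w_i)\phi_i(p_i) \to e_G$, hence
\[
\phi_i(x_i)^{-1}\phi_i(p_i) = \big[\phi_i(w_i p_i)^{-1}\phi_i(w_i)\phi_i(p_i)\big]\,\phi_i(p_i)^{-1}\phi_i(w_i)^{-1}\phi_i(p_i).
\]
The bracketed term tends to $e_G$; and since $w_i \to e_{G_i}$ forces $\phi_i(w_i) \to e_G$ (by Lemma \ref{lem:convergent-sequences} applied to the constant-type sequence, or rather: any subsequential limit of $\phi_i(w_i)$ lies in $\overline{A}$ and one argues it must be $e_G$ — here I would need $w_i$ to actually converge to $e_{G_i}$, so I should fix $W$ first and note $\phi_i(W \cap A_i) $ lands in a prescribed neighborhood of $e_G$ by \eqref{item:ga-2}-type reasoning after localizing via Proposition \ref{pro:zoom-i}), conjugation by the bounded (in the appropriate sense) sequence $\phi_i(p_i) \in \overline{A}^n$ keeps $\phi_i(p_i)^{-1}\phi_i(w_i)^{-1}\phi_i(p_i)$ close to $e_G$. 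Thus $\phi_i(x_i)^{-1}\phi_i(p_i) \to e_G$, and in particular lies in $V$ for $i$ large, contradicting the choice of $x_i$.

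The main obstacle I anticipate is the absence of first countability on the $G_i$: the naive "pick a sequence $x_i \to p_i$" has to be replaced by a uniform argument over a single neighborhood $W \ni e_{G_i}$, and one must be careful that the same $W$ (or a shrinking family controlled only through the target $G$, via the maps $\phi_i$) works for all large $i$. The clean way to handle this is to first use Proposition \ref{pro:zoom-i} to localize so that $A$ is as small as we like, ensuring $\phi_i(A_i) $ sits in a controlled neighborhood of $e_G$, and then to take $W = \phi_i^{-1}(V_0) \cap A_i$ for a suitable small symmetric $V_0 \ni e_G$ with $V_0^3 \subset V$ (shrinking on the $G$ side and pulling back). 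With that setup the conjugation estimate becomes a finite computation in $G$ using only continuity of multiplication in the Lie group $G$ and the convergences supplied by \eqref{item:ga-3} and Lemma \ref{lem:convergent-sequences}. A secondary subtlety is verifying $w_i \in A_i$ (not merely $A_i^n$) so that \eqref{item:ga-3} applies with the right word length — this is exactly what \eqref{item:ga-4} gives once $\phi_i(w_i)$ is forced into a fixed compact neighborhood of $e_G$ contained in $A$.
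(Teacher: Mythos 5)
Your core strategy is the same as the paper's: use the structure of good approximations to produce open neighborhoods of the identity in $G_i$ whose $\phi_i$-image is small, and then transfer via condition \eqref{item:ga-3}. Two points need repair, and one choice makes your argument longer than necessary.

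First, the genuine gap: you propose to take $W = \phi_i^{-1}(V_0)\cap A_i$. Since $\phi_i$ is not assumed continuous, this set need not be open, so $Wp$ need not be a neighborhood of $p$ and the conclusion (which demands an \emph{open} $U \ni p$) is not reached. This is precisely what condition \eqref{item:ga-5} is for: choosing a compact $K\subset U_0$ with $e\in\inte(K)$ and $U_0$ open symmetric with $U_0^2\subset V$, condition \eqref{item:ga-5} supplies open sets $U_i\subset A_i$ with $\phi_i^{-1}(K)\cap A_i\subset U_i\subset \phi_i^{-1}(U_0)$, and Lemma \ref{lem:convergent-sequences} puts $e_{G_i}$ in $U_i$ for large $i$. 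The paper's proof uses \eqref{item:ga-5} directly in exactly this way. Your alternative of first localizing via Proposition \ref{pro:zoom-i} also works, but note that that proposition assumes $G$ second countable, a hypothesis absent from the statement you are proving; the direct use of \eqref{item:ga-5} avoids importing it.

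Second, the avoidable complication: you decompose $x = wp$ (left translate), which forces the conjugation term $\phi_i(p)^{-1}\phi_i(w)^{-1}\phi_i(p)$ and hence an auxiliary uniformity lemma (for a compact $C$ containing all $\phi_i(p)$, $p\in A_i^n$, and a target neighborhood $V_1\ni e$, choose $V_0$ with $c^{-1}V_0c\subset V_1$ for all $c\in C$; you also need that $\phi_i(A_i^n)$ eventually lies in a fixed compact set, which follows from \eqref{item:ga-2} and \eqref{item:ga-3}). This is all correct but unnecessary: the paper instead sets $U := pU_i$ and writes $x = pu$ with $u = p^{-1}x\in U_i$, so that
\[
\phi_i(x)^{-1}\phi_i(p) \;=\; \bigl[\phi_i(pu)^{-1}\phi_i(p)\phi_i(u)\bigr]\,\phi_i(u)^{-1}\in U_0\cdot U_0 \subset V
\]
for $i$ large, with no conjugation at all. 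With the openness of $W$ repaired via \eqref{item:ga-5}, your version does go through; but the right-translate decomposition is the cleaner route.
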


\begin{proof}
    Let $W \subset G $ be an open symmetric set with $W^2 \subset U$.   By \eqref{item:ga-5}, there is a sequence of open symmetric sets $W_i \subset A_i$ with $\phi_i (W_i)  \subset W$ for $i$ large enough. For $p \in A_i ^n$ and $w \in  W_i $, we have 
    \[   \phi_i (pw) ^{-1} \phi _i (p) = [\phi_i (pw) ^{-1} \phi_i (p) \phi_i (w) ] [\phi_i (w) ^{-1}] .    \]
    By \eqref{item:ga-3}, the expression on the right belongs to $W^2 \subset U$ for $i$ large enough, depending only on $n$ and $W$. Hence $V : = p W_i$ is the desired neighborhood.
\end{proof}

\begin{Cor}\label{cor:approx-continuity}
    Let  $\phi_i : G_i \to G$ be a sequence of good approximations with regular neighborhoods $A_i \subset G_i $ and $A \subset G$. If $G$ is equipped with a left-invariant metric, then for each $n \in \mathbb{N}$ and each $\varepsilon > 0$, the map $\phi_i$ restricted to $A_i^n$ is $\varepsilon$-continuous for $i$ large enough.
\end{Cor}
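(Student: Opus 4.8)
The plan is to obtain this immediately from Proposition \ref{pro:approx-continuity} by specializing $V$ to a metric ball around the identity. Fix $n \in \mathbb{N}$ and $\varepsilon > 0$, and write $d$ for the left-invariant metric on $G$. I would apply Proposition \ref{pro:approx-continuity} with $V := B_\varepsilon^G(e_G)$, which is open and contains $e_G$. This produces an index $i_0 \in \mathbb{N}$ such that for every $i \geq i_0$ and every $p \in A_i^n$ there is an open set $U \subset G_i$ with $p \in U$ and
\[
    \phi_i(x)^{-1}\phi_i(p) \in B_\varepsilon^G(e_G) \qquad \text{for all } x \in U .
\]

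Next I would convert this membership into a distance estimate using left-invariance of $d$: for $x \in U$,
\[
    d\big(\phi_i(x), \phi_i(p)\big) = d\big(e_G,\, \phi_i(x)^{-1}\phi_i(p)\big) < \varepsilon .
\]
Finally, since $U \cap A_i^n$ is a neighborhood of $p$ in the subspace $A_i^n$, the displayed inequality shows that $\phi_i|_{A_i^n}$ satisfies the $\varepsilon$-continuity condition at $p$; as $p \in A_i^n$ was arbitrary, $\phi_i|_{A_i^n}$ is $\varepsilon$-continuous for every $i \geq i_0$. There is essentially no obstacle here: the one observation needed is that $\varepsilon$-continuity with respect to a left-invariant metric is precisely the ``uniform over basepoints'' conclusion of Proposition \ref{pro:approx-continuity} when $V$ is an $\varepsilon$-ball centered at the identity, and intersecting $U$ with $A_i^n$ accounts for the subspace topology on $A_i^n$.
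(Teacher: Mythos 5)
Your proposal is correct and matches the paper's proof, which simply applies Proposition \ref{pro:approx-continuity} with $V = B_{\varepsilon}(e)$; you have just spelled out the left-invariance computation that the paper leaves implicit. No issues.
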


\begin{proof}
    Apply Proposition \ref{pro:approx-continuity} with $U = B_{\varepsilon}(e)$. 
\end{proof}

A continuous function defined on a compact metric space is uniformly continuous. A similar statement holds for $\varepsilon$-continuous functions. 

\begin{Pro}\label{pro:uniform-continuity}
    Let $X, Y$ be metric spaces with $X$ compact, $\varepsilon > 0 $, and $f: X \to Y$ an $\varepsilon$-continuous function. Then there is $\delta > 0 $ such that for all $x_1, x_2 \in X$ with $d(x_1, x_2 ) < \delta$, one has $d(f(x_1), f(x_2)) < 2 \varepsilon$. 
\end{Pro}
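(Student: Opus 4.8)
\textbf{Proof plan for Proposition \ref{pro:uniform-continuity}.}

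The plan is to run the standard Lebesgue-number argument, adapted to the $\varepsilon$-continuity hypothesis. First I would invoke the definition of $\varepsilon$-continuity: for each point $x \in X$ there is an open neighborhood $U_x \subset X$ of $x$ with $d(f(y), f(x)) < \varepsilon$ for every $y \in U_x$. Since $X$ is compact, the open cover $\{U_x\}_{x \in X}$ admits a Lebesgue number $\delta > 0$, meaning every subset of $X$ of diameter less than $\delta$ is contained in some single $U_x$. (Alternatively, extract a finite subcover $U_{x_1}, \dots, U_{x_m}$ and shrink it slightly; I would just cite the Lebesgue number lemma.)

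Now given $x_1, x_2 \in X$ with $d(x_1, x_2) < \delta$, the two-point set $\{x_1, x_2\}$ has diameter less than $\delta$, so it lies in some $U_x$. Then both $d(f(x_1), f(x)) < \varepsilon$ and $d(f(x_2), f(x)) < \varepsilon$, and the triangle inequality gives $d(f(x_1), f(x_2)) < 2\varepsilon$, as desired.

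There is essentially no obstacle here; the only point requiring a sentence of care is justifying the existence of the Lebesgue number, which is where compactness of $X$ is used. The metrizability of $Y$ plays no role beyond making the target expressions meaningful, and the factor $2$ in the conclusion is forced by the triangle inequality through the common ``center'' point — one cannot hope to do better than $2\varepsilon$ with this kind of argument, which is presumably why the statement is phrased with $2\varepsilon$ rather than $\varepsilon$.
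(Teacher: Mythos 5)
Your argument is correct. The paper proves the proposition by contradiction using sequential compactness: assuming the conclusion fails, one extracts pairs $(x_1^i,x_2^i)$ with $d(x_1^i,x_2^i)\to 0$ and $d(f(x_1^i),f(x_2^i))\geq 2\varepsilon$, passes to a subsequence with $x_1^i \to p$, and then applies $\varepsilon$-continuity at the single limit point $p$ to reach a contradiction via the triangle inequality through $f(p)$. Your route is the direct covering version of the same idea: you package the compactness step into the Lebesgue number lemma for the cover $\{U_x\}$, and the triangle inequality through the common center $f(x)$ does the rest. The two proofs are logically interchangeable -- the Lebesgue number lemma is itself usually proved by exactly the sequential argument the paper runs inline -- so the difference is one of presentation: yours is direct and cites a standard lemma, the paper's is self-contained but indirect. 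Both correctly identify that the factor $2$ is forced by triangulating through a common center point, and neither can be improved to $\varepsilon$ by this method.
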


\begin{proof}
Assuming the contrary, there would be a sequence of pairs $(x_1^i, x_2^i) $ in $X$ with $d(x_1^i, x_2^i) \to 0$  as $i \to \infty$, but $d(f(x_1^i), f(x_2^i)) \geq 2 \varepsilon$ for all $i \in \mathbb{N}$. Since $X$ is compact, after passing to a subsequence, we would have $x_1^i \to p$ for some $p \in X$. From the definition of $\varepsilon$-continuity, for $i$ large enough we would have
\[   d(f(x_1^i ), f(x_2 ^i))  \leq d (f(x_1^i), f(p)) + d(f(p) , f(x_2 ^i)) < 2 \varepsilon           , \]
which is a contradiction.
\end{proof}

\begin{Thm}[Approximate Borsuk--Ulam]\label{thm:abu}
If  $n > k$, for any $\varepsilon$-continuous function 
\[ f : \mathbb{S}^{n-1} \to \mathbb{R}^{k}\] 
with $f(-x ) = - f(x)$ for all $x \in \mathbb{S}^{n-1}$, there is $x_0 \in \mathbb{S}^{n-1}$ with $f(x_0) \in  B_{ 2 \varepsilon } (0) .$
\end{Thm}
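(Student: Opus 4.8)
The plan is to deduce this from the classical Borsuk--Ulam Theorem by approximating the $\varepsilon$-continuous map $f$ by a genuinely continuous map $\tilde f : \mathbb{S}^{n-1} \to \mathbb{R}^k$ that is still (exactly) antipodal and stays within $\varepsilon$ of $f$. Once we have such a $\tilde f$, the classical Borsuk--Ulam Theorem (applicable since $\mathbb{S}^{n-1}$ is a sphere of dimension $n-1 \geq k$) produces a point $x_0$ with $\tilde f(x_0) = \tilde f(-x_0)$; combined with antipodality $\tilde f(-x_0) = -\tilde f(x_0)$ this gives $\tilde f(x_0) = 0$, hence $|f(x_0)| = |f(x_0) - \tilde f(x_0)| < \varepsilon < 2\varepsilon$, which is even better than claimed. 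So the real content is the smoothing step.

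First I would invoke Proposition \ref{pro:uniform-continuity}: since $\mathbb{S}^{n-1}$ is compact and $f$ is $\varepsilon$-continuous, there is $\delta > 0$ such that $d(x_1,x_2) < \delta$ implies $|f(x_1) - f(x_2)| < 2\varepsilon$; shrinking, I can get some modulus so that $\delta$-close points have $f$-values within, say, $\varepsilon$ of a common value (using $\varepsilon$-continuity directly on a $\delta$-ball around each point). Next I would take a finite cover of $\mathbb{S}^{n-1}$ by balls $B_{\delta/2}(p_j)$ together with a subordinate partition of unity $\{\rho_j\}$, and I would choose this cover to be \emph{antipodally symmetric} (if $B_{\delta/2}(p_j)$ is in the cover, so is $B_{\delta/2}(-p_j)$) and the partition of unity to satisfy $\rho_j(-x) = \rho_{j'}(x)$ where $p_{j'} = -p_j$; this can be arranged by averaging with the antipodal map. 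Then define $g(x) := \sum_j \rho_j(x)\, f(p_j)$, a continuous map, and finally antisymmetrize: $\tilde f(x) := \tfrac12\big(g(x) - g(-x)\big)$, which is continuous and exactly antipodal. The estimate is that each $f(p_j)$ appearing in the sum at $x$ has $p_j$ within $\delta$ of $x$, so $|f(p_j) - f(x)| < \varepsilon$ (by the choice of $\delta$ and $\varepsilon$-continuity applied on the $\delta$-neighborhood of $x$), hence $|g(x) - f(x)| < \varepsilon$ since the $\rho_j(x)$ are a convex combination; and because $f$ is itself approximately antipodal — wait, $f$ is exactly antipodal by hypothesis — we get $|\tilde f(x) - f(x)| = \tfrac12|g(x) - g(-x) - f(x) + f(-x)| \leq \tfrac12(|g(x)-f(x)| + |g(-x)-f(-x)|) < \varepsilon$.

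The main obstacle, and the only place requiring care, is arranging the partition of unity to be compatible with the antipodal symmetry so that the antisymmetrization $\tilde f$ does not destroy the $\varepsilon$-closeness to $f$; with the symmetric cover in hand this is routine, since $f$ is genuinely antipodal and the averaging only improves constants. One should also double-check that the classical Borsuk--Ulam Theorem is being applied in the right dimension: it states that any continuous $\mathbb{S}^m \to \mathbb{R}^m$ identifies some antipodal pair, and more generally any continuous antipodal map $\mathbb{S}^m \to \mathbb{R}^\ell$ with $\ell \leq m$ has a zero; here $m = n-1 \geq k = \ell$ since $n > k$, so this applies. Alternatively, if one prefers to avoid partitions of unity, one can smooth $f$ by convolution on $\mathbb{S}^{n-1}$ against an approximate identity supported in a small geodesic ball (which automatically commutes with the antipodal map, so antisymmetry is preserved for free), and the same estimate goes through; I would probably present whichever is shortest, noting that the convolution approach sidesteps the symmetry bookkeeping entirely.
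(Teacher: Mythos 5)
Your proposal is correct and follows essentially the same route as the paper: both construct a continuous, exactly antipodal map $\tilde f$ uniformly close to $f$ (the paper via an antipodally symmetric triangulation with piecewise-affine interpolation on simplices of diameter $<\delta$, you via a symmetric partition of unity followed by antisymmetrization) and then invoke the classical Borsuk--Ulam theorem. The only caveat is that the uniform modulus from Proposition \ref{pro:uniform-continuity} gives $|\tilde f - f| < 2\varepsilon$ rather than the $\varepsilon$ you claim (the neighborhoods in the definition of $\varepsilon$-continuity need not have uniform size), but this still yields exactly the stated conclusion $f(x_0) \in B_{2\varepsilon}(0)$.
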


\begin{proof}
     By Proposition \ref{pro:uniform-continuity}, there is $\delta > 0 $ such that if $x,y \in \mathbb{S}^{n-1}$ satisfy $d(x,y) < \delta$, then $d(f(x), f(y)) < 2 \varepsilon$. Put a triangulation on $\mathbb{S}^{n-1} $ that is invariant under the transformation $x \mapsto -x$ and such that each simplex has diameter $< \delta$. Define 
    \[ \tilde{f}: \mathbb{S}^{n-1} \to \mathbb{R}^{k} \] 
    in such a way that agrees with $f$ on the $0$-skeleton of the triangulation, and is affine when restricted to each simplex. Then $\tilde{f}$ is continuous, $\tilde{f}(-x) = - \tilde{f}(x)$ for all $x \in \mathbb{S}^{n-1} $, and $\vert \tilde{f}(x) - f(x) \vert < 2 \varepsilon$ for all $x \in \mathbb{S}^{n-1}$. By the Borsuk--Ulam Theorem, there is $x_0  \in \mathbb{S}^{n-1}$ with $\tilde{f}(x_0 ) = 0$, and consequently $f(x_0 ) \in B_{2\varepsilon} (0)$.
\end{proof}

\subsection{Escape norm}\label{sec:escape}

Let $G $ be a group and $A \subset G$ an open symmetric set. The \emph{escape norm} $\Vert \cdot \Vert _{A} : G \to \mathbb{R}$  is defined as
\[    \Vert g \Vert _A : =  \inf \left\{ \frac{1}{m+1} \,\, \Big| \,\, g^j \in A  \text{ for all } j \in \{ 0, 1, \ldots , m \} \right\}  .                \]
This norm is often not continuous as it attains only a discrete set of values (outside of $0$). However, it is easy to see that for any convergent sequence $g_j \to g $ one has
\[     \limsup_{j \to \infty} \Vert g_j \Vert_{A} \leq \Vert g \Vert _{A}  .                       \]
In the other direction, we have the following result.

\begin{Lem}\label{lem:norm-continuity}
    Let $\phi_i : G_i \to G $ be a sequence of good approximations with regular neighborhoods $A_i \subset G_i$ and $A \subset G $. Assume $G$ is metrizable and $A$ has the property that if $h \in \overline{A}^3 \backslash A $, then $h^2 \notin \overline{A}$. Then for large enough $i$, and any convergent sequence $g_j \to g $ in $G_i$, one has
    \begin{equation}\label{eq:norm-bad-continuity}
        \Vert g \Vert_{A_i} \leq 2 \liminf_{j \to \infty} \Vert g_j \Vert _{A_i} .
    \end{equation}
\end{Lem}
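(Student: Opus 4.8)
The plan is to isolate a single ``transferred'' property of the neighborhoods $A_i$ and then read off \eqref{eq:norm-bad-continuity} from it by elementary counting with the escape norm. The property I would aim for is: \emph{for $i$ large enough, every $h\in G_i$ with $h\in\overline{A_i}$ and $h^2\in\overline{A_i}$ already lies in $A_i$}; call this $(\star)$. Granting $(\star)$, fix such an $i$ and a convergent sequence $g_j\to g$ in $G_i$. Since the conclusion only involves $g$ and $L:=\liminf_j\Vert g_j\Vert_{A_i}$, I pass to a subsequence along which $\Vert g_j\Vert_{A_i}\to L$. Unwinding the definition of the escape norm, for every integer $l\geq 1$ with $L<1/l$ one has $g_j,g_j^2,\dots,g_j^l\in A_i$ for $j$ large, so letting $j\to\infty$ and using continuity of $x\mapsto x^l$ on $G_i$ gives $g^l\in\overline{A_i}$. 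Hence $g^k,g^{2k}\in\overline{A_i}$ whenever $2k<1/L$, and $(\star)$ applied to $h=g^k$ yields $g^k\in A_i$ for every such $k$; this forces $\Vert g\Vert_{A_i}\leq 2L$ (and $=0$ when $L=0$), which is \eqref{eq:norm-bad-continuity}.

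It then remains to prove $(\star)$, which I would do by contradiction. If it fails, then after passing to a subsequence of indices there are $h_i\in\overline{A_i}\setminus A_i$ with $h_i^2\in\overline{A_i}$. Since $A_i$ is open and symmetric, one has $\overline{A_i}\subset A_iA_i=A_i^2$ (for $x\in\overline{A_i}$ the open set $xA_i$ is a neighborhood of $x$ that must meet $A_i$), so $h_i,h_i^2\in A_i^2$; by Lemma \ref{lem:convergent-sequences}, after a further subsequence $\phi_i(h_i)\to h$ and consequently $\phi_i(h_i^2)\to h^2$. I claim that $h\in\overline A$, $h^2\in\overline A$, and $h\notin A$. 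Since $e\in A$ gives $\overline A\subset\overline A^3$, the pair $h,h^2$ then contradicts the standing hypothesis on $A$, completing the proof of $(\star)$.

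The heart of the matter is the claim $h,h^2\in\overline A$: Lemma \ref{lem:convergent-sequences} by itself only places these limits in the coarser set $\overline{A^2}$, and to descend to $\overline A$ I would combine the approximate continuity of $\phi_i$ on $A_i^2$ (Corollary \ref{cor:approx-continuity}, which rests on property \eqref{item:ga-5}) with a diagonal argument. Concretely, for each $k$ there is $i_k$ so that $\phi_i|_{A_i^2}$ is $(1/k)$-continuous for $i\geq i_k$; using approximate continuity at the point $h_i\in\overline{A_i}\subset A_i^2$ together with the fact that every neighborhood of $h_i$ meets $A_i$, I produce $b_i\in A_i$ with $d(\phi_i(b_i),\phi_i(h_i))<1/k$ in that range. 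Letting $k=k(i)\to\infty$, these $b_i\in A_i$ satisfy $d(\phi_i(b_i),\phi_i(h_i))\to 0$, so $h=\lim_i\phi_i(b_i)\in\overline A$ by Lemma \ref{lem:convergent-sequences}; the same argument applied to $h_i^2\in\overline{A_i}$ gives $h^2\in\overline A$. Finally, for $h\notin A$: if instead $h\in A$, choose a compact $K$ with $h\in\inte(K)\subset K\subset A$ (local compactness of $G$); then $\phi_i(h_i)\in K$ for $i$ large, and \eqref{item:ga-4} with $n=2$ forces $h_i\in A_i$ for $i$ large, contradicting $h_i\notin A_i$.

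I expect this diagonal step --- upgrading the containment of subsequential limits of $\phi_i(\overline{A_i})$ from $\overline{A^2}$ to $\overline A$ --- to be the main obstacle: the hypothesis on $A$ is tailored to $\overline A$, while good approximations control $\phi_i$ on the products $A_i^n$ rather than on $\overline{A_i}$ itself, so some quantitative use of approximate continuity of $\phi_i$ near $h_i$ seems unavoidable. Everything else is bookkeeping; the factor $2$ in \eqref{eq:norm-bad-continuity} enters precisely because $(\star)$ converts $\overline{A_i}$-membership of $g^k$ into $A_i$-membership only once $g^{2k}$, and not merely $g^k$, is known to lie in $\overline{A_i}$.
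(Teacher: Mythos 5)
Your argument is correct, but it is organized differently from the paper's. The paper runs a single contradiction with two nested sequences: assuming \eqref{eq:norm-bad-continuity} fails along a subsequence of $i$'s, it takes $g_{i,j}\to g_i$ with $\Vert g_i\Vert_{A_i}>2\,\Vert g_{i,j}\Vert_{A_i}$, sets $m_i=1/\Vert g_i\Vert_{A_i}$, extracts $h=\lim_i\phi_i(g_i^{m_i})\in\overline{A}^3$, uses Proposition \ref{pro:approx-continuity} to replace $g_i^{m_i}$ by $g_{i,n_i}^{m_i}$, shows $h\notin A$ via \eqref{item:ga-4}, and then applies the hypothesis to $h\in\overline{A}^3\setminus A$ to force $g_{i,n_i}^{2m_i}\notin A_i$ via \eqref{item:ga-2}, contradicting the norm inequality. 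You instead factor the whole lemma through the intermediate statement $(\star)$ --- for large $i$, $h,h^2\in\overline{A_i}$ forces $h\in A_i$ --- after which \eqref{eq:norm-bad-continuity} is pure counting with the escape norm; and you prove $(\star)$ with the same toolkit (Lemma \ref{lem:convergent-sequences}, approximate continuity, conditions \eqref{item:ga-4} and \eqref{item:ga-2}), but deploy approximate continuity for a different purpose: to upgrade the location of $\lim_i\phi_i(h_i)$ for $h_i\in\overline{A_i}$ from $\overline{A}^{\,2}$ to $\overline{A}$, so that the hypothesis on $A$ is only ever invoked for elements of $\overline{A}\setminus A$. Your version is more modular and makes transparent that the hypothesis on $A$ is precisely $(\star)$ for the limit group itself; the paper's version avoids the $\overline{A}^{\,2}\to\overline{A}$ upgrade by instead exploiting the stronger hypothesis on all of $\overline{A}^3\setminus A$. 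The factor $2$ enters by the same mechanism in both (escape of $g^{2k}$, resp.\ $g_{i,n_i}^{2m_i}$). Two cosmetic caveats: the limits produced by Lemma \ref{lem:convergent-sequences} lie in the product of closures $\overline{A}^{\,2}$ rather than $\overline{A^2}$ as you wrote, and your diagonal over $k$ in the $(\star)$ step implicitly assumes $G$ carries a (left-invariant) metric inducing its topology --- an assumption equally implicit in the paper's sequential extractions and satisfied in every application of the lemma.
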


\begin{proof}
    Assuming the result fails, after passing to a subsequence there are  $g_i, g_{i,j} \in G_i$  such that $g_{i,j} \to g_i$ for each $i$, but \eqref{eq:norm-bad-continuity} fails. 
    Since outside of $0$, $\Vert \cdot \Vert_{A_i}$ attains only a discrete set of values, we can further assume \begin{equation}\label{eq:norm-bad}
     \Vert g_i \Vert _{A_i} > 2 \, \Vert g_{i,j} \Vert _{A_i} \text{ for all } j\in \mathbb{N}  . \end{equation}   
    Notice that  $g_{i,j}\in A_i$ for all $j$, otherwise the right hand side of \eqref{eq:norm-bad} equals 2, which is impossible.  This implies that $g_i \in \overline{A}_i \subset A_i^2$ for each $i$. Set 
\[     m_i : =  \frac{ 1 }{\Vert g_i \Vert _{A_i}}   .             \]
Notice that $g_i ^{m_i} = g_i ^{m_i - 1} g_i \in A_i^3$ for all $i$, so by Lemma \ref{lem:convergent-sequences}, after passing to a subsequence, one has 
\[   \phi_i (g_i ^{m_i }) \to h \]
for some $ h  \in \overline{A}^3$. By Proposition \ref{pro:approx-continuity}, there is a sequence $n_i \in \mathbb{N}$ with $\phi_i (g_{i,n_i}^{m_i}) \to h$. 

Notice that $ h \notin A $. Otherwise,  by \eqref{item:ga-4} one would have $g_i^{m_i} \in A_i$ for large $i$, contradicting the definition of $m_i$.   Finally, notice that $\phi_i (g_{i,n_i}^{2m_i}) \to h^2 \notin  \overline{A} $, so by \eqref{item:ga-2}, one has $g_{i,n_i} ^{2m_i} \notin A_i$ for $i$ large enough, meaning that
\[     \Vert g_{i,n_i } \Vert _{A_i} \geq \frac{1}{2m_i} = \frac{\Vert g_i \Vert_{A_i}}{2},            \]
    which contradicts \eqref{eq:norm-bad}. 
\end{proof}

\begin{Exa}
 Let $G : = \mathbb{S}^1 $,  $A : =  \mathbb{S}^1 \backslash \{ -1 \} $, $g = -1 $, and $g_j : = e^{( \pi - 1/ j ) \sqrt{-1} } $.  Then $g_j \to g$, but 
 \[      \Vert g \Vert _A = 1 , \hspace{2cm}  \Vert g_j \Vert _{A}  = 0    \]
for all $j$, showing that the condition $h \in \overline{A}^3 \backslash A \Rightarrow h^2 \notin \overline{A}$ in Lemma \ref{lem:norm-continuity} was necessary. 
\end{Exa}

If $G$ is a Lie group with Lie algebra $\mathfrak{g}$, and $B \subset G$ is an open symmetric set, we can also define $\vert \cdot \vert _B  : \mathfrak{g} \to \mathbb{R}$ as:
\[  \vert \vv \vert _B :   = \frac{1}{\tau_B (\vv)}  ,   \]
where $\tau_B (\vv) : = \inf \{ t > 0  \vert \exp (t \vv) \notin B \} \in (0, \infty ]$.   While $\vert \cdot \vert _B$ is  homogeneous, it is in general not a semi-norm due to the failure of the triangle inequality. Nevertheless,  in certain situations it can be approximated by a legitimate norm (see Lemma \ref{lem:norm}). We now show its relationship with the escape norm.
\begin{Pro}\label{pro:norm-definition}  
For all $\vv \in \mathfrak{g}$ one has 
    \begin{equation}\label{eq:escape-less-than-norm}
         \Vert \exp (\vv) \Vert _{B}     \, \leq  \, \vert \vv \vert _{B}   
    \end{equation}
    and 
    \begin{equation}\label{eq:norm-definition}
        \vert \vv \vert _B  = \limsup_{t \to 0} \frac{\Vert \exp (t\vv ) \Vert _{B}   }{\vert t \vert }  . 
    \end{equation}
\end{Pro}
\begin{proof} To prove \eqref{eq:escape-less-than-norm} we can assume  $\Vert \exp (\vv) \Vert _{B} > 0 $, so there is $m \in \mathbb{N}$ with $\Vert  \exp (\vv) \Vert _{B}  = \frac{1}{m}$ and 
    \[  \exp (m \vv)  =  \exp (\vv ) ^m   \notin B.                        \]
    This implies $\tau _B (\vv) \leq m$, and \eqref{eq:escape-less-than-norm} follows from the definition of $\vert \vv \vert _B $. Since $\vert \cdot \vert _B$ is homogeneous, from \eqref{eq:escape-less-than-norm} we deduce 
    \[   \limsup _{t \to 0}   \frac{\Vert \exp (t \vv ) \Vert _B }{ \vert t \vert } \leq \limsup _{t \to 0} \frac{\vert t \vv \vert _{B}}{ \vert t \vert } = \vert \vv \vert _B .    \]
    To prove the other inequality of \eqref{eq:norm-definition},  we can assume $\tau _B (\vv) < \infty$.  Since $B$ is open, 
\[
\exp ( \tau _B(\vv) \vv ) \notin B  ,
\]
so by the definition of escape norm, one has 
    \[  \Vert \exp (\tau _B(\vv) \vv / m ) \Vert_{B} \geq \frac{1}{m} . \]
    Hence
    \[     \limsup_{t \to 0} \frac{\Vert \exp (t\vv ) \Vert _{B}   }{\vert t \vert }  \geq    \limsup_{ m \to \infty}   \frac{\Vert \exp (\tau _B (\vv)\vv/m ) \Vert _{B}   }{ \tau_B (\vv) / m  } \geq \frac{1}{\tau _B (\vv)}  = \vert \vv \vert _B.  \]
\end{proof}

\section{From eGH convergence to good approximations}\label{sec:egh-to-ga}

In this section we show that the maps given by the definition of equivariant Gromov--Hausdorff convergence are good approximations.

\begin{proof}[Proof of Theorem \ref{thm:egh-to-ga}]
    Let $\theta : [0 , \infty) \to \mathfrak{M}(G) $ be given by 
    \[    \theta (r) : =  \{ g \in G \vert d(gp,p) \leq r  \} . \]
    By Lemma \ref{lem:theta-continuity}, we can choose $r_0 > 0 $ such that $\theta$ is continuous at $r_0$, and set
    \begin{align*}
          A  & : =  \{ g \in G \vert d (gp,p) < r_0 \} , \\
          A_i &  : =  \{ g \in G_i \vert d (gp_i,p_i) < r_0 \} . 
    \end{align*}
    Note that all $A_i$ and $A$ are open and pre-compact.

    From the properties of equivariant Gromov--Hausdorff convergence, for any $g \in A$, there is a sequence $g_i \in G_i$ with
    \[    \lim _{i \to \infty} d(g_ip_i, p_i) = d (gp,p) < r_0                \]
    and 
    \[ \phi_i (g_i) \to g. \]  
    This implies that $d(g_ip_i,p_i)< r_0$ for large enough $i$, and hence $g_i \in A_i$, proving \eqref{item:ga-1}. 

    Now pick $V \subset G$ an open set with $\overline{A}  \subset V$. Assuming \eqref{item:ga-2} fails, after passing to a subsequence, there would be $g_i \in A_i$ such that $\phi_i (g_i) \notin V$ for all $i$. After further passing to a subsequence, we can assume $\phi_i (g_i) \to g \notin \overline{A}$. However, since $d(g_ip_i,p_i) < r_0$ for all $i$, we have $d(gp,p) \leq r_0$, so $g \in \theta (r_0)$. By our choice of $r_0$, for each $\varepsilon > 0 $ there are $\delta > 0 $ and $h \in \theta (r_0 - \delta) \subset A $ with $d_p (g,h) \leq \varepsilon $. This shows $g \in \overline{A}$, a contradiction. Therefore \eqref{item:ga-2} holds. Condition  \eqref{item:ga-3} follows easily from Proposition \ref{pro:almost-morphism}.

    Pick a compact $K \subset A$. If \eqref{item:ga-4} fails, after passing to a subsequence, there would be $n  \in \mathbb{N}$ and  $g_i \in A_i ^n$ with $\phi_i (g_i) \in K$, but $g_i \notin A_i$ for all $i$. After further passing to a subsequence, we can assume $\phi_i (g_i) \to g \in K \subset A$. This implies
    \[   \limsup_{i \to \infty } d(g_ip_i,p_i) = \limsup _{i \to \infty} d(\phi_i (g_i)p,p) = d(gp,p) < r_0,                   \]
    so $d(g_ip_i,p_i) < r_0 $ for all large $i$ and $g_i \in A_i$. This is a contradiction,  showing \eqref{item:ga-4}.

    Finally, to verify \eqref{item:ga-5},   let  $ U \subset G$ be an identity neighborhood. Pick   $\varepsilon > 0  $ so that 
    \[   \{ g \in G \, \vert \, d(gx,x) \leq 2  \varepsilon  \text{ for all } x \in B_{ 1 / 2 \varepsilon}(p) \} \subset U                 \]
    and set 
    \[    U_i : = \{ g \in G_i \, \vert \, d (g x , x ) \leq \varepsilon \text{ for all } x \in B_{1/ \varepsilon }(p_i) \}   .            \] 
    It follows from the properties of $\phi_i$ that $\phi_i (U_i) \subset U$ for $i$ large enough, proving  \eqref{item:ga-5}. 

    Now pick $H_i \leq G_i $ a sequence of subgroups and assume they are small in the sense of Definition \ref{def:ss1}. This means that for any sequence $h_i \in H_i$ one has
    \[      \lim_{i \to \infty} d_{p_i} (h_i, e ) =  0 ,      \] 
    which implies 
    \[   \lim_{i \to \infty} d(h_ip_i,p_i) = 0  ,        \]
    so $h_i \in A_i$ for $i$ large enough, and also 
    \[       \lim_{i \to \infty} d_p (\phi_i (h_i), e) = 0 ,                \]
    showing that the groups $H_i$ are small in the sense of Definition \ref{def:ss2}. 

    Now assume the sequence $H_i$ is small in the sense of Definition \ref{def:ss2}. For any sequence $h_i \in H_i$, one has 
    \[  \limsup_{i \to \infty} d (h_ip_i,p_i)  \leq  r_0 , \]
     so Proposition \ref{pro:egh-is-pgh} implies
    \[ 
        \lim_{i \to \infty } d_{p_i}(h_i,e)  =  \lim_{i \to \infty}d_p(\phi_i (h_i), \phi_i (e))   =  0,
    \]
    showing that the sequence $H_i$ is small in the sense of Definition \ref{def:ss1}.
\end{proof}

\section{Blowing up approximations}\label{sec:blowup}

In this section, given a sequence of good approximations $\phi _i : G_i \to G $ with $G$ a Lie group, we produce a sequence of good approximations $\psi_i : G_i \to \mathbb{R}^k$ with $k : = \dime (G)$ in such a way that the small subgroups with respect to $\psi _i $ are also small with respect to  $\phi_i$.

\begin{Lem}\label{lem:zoom-ii}
    Let $\phi_i: G_i \to G$ be good approximations with $A_i \subset G_i$, $A\subset G$ as regular neighborhoods, and assume $G$ is a Lie group of dimension $k$. Then there is a sequence of good approximations $\psi _i : G_i \to \mathbb{R}^k$ with regular neighborhoods $B_i \subset A_i$ and $B = B_1 (0)$. 

    Moreover, this can be done in such a way that any sequence $H_i \leq G_i$ of small subgroups with respect to the pairs $(\psi_i, B_i)$ is also small with respect to the pairs $(\phi_i, A_i)$.
\end{Lem}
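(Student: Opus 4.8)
The plan is to use the exponential chart of the Lie group $G$ near the identity to transport the good approximations $\phi_i$ to good approximations into a ball of $\mathbb{R}^k$. Concretely, fix an inner product on $\mathfrak{g} \cong \mathbb{R}^k$, and choose $\rho > 0$ small enough that $\exp : B_{3\rho}^{\mathfrak{g}}(0) \to G$ is a diffeomorphism onto its image and the Baker--Campbell--Hausdorff expansion converges there, so that $\log$ is defined and smooth on $\exp(B_{3\rho}^{\mathfrak{g}}(0))$. Let $B := B_1(0) \subset \mathbb{R}^k$ and, after rescaling the inner product, identify $B_\rho^{\mathfrak{g}}(0)$ with $B$; set $W := \exp(B_\rho^{\mathfrak{g}}(0)) \subset G$, an open symmetric pre-compact neighborhood of $e_G$. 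By Proposition \ref{pro:zoom-i} (applicable since $G$, being a Lie group, is second countable), we may first replace $A$ by a regular neighborhood contained in $W$, obtaining regular neighborhoods $B_i' \subset A_i$ and $W' \subset W$ for the approximations $\phi_i$, and by Remark \ref{rem:symmetric} we may assume $\phi_i$ respects inverses. Then define $\psi_i : G_i \to \mathbb{R}^k$ by $\psi_i := \log \circ \phi_i$ wherever $\phi_i$ lands in $\exp(B_{3\rho}^{\mathfrak{g}}(0))$, and (say) $\psi_i := 0$ otherwise; the second case is harmless because Lemma \ref{lem:convergent-sequences} forces $\phi_i(A_i^n)$ to lie in $\overline{A}^n \subset W'^n$ eventually, so on $A_i^n$ the map $\psi_i$ is genuinely $\log \circ \phi_i$ for $i$ large. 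Take $B_i := B_i'$ as the candidate regular neighborhoods on the source side.

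Next I would verify properties \ref{item:ga-1}--\ref{item:ga-5} for $\psi_i$ with regular neighborhoods $B_i$ and $B = B_1(0)$. Properties \ref{item:ga-1} and \ref{item:ga-2} are immediate: since $\log$ is a homeomorphism from $W'$ (or a slightly larger set) onto an open subset of $\mathbb{R}^k$ carrying $e_G$ to $0$ and $\overline{W'}$ into $\overline{B}$, the density statement \ref{item:ga-1} and the containment statement \ref{item:ga-2} for $\phi_i$ transfer directly under $\log$. Property \ref{item:ga-3} is where BCH enters: for $g_i, h_i \in B_i^n$, write $\phi_i(g_i) = \exp(u_i)$, $\phi_i(h_i) = \exp(v_i)$, $\phi_i(g_i h_i) = \exp(w_i)$ with $u_i, v_i, w_i$ bounded (staying in $\overline{B}^n$'s $\log$-image); condition \ref{item:ga-3} for $\phi_i$ says $\exp(w_i)^{-1}\exp(u_i)\exp(v_i) \to e_G$, i.e. $\mathrm{BCH}(-w_i, u_i, v_i) \to 0$, while $\psi_i(g_i h_i)^{-1}\psi_i(g_i)\psi_i(h_i) = -w_i + u_i + v_i$ in $\mathbb{R}^k$; since the BCH series is $(-w_i+u_i+v_i)$ plus higher-order bracket terms, and all arguments are bounded, $-w_i+u_i+v_i \to 0$ follows from $\mathrm{BCH}(-w_i,u_i,v_i)\to 0$ (solve iteratively for the linear term — this is the only mildly delicate estimate, but it is a fixed smooth-function computation on a compact set). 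Properties \ref{item:ga-4} and \ref{item:ga-5} transfer because $\log$ is a homeomorphism onto its image: a compact $K \subset B = B_1(0)$ corresponds to the compact $\exp(K) \subset W'$, and $\psi_i(g) \in K \iff \phi_i(g) \in \exp(K)$, so \ref{item:ga-4} for $\psi_i$ is exactly \ref{item:ga-4} (or the stronger \ref{item:ga-4a} from Proposition \ref{pro:zoom-i}) for $\phi_i$; similarly for \ref{item:ga-5}, given $K \subset U \subset B$ one pulls back through $\log$ and uses \ref{item:ga-5} for $\phi_i$ with $\exp(K) \subset \exp(U) \subset W'$, then pushes the resulting $U_i$ forward unchanged (it already lives in $G_i$).

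Finally, for the "moreover" clause: if $H_i \leq G_i$ is small with respect to $(\psi_i, B_i)$, then $H_i \subset B_i \subset A_i$ eventually and $\psi_i(h_i) \to 0$ for every sequence $h_i \in H_i$; since $\psi_i = \log \circ \phi_i$ on $B_i$ for $i$ large and $\log$ is a homeomorphism near $0$ with $\log(e_G) = 0$, continuity of $\exp$ at $0$ gives $\phi_i(h_i) = \exp(\psi_i(h_i)) \to e_G$, so $H_i$ is small with respect to $(\phi_i, B_i')$, hence — by the last assertion of Proposition \ref{pro:zoom-i} — small with respect to $(\phi_i, A_i)$. I expect the main obstacle to be the bookkeeping in property \ref{item:ga-3}: one must make sure all the vectors $u_i, v_i, w_i$ remain in a fixed compact subset of the domain of BCH (so that the "higher-order terms are controlled by the linear term" argument is legitimate and uniform in $i$), which is exactly why we first shrink $A$ into the small neighborhood $W'$ via Proposition \ref{pro:zoom-i} before taking logarithms.
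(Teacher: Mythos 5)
Your construction has a genuine gap at property \ref{item:ga-3}, and it is not a bookkeeping issue: it is the central difficulty of the lemma. You define $\psi_i=\log\circ\phi_i$ on a neighborhood of \emph{fixed} size $\rho$ and claim that $\mathrm{BCH}(-w_i,u_i,v_i)\to 0$ forces $-w_i+u_i+v_i\to 0$. This is false whenever $G$ is non-abelian near the identity. Writing $\mathrm{BCH}(-w,u,v)=-w+u+v+Q(w,u,v)$ with $Q$ a sum of iterated brackets, the hypothesis only gives $-w_i+u_i+v_i\to -\lim Q(w_i,u_i,v_i)$, and the bracket terms are of order $\rho^2$, bounded but not small. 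Concretely, in the Heisenberg group take $u,v$ with $[u,v]\neq 0$ and $w=\mathrm{BCH}(u,v)$; then $\mathrm{BCH}(-w,u,v)=0$ exactly while $-w+u+v=-\tfrac12[u,v]\neq 0$. Your ``solve iteratively for the linear term'' step cannot recover smallness of the linear part, because the obstruction $[u_i,v_i]$ is an independent quantity that need not vanish. Since the target group $\mathbb{R}^k$ is abelian, property \ref{item:ga-3} for $\psi_i$ genuinely requires the multiplication of $G$ to linearize in the limit, and a fixed-scale logarithm does not achieve this.

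The paper's proof fixes exactly this by a blow-up: for $i$ in a range $[i_m,i_{m+1})$ it chooses shrinking neighborhoods $B_i$ with $\phi_i(B_i)\subset B_{1/m+1/m^2}(e_G)$ and sets $\psi_i(g)=m\cdot\log(\phi_i(g))$, with $m\to\infty$. At scale $1/m$ the bracket terms in BCH are $O(1/m^2)$, so after rescaling by $m$ they contribute $O(1/m)\to 0$, and addition in $\mathbb{R}^k$ is recovered in the limit; the unit ball $B_1(0)$ is the rescaled image of the shrinking balls $B_{1/m}(e_G)$. Your remaining verifications (\ref{item:ga-1}, \ref{item:ga-2}, \ref{item:ga-4}, \ref{item:ga-5}, and the ``moreover'' clause) are essentially fine in spirit but must be redone relative to this shrinking-and-rescaling construction rather than a single application of Proposition \ref{pro:zoom-i}. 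As written, your argument only proves the lemma when the identity component of $G$ is abelian.
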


Recall that when we are dealing with a Lie group $G$ with Lie algebra $\mathfrak{g}$, the notations $\exp : \mathfrak{g} \to G$ and $\log: U \to \mathfrak{g}$ refer to the algebraic exponential and logarithm maps, and not the Riemannian ones.

\begin{proof}
    Equip the identity connected component of $G$ with a left-invariant Riemannian metric, and identify its Lie algebra $\mathfrak{g}$ with $\mathbb{R}^k$ via a linear isometry.     For each $m \in \mathbb{N}$ with $B_{1/m}(e_G) \subset A $, let $i_m \in \mathbb{N}$ be such that for all $i \geq i_m$ one has:
    \begin{itemize}
        \item $\phi_i (A_i)$ intersects each ball of radius $1/m^2$ in $B_{2/m}(e)$. 
        \item $\phi_i \vert _{A_i^2}$ is $1/m^2$-continuous.
        \item For all $g,h \in A_i^2$, one has
        \begin{equation}\label{eq:almost-morphism-blow-up}
             d(\phi_i(gh) , \phi_i(g)\phi_i(h))  \leq 1/ m^2 .                       
        \end{equation} 
    \end{itemize}
By Lemma \ref{lem:approximation-by-internal}, we can assume that for all $i \geq i_m$ there is an open symmetric set $B_i \subset A_i$ with 
        \begin{equation}\label{eq:bi-small}
              \phi_i ^{-1} (B_{1/m} (e_G) ) \cap A_i \subset B_i \subset \phi_i ^{-1}( B_{1/m + 1/m^2} (e_G))   .          
        \end{equation}
    Inductively, we can arrange such that $i_{m} < i_{m+1}$ for all $m$. Fix  $i \in [ i_m , i _{m+1}) $ and  define $\psi_i : G_i \to \mathbb{R}^k$ as follows:
    \[    \psi_i (g) : = \begin{cases}
        m \cdot \log (\phi_i (g)) &\text{ if defined}.\\
        0 & \text{ if } \log (\phi_i (g)) \text{ is not defined.}        
    \end{cases}                     \]
    Note that for fixed $\varepsilon \in ( 0 , 1 / 2 ] $, if  $m \in \mathbb{N}$ is large enough, then
    \begin{equation}\label{eq:exp-close-to-dentity}
    \begin{gathered}
         (1- \varepsilon)  \vert \vv - \ww \vert \leq  d(\exp (\vv), \exp (\ww))  \leq (1 + \varepsilon )    \vert \vv - \ww \vert  , \\
            d(\exp (\vv + \ww ) , \exp (\vv) \exp (\ww ))  \leq  \varepsilon / m .  
    \end{gathered}
    \end{equation}  
    for all $\vv , \ww \in B_{1/ \varepsilon m }(0) \subset \mathfrak{g}$. For such $m$ and $i \in [ i_m , i _{m+1}) $,  $\psi_i (B_i)$ intersects each ball of radius $2/m$ in $B : = B_1 (0) \subset \mathfrak{g}$, so \eqref{item:ga-1} holds.    \eqref{item:ga-2} is obtained by combining  \eqref{eq:bi-small} and \eqref{eq:exp-close-to-dentity}. \eqref{item:ga-3} follows from \eqref{eq:almost-morphism-blow-up} and \eqref{eq:exp-close-to-dentity}. 

    \begin{center}
        \textbf{Claim: }For each $n \in \mathbb{N}$, if $i$ is large enough, then $B_i ^n \subset A_i$. 
    \end{center}
    Take $K_0 \subset A$ a compact set that contains a neighborhood of the identity, and pick $m \in \mathbb{N}$ such that $B_{2n/m} (e_G) \subset K_0$.  Take $i \geq  i_m $ with the property that any $g \in A_i ^2 $ with $\phi_i (g) \in K_0$ satisfies $g \in A_i$. Since $\phi_i (B_i ^n ) \subset K_0$,  by induction on $\ell$, we see that $B_i^{\ell} \subset A_i$ for all $\ell \in \{ 1, \ldots , n \}$, proving the claim.   

    To see \eqref{item:ga-4}, fix $K \subset B$ compact and $n \in \mathbb{N}$. Let $\delta > 0 $ be such that $K \subset B_{1- \delta }(0)$, and pick $m_0 \in \mathbb{N}$  so that  \eqref{eq:exp-close-to-dentity} holds with $\varepsilon = \delta / 2 $ for all $m \geq m_0$, and $B_i^n \subset A_i$ for all $i \geq i_{m_0}$.  For $i \in [i_m , i_{m+1})$ with $m \geq m_0$, if $g \in B_i^n$ and $\psi_i (g) \in K$, then $\phi_i (g) \in B_{1/m}(0)$, so $g \in B_i$.

From \eqref{eq:almost-morphism-blow-up}, we deduce $d(\phi_i (e_{G_i}), e_G) \leq 1/m^2$.   From \eqref{eq:exp-close-to-dentity} and the fact that $\phi_i \vert _{A_i ^2} $ is $1/ m^2$-continuous for $i \geq i_m $, we deduce that  $\psi_i \vert _{B_i} $ is $2 / m$-continuous  for $i$ large enough.  This implies \eqref{item:ga-5}.  

    If $i \in [i_m, i_{m+1})$, for any subgroup $H_i \leq G_i$ with $H_i \subset B_i$ one has 
    \[   \phi_i (H_i ) \subset B_{ 2/ m} (e_G) ,                 \]
    so if a sequence of subgroups $H_i \leq G_i$ is small with respect to the pairs $(\psi_i, B_i)$, then it is small with respect to the pairs $(\phi_i, A_i)$. 
\end{proof}

\section{Maximal small subgroups}\label{sec:max-small}

In this section we prove Theorem \ref{thm:largest-small} by establishing the existence of maximal small subgroups. Our argument follows the proof of   \cite[Proposition 9.2]{breuillard-green-tao}.

\begin{Thm}[Breuillard--Green--Tao \cite{breuillard-green-tao}]\label{thm:gleason}  
    Let $\phi_i : G_i \to G$ be a sequence of good approximations with regular neighborhoods $A_i \subset G_i$ and $A \subset G$. Assume $G$ is a Lie group and equip it with a left-invariant Riemannian metric. Set  $B : = B (r)  : = \exp (B_r(0)) \subset G$ and let $B_i \subset G_i$ be the open symmetric sets given by Proposition  \ref{pro:zoom-i}. Then there is $C_0 > 0 $ such that if $r > 0 $ is small enough, for sufficiently large $i$ one has:
    \begin{itemize}
        \item $ \Vert g_1 \cdots g_m \Vert _{B_i} \leq C_0 \sum_{j =1}^m \Vert g_j \Vert _{B_i}   $ for all $ g_1, \ldots , g_m \in G_i.$
        \item $\Vert ghg^{-1} \Vert_{B_i} \leq 1000 \, \Vert h \Vert _{B_i} $ for all $g,h \in B_i^{10}$.
        \item $ \Vert [g,h] \Vert _{B_i}  \leq C_0 \Vert g \Vert _{B_i} \Vert h \Vert _{B_i} $ for all $g,h \in B_i ^{10}$.
    \end{itemize}
\end{Thm}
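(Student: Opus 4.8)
The plan is to follow the proof of the Gleason lemmas in \cite{breuillard-green-tao}, translated into the language of good approximations, with the Lie group $G$ itself playing the role of their (locally compact) model and the maps $\phi_i$ playing the role of their almost-morphisms; the point is that the axioms of Definition \ref{def:ga}, together with the refinement \ref{item:ga-4a} produced by Proposition \ref{pro:zoom-i}, are exactly what is needed to transport their escape-norm estimates to the $G_i$. Several reductions are available at the outset. Since $e_{G_i}\in B_i$, one always has $\Vert g\Vert_{B_i}\le 1$, so it suffices to find a constant that works whenever $\sum_j\Vert g_j\Vert_{B_i}$ is below a fixed threshold $\eta_0$; for larger sums the inequality is automatic once $C_0\ge 1/\eta_0$. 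Equip the identity component of $G$ with the given left-invariant metric and fix $r>0$ small enough that $\exp$ is a bi-Lipschitz diffeomorphism of $B_{10r}(0)\subset\mathfrak{g}$ onto its image with the Baker--Campbell--Hausdorff (BCH) series converging there; by Proposition \ref{pro:zoom-i} and \ref{item:ga-4a}, for each $n$ and all large $i$ the set $B_i$ is sandwiched, $\phi_i^{-1}(\exp(\overline{B}_{r/2}(0)))\cap A_i^{n}\subset B_i\subset\phi_i^{-1}(\exp(B_{2r}(0)))$.

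The first main step is a comparison between the escape norm on $G_i$ and the Lie-algebra norm transported through $\phi_i$. Let $\varepsilon_i\to 0$ satisfy $d(\phi_i(gh),\phi_i(g)\phi_i(h))\le\varepsilon_i$ for all $g,h\in A_i$ (condition \ref{item:ga-3} with $n=1$, made uniform via left-invariance). If $\Vert g\Vert_{B_i}=1/(m+1)$ with $m\ge 1$, then $g,\dots,g^m\in B_i\subset A_i$, so $u_k:=\log\phi_i(g^k)$ is defined with $|u_k|<2r$ for $k\le m$; writing $v:=u_1$, a telescoping carried out in the Lie algebra gives $u_k=u_{k-1}+v+R_{k-1}$ with $|R_{k-1}|\le C(\varepsilon_i+r|v|)$, hence $|u_k-kv|\le Ck(\varepsilon_i+r|v|)$. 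Taking $k=m$ and using $|u_m|<2r$ gives $|\log\phi_i(g)|\le C(r\Vert g\Vert_{B_i}+\varepsilon_i)$; taking $k=m+1$ and using $g^{m+1}\notin B_i$ (so that $\phi_i(g^{m+1})$ leaves a fixed small ball around $e$, by the sandwiching of $B_i$) gives the reverse inequality $|\log\phi_i(g)|\ge c\,r\Vert g\Vert_{B_i}-C\varepsilon_i$. The same considerations, with $\phi_i$ replaced by the identity and $\varepsilon_i$ by $0$, show via Proposition \ref{pro:norm-definition} that in $G$ itself $\Vert h\Vert_{B(r)}\asymp|\log h|/r$ for $h$ near $e$, so that $\Vert\cdot\Vert_{B(r)}$ on $G$ is subadditive with a constant depending only on $G$ (immediate from $\Vert\cdot\Vert_{B(r)}\le1$ and the BCH bound $|\log(xy)|\le|\log x|+|\log y|+C|\log x||\log y|$).

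The second main step, following \cite{breuillard-green-tao}, is a commutator estimate: there is $C_1$ such that for large $i$ one has $\Vert[g,h]\Vert_{B_i}\le C_1\Vert g\Vert_{B_i}\Vert h\Vert_{B_i}$ whenever $g,h$ lie in a fixed bounded power of $B_i$ (this is the companion of the conjugation estimate $\Vert ghg^{-1}\Vert_{B_i}\lesssim\Vert h\Vert_{B_i}$ appearing in the original form of these lemmas). Granting it, subadditivity follows by the Gleason telescoping argument: given $g_1,\dots,g_m$ with small escape norms, put $n\asymp 1/\sum_j\Vert g_j\Vert_{B_i}$; then $(g_1\cdots g_m)^n$ differs from the sorted product $g_1^n\cdots g_m^n$ by a product of commutators whose escape norms, estimated via the commutator bound and the escape-type inequality $\Vert g^a\Vert_{B_i}\lesssim a\Vert g\Vert_{B_i}$, add up to $O(1)$; since each $g_j^n$ also has bounded escape norm, $(g_1\cdots g_m)^n$ lies in a bounded power of $B_i$, whence $\Vert g_1\cdots g_m\Vert_{B_i}\lesssim 1/n\asymp\sum_j\Vert g_j\Vert_{B_i}$.

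The main obstacle is this second step. The commutator estimate is the core of the Gleason machinery: one views the commutator as a discrete second derivative and exploits the escape property of $B(r)\subset G$ — available because Lie groups have no small subgroups — fed back to the $G_i$ through the comparison of the first step. The accompanying difficulty is uniformity in the number of factors $m$ and in the exponent $n$: one cannot merely iterate a two-factor inequality, which would cost a factor growing with $m$, so one must run the BGT bootstrap and keep careful track throughout of which constants get absorbed and which depend only on $G$ and not on $r$ or $i$.
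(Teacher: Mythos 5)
Your outline has a genuine gap at exactly the point you flag as ``the main obstacle'': the commutator estimate $\Vert[g,h]\Vert_{B_i}\le C_1\Vert g\Vert_{B_i}\Vert h\Vert_{B_i}$ is asserted and then used, but never proved, and the tools you set up in your first step cannot prove it. Your comparison gives $|\log\phi_i(g)|\le C(r\Vert g\Vert_{B_i}+\varepsilon_i)$ and a matching lower bound, both with an \emph{additive} error $O(\varepsilon_i)$ coming from condition (III). Transporting the commutator through $\phi_i$ and applying BCH in $G$ then yields at best $\Vert[g,h]\Vert_{B_i}\lesssim r\Vert g\Vert_{B_i}\Vert h\Vert_{B_i}+O(\varepsilon_i/r)$, and the term $O(\varepsilon_i/r)$ cannot be absorbed: for each fixed $i$ there are elements of $G_i$ with nonzero escape norm far smaller than $\varepsilon_i$, so the product $\Vert g\Vert_{B_i}\Vert h\Vert_{B_i}$ can be much smaller than the error. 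This is precisely why Breuillard--Green--Tao do not argue by pushing everything into the model group; their proof of the Gleason lemmas constructs bump functions supported on the approximate group (via Haar measure/convolution) and differentiates them to build a Gleason metric, a mechanism entirely absent from your sketch. Saying ``one must run the BGT bootstrap'' is naming the missing proof, not supplying it.

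For contrast, the paper does not reprove the Gleason lemmas at all: it verifies that the sets $B_i$ (for $i$ large) are strong $K$-approximate groups by observing that the ultralimit of the maps $\phi_i:B_i^8\to G$ is a good model in the sense of \cite[Definition 3.5]{breuillard-green-tao} --- this is where condition (I) of Definition \ref{def:ga} is used --- and then invokes \cite[Proposition 7.3]{breuillard-green-tao} and \cite[Theorem 8.1]{breuillard-green-tao} directly, noting only that the finite/counting-measure setting of that paper must be replaced by open pre-compact sets with left-invariant Haar measure. If you want a self-contained argument rather than this reduction, you must reproduce the convolution construction; if you are content with a reduction, the work to be done is checking BGT's hypotheses (good model, strong approximate group), which your proposal does not address either.
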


\begin{proof}
      By \cite[Theorem 8.1]{breuillard-green-tao}, one only needs to show that for $i$ large enough, the sets $B_i$ are strong $K$-approximate groups (as defined on \cite[Definitions 1.2 and 7.1]{breuillard-green-tao}). This is essentially the content of \cite[Proposition 7.3]{breuillard-green-tao}, since taking the ultralimit of the maps $\phi_i : B_i^8 \to G $ produces a good model in the sense of \cite[Definition 3.5]{breuillard-green-tao} with the word ``finite'' in item (iii) replaced by ``open pre-compact'' (this is actually the only point in this paper where we use condition \eqref{item:ga-1}). 

    Notice that while in \cite{breuillard-green-tao} they work exclusively with finite approximate groups, all the relevant material \cite[Definitions 1.2, 3.5, and 7.1,  Proposition 7.3, and Theorem 8.1]{breuillard-green-tao}, works equally well if one uses open pre-compact subsets of locally compact Hausdorff groups instead of finite subsets of local groups. This change also requires that one uses a left-invariant Haar measure instead of the counting measure in the proof of \cite[Theorem 8.1]{breuillard-green-tao}.
\end{proof}

The term \emph{no small subgroup} originated from the following result. It essentially states that if a group doesn't contain small subgroups, then it is a Lie group.

\begin{Thm}[Gleason--Yamabe \cite{yamabe}]\label{thm:gleason-yamabe}
Let $G$ be a locally compact Hausdorff group. If there is  an open symmetric set that contains no non-trivial subgroup, then $G$ is a Lie group.
\end{Thm}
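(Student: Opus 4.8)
This is a cornerstone of the solution to Hilbert's fifth problem, due to Gleason and Yamabe, and I would treat it as a black box, citing \cite{yamabe} (see also \cite{breuillard-green-tao}); for completeness I sketch the logical skeleton one would follow to prove it from scratch. Write $V \subset G$ for the given open symmetric set containing no nontrivial subgroup. First I would reduce to the compactly generated case: let $G'$ be the open subgroup generated by a compact symmetric neighborhood of $e$ contained in $V$. Since a topological group that has an open subgroup which is a Lie group is itself a Lie group, it suffices to show $G'$ is a Lie group, so one may assume $G = G'$ is compactly generated, hence $\sigma$-compact.

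The heart of the argument is to construct a \emph{Gleason metric}: a left-invariant metric defined near $e$, with associated norm $\Vert g \Vert := d(g,e)$, and a constant $C > 0$ for which the commutator estimate $\Vert g h g^{-1} h^{-1} \Vert \leq C \Vert g \Vert \, \Vert h \Vert$ holds on a neighborhood of $e$, with $\Vert \cdot \Vert$ comparable to the escape norm of a fixed small symmetric neighborhood. This is precisely where the no-small-subgroup hypothesis enters, through the Gleason--Yamabe lemmas --- in the form of Theorem \ref{thm:gleason} applied to the constant sequence $G_i = G$, $\phi_i = \mathrm{id}$. With such a metric available, I would introduce the space $L(G)$ of continuous one-parameter subgroups $X \colon \mathbb{R} \to G$, topologized by uniform convergence on compact sets and equipped with the operations $(sX)(t) := X(st)$ and $(X+Y)(t) := \lim_{n \to \infty} \bigl( X(t/n)\,Y(t/n) \bigr)^n$. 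The commutator estimate makes these limits exist and turns $L(G)$ into a topological $\mathbb{R}$-vector space; the NSS hypothesis forces the evaluation map $\exp \colon L(G) \to G$, $X \mapsto X(1)$, to be injective on a neighborhood of $0$; and a compactness argument using arbitrarily small NSS neighborhoods shows $L(G)$ is locally compact, hence finite-dimensional, with $\exp$ a homeomorphism from a neighborhood of $0$ onto a neighborhood of $e$.

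Finally, I would transport this chart around $G$ by left translations and verify, using a Baker--Campbell--Hausdorff type expansion whose convergence is controlled by the commutator estimate, that the transition maps and the group multiplication are smooth in these coordinates, exhibiting $G$ as a Lie group. The hard part is the middle paragraph: showing that $L(G)$ is finite-dimensional and that $\exp$ is a local homeomorphism is essentially the whole theorem and rests on Gleason's structure theory for NSS groups rather than on anything elementary, which is why in the present paper the statement is used only as a cited black box.
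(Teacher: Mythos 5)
The paper does not prove this theorem; it is stated purely as a citation to Yamabe's solution of Hilbert's fifth problem, exactly as you propose to treat it. Your decision to use it as a black box matches the paper, and the sketch you give of the standard Gleason--Yamabe argument (reduction to the compactly generated case, Gleason metric from the NSS hypothesis, finite-dimensionality of the space of one-parameter subgroups, $\exp$ as a local homeomorphism) is an accurate outline of the classical proof.
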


The following is a more detailed version of Theorem \ref{thm:largest-small} (cf. \cite[Propositions 8.5 and 9.2]{breuillard-green-tao}).

\begin{Thm}\label{thm:largest-small-ii}
    Let  $\phi _i : G_i \to G$ be a sequence of good approximations with regular neighborhoods $A_i \subset G_i$ and $A \subset G$. Assume $G$ is a Lie group, let $B_i \subset G_i$, $B (r) \subset G$ be the sets given by Theorem \ref{thm:gleason}, and  let  $H_i  : = \{ g \in G_i \vert \Vert g \Vert _{B_i} = 0 \} $. If $r > 0 $ is small enough, then
    \begin{itemize}
        \item $H_i$ is a subgroup of $G_i$ for $i$ sufficiently large.
        \item The sequence $H_i$ is small.
        \item $H_i \trianglelefteq \langle A_i \rangle $ for $i$ large enough.
        \item For any sequence $H_i ' \leq G_i$ of small subgroups, one has $H_i ' \leq H_i$ for $i$ large enough. 
        \item $G_i ' : = \langle A_i \rangle / H_i$ is a Lie group for $i$ large enough.
    \end{itemize}
\end{Thm}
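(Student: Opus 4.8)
The plan is to verify each of the five bullet points in turn, using the Gleason-type inequality from Theorem~\ref{thm:gleason} as the main engine.

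\textbf{Step 1: $H_i$ is a subgroup, and it is small.} Fix $r>0$ small enough that the conclusion of Theorem~\ref{thm:gleason} holds with constant $C_0$. Since $e\in B_i$ and $B_i$ is symmetric, $\Vert e\Vert_{B_i}=0$ and $\Vert g^{-1}\Vert_{B_i}=\Vert g\Vert_{B_i}$, so $H_i$ is symmetric. If $g,h\in H_i$, the inequality $\Vert gh\Vert_{B_i}\leq C_0(\Vert g\Vert_{B_i}+\Vert h\Vert_{B_i})=0$ shows $gh\in H_i$; hence $H_i$ is a subgroup. Note that $\Vert g\Vert_{B_i}=0$ forces $g^j\in B_i$ for all $j$, so $H_i\subseteq B_i$. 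To see $H_i$ is small (in the sense of Definition~\ref{def:ss2} with respect to the pairs $(\phi_i,B_i)$), take any sequence $h_i\in H_i$ and suppose $\phi_i(h_i)\not\to e$; passing to a subsequence, $\phi_i(h_i)\to g\neq e$ in $\overline{B}$ by Lemma~\ref{lem:convergent-sequences}. But $h_i^j\in B_i$ for every $j$, so $\phi_i(h_i^j)\to g^j\in\overline{B}(r)$ for all $j$; since $g\neq e$ and $B(r)=\exp(B_r(0))$ with $r$ small, some power $g^j$ escapes $\overline{B}(r)$ (the powers of a nontrivial element of a Lie group cannot all lie in an arbitrarily small exponential ball), a contradiction. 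Smallness with respect to $(\phi_i, A_i)$ then follows from Proposition~\ref{pro:zoom-i}.

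\textbf{Step 2: maximality.} Let $H_i'\leq G_i$ be any sequence of small subgroups. By Proposition~\ref{pro:zoom-i} they are small with respect to $(\phi_i,B_i)$ as well, so $H_i'\subseteq B_i$ for $i$ large and $\phi_i(h_i')\to e$ for any sequence $h_i'\in H_i'$. I claim $\Vert h_i'\Vert_{B_i}\to 0$ uniformly over $h_i'\in H_i'$; if not, after passing to a subsequence there are $h_i'\in H_i'$ with $\Vert h_i'\Vert_{B_i}\geq\eta>0$, i.e. some power $(h_i')^{m_i}$, with $m_i\leq 1/\eta$ bounded, lies outside $B_i$ while $(h_i')^{m_i}\in H_i'$ is small, forcing $\phi_i((h_i')^{m_i})\to e$, contradicting \eqref{item:ga-2} applied to a neighborhood of $\overline{B}$ (since $(h_i')^{m_i}\in B_i^{m_i}\subset B_i^{\lceil 1/\eta\rceil}$ the approximation is controlled). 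Hence eventually $\Vert h_i'\Vert_{B_i}<1/2$, but the escape norm takes values in $\{0\}\cup\{1/(m+1)\}$, so $\Vert h_i'\Vert_{B_i}=0$ once $i$ is large, giving $h_i'\in H_i$; running this over all sequences gives $H_i'\leq H_i$ for $i$ large.

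\textbf{Step 3: normality in $\langle A_i\rangle$, and $G_i'$ is Lie.} For normality one needs $\Vert ghg^{-1}\Vert_{B_i}=0$ whenever $h\in H_i$ and $g\in\langle A_i\rangle$; it suffices to handle $g\in A_i$ and then iterate. This is exactly the conjugation bound $\Vert ghg^{-1}\Vert_{B_i}\leq 1000\,\Vert h\Vert_{B_i}$ that Breuillard--Green--Tao prove alongside the escape inequality (the commented-out second bullet of Theorem~\ref{thm:gleason}); I would state it as part of Theorem~\ref{thm:gleason} and apply it. Once $H_i\trianglelefteq\langle A_i\rangle$, the quotient $G_i':=\langle A_i\rangle/H_i$ is a locally compact Hausdorff group, and the escape norm $\Vert\cdot\Vert_{B_i}$ descends to a genuine ``norm'' on $G_i'$ whose zero set is trivial by construction; the image of $B_i$ in $G_i'$ is then an open symmetric set containing no nontrivial subgroup (a nontrivial subgroup would lift to a small subgroup of $\langle A_i\rangle$ not contained in $H_i$, contradicting Step~2), so Theorem~\ref{thm:gleason-yamabe} shows $G_i'$ is a Lie group for $i$ large.

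\textbf{Main obstacle.} The delicate point is Step~2 (maximality): one must convert the purely topological smallness of $H_i'$ into the quantitative statement that the escape norm vanishes, and this requires carefully tracking the interplay between bounded powers, condition~\eqref{item:ga-2}, and the discreteness of the escape norm's range. A secondary technical point is that the conjugation estimate needed for normality is not literally stated in the displayed version of Theorem~\ref{thm:gleason} (it appears only in the commented-out block), so the clean writeup should promote that estimate to the statement of Theorem~\ref{thm:gleason} and cite \cite[Propositions 8.5 and 9.2]{breuillard-green-tao} for it.
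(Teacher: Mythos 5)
Your Steps 1 and 2 are essentially sound and close to the paper's argument. For the subgroup property and smallness you use exactly the paper's reasoning (Gleason's escape inequality plus the fact that a Lie group has no small subgroups, so a nontrivial limit $g\in\overline{B}$ would have a power escaping $\overline{B}$). In Step 2 you take an unnecessary detour and miscite the condition: the contradiction you want is not with \eqref{item:ga-2} (which controls images of $B_i$, not preimages) but is in fact immediate — since $H_i'$ is a \emph{group} contained in $B_i$ for large $i$, every power of every $h\in H_i'$ already lies in $B_i$, so $\Vert h\Vert_{B_i}=0$ by definition and $H_i'\subset H_i$; no limit argument or appeal to \eqref{item:ga-4} is needed. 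Your argument that $G_i'$ is a Lie group (lift a putative nontrivial subgroup of the image of $B_i$, show the lift is small, contradict maximality) is a legitimate variant of the paper's element-by-element escape argument.

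The genuine gap is in your normality step. The conjugation estimate $\Vert ghg^{-1}\Vert_{B_i}\leq 1000\,\Vert h\Vert_{B_i}$ from Breuillard--Green--Tao holds only for $g$ in a bounded power $B_i^{10}$ of the localized approximate group $B_i$, not for all $g\in A_i$. Promoting it to the statement of Theorem \ref{thm:gleason} therefore only yields that $\langle B_i\rangle$ normalizes $H_i$. But $A_i$ need not be contained in any power of $B_i$: the sets $B_i$ correspond to a tiny ball $B(r)=\exp(B_r(0))$, which generates (at most) the identity component of $G$, while $\overline{A}$ may meet other components or simply be much larger, so elements of $A_i$ outside $\langle B_i\rangle$ are not covered and "iterating" cannot start. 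The paper closes exactly this gap with a direct argument that does not use the conjugation Gleason lemma at all: if $a_i\in A_i$, $h_i\in H_i$ and $m_i:=1/\Vert a_ih_ia_i^{-1}\Vert_{B_i}<\infty$, then $(a_ih_ia_i^{-1})^{m_i}=a_ih_i^{m_i}a_i^{-1}$ with $h_i^{m_i}\in H_i$; using the already-established smallness of $H_i$, property \eqref{item:ga-3}, and convergence $\phi_i(a_i)\to a$, one gets $\phi_i(a_ih_i^{m_i}a_i^{-1})\to ae_Ga^{-1}=e_G$, so by \eqref{item:ga-4} (in its localized form for $B_i$) the conjugate power lies in $B_i$, contradicting the definition of $m_i$. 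You need this (or an equivalent) argument; as written, your proof of $H_i\trianglelefteq\langle A_i\rangle$ does not go through.
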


\begin{proof}
    The fact that $H_i$ is a subgroup follows immediately from Theorem \ref{thm:gleason}.    Consider a sequence $h_i \in H_i$. We claim that $ \phi_i (h_i) \to e_G $.    Otherwise, after passing to a subsequence, we would have $\phi_i (h_i) \to h  $ for some $h \in \overline{B} \backslash \{ e_G \}$. If $r$ was chosen small enough,  $h ^m \notin \overline{B} $ for some $m \in \mathbb{N}$, and $\phi_i (h_i ^m) \to h^m $, so $h_i ^m \notin B_i$ for $i$ large enough, meaning $\Vert h_i \Vert _{B_i} \neq 0$, a contradiction. 

    To show that $H_i \trianglelefteq \langle A_i \rangle$, we need to show that $A_i$ normalizes $H_i$ for $i$ large enough. If this is not the case, after passing to a subsequence, we can find sequences $a_i \in A_i$, $h_i \in H_i$ such that $\Vert a_i h_i a_i^{-1} \Vert_{B_i} \neq 0$ for all $i$. After further passing to a subsequence, $\phi_i (a_i ) \to a \in \overline{A}$. Let $m_i : = 1 / \Vert a_i h_i a_i ^{-1} \Vert _{B_i}$. Since $h_i^{m_i}\in H_i$ and the groups $H_i$ are small, we have
    \[ \lim_{i \to \infty } \phi_i ((a_i h_i a_i^{-1})^{m_i}) =    \lim_{i \to \infty } \phi_i (a_i h_i ^{m_i}a_i^{-1})  =ae_Ga^{-1}=e_G.             \]
    This implies  $  (a_i h_i a_i^{-1})^{m_i} \in B_i$ for $i$ large enough, contradicting the definition of $m_i$.

    Let $H_i ' \leq G_i $ be a sequence of small subgroups. If $h \in  H_i' \backslash H_i $, then $h^m \in H_i' \backslash B_i$ for $m = 1/ \Vert h \Vert _{B_i}$, but by definition $H_i ' \subset B_i$ for all but finitely many $i$'s. 

    By Theorem \ref{thm:gleason-yamabe}, to see that $G_i'$ is a Lie group, it is enough to verify that  the open set $B_i H_i \subset G_i'$ does not contain any non-trivial subgroup for $i$ large enough. If this fails, after passing to a subsequence, there would be $g_i \in B_i ^2 \backslash H_i $ with $g_i ^m \in B_i ^2 $ for all $m \in \mathbb{N}$. Let $m_i : = 1 / \Vert g_i \Vert _{B_i}$. After passing to a subsequence, we have $\phi_i (g_i ^{m_i}) \to g \in \overline{B}^2 \backslash B$. We also have $\phi_i (g^{3m_i}_{i}) \to g^3 \in \overline{B}^2$. However, if $r>0$ was chosen small enough, $g \in \overline{B}^2 \backslash B$ implies  $g^3 \notin \overline{B}^2$.  This is a contradiction, so $G_i'$ is a Lie group for $i$ large enough. 
\end{proof}

\begin{proof}[Proof of Theorem \ref{thm:largest-small-metric}]
    By Theorem \ref{thm:egh-to-ga}, the maps $\phi_i : G_i \to G$ given by the definition of equivariant Gromov--Hausdorff convergence are good approximations.  By Theorem \ref{thm:largest-small-ii}, there is a sequence of small subgroups $H_i \leq G_i  $ with the property that any other sequence of small subgroups $H_i ' \leq G_i$ satisfies $H_i ' \leq H_i$ for $i$ large enough.

    Now assume \eqref{eq:boundedly-generated} holds. By setting $r_0 > R$ in the proof of Theorem \ref{thm:egh-to-ga}, we can guarantee that $A_i$ generates $G_i$ for $i$ large enough, so again by Theorem \ref{thm:largest-small-ii}, $H_i$ is normal in $G_i$ and  $G_i/H_i$ is a Lie group for $i$ large enough. 
    
    If $K_i \leq G_i/H_i$ is a sequence of small subgroups, by Proposition \ref{pro:small}, the preimages $\tilde{K}_i \leq G_i$ are also small, so $\tilde{K}_i \leq H_i$ for $i$ large enough, and $K_i$ is trivial. This shows that the groups $G_i / H_i $  have the NSS property.  Again by Proposition \ref{pro:small}, we have
    \[       (X_i / H_i , G_i / H_i , [p_i] ) \xrightarrow{eGH} (X,G,p) ,                                    \]
    so by Theorem \ref{thm:model} the groups $G_i/H_i$  have dimension $\leq k$ for $i$ large enough. 
\end{proof}

\begin{Rem}
    Note that the reason $r$ needs to be small in Theorems \ref{thm:gleason} and \ref{thm:largest-small-ii} is to guarantee the accuracy of the first order truncation of the Baker--Campbell--Hausdorff formula, so if $G = \mathbb{R}^k$, then $r$ does not need to be small. 
\end{Rem}

\section{Good approximations and dimension}\label{sec:dimension}

In this section we prove Theorem \ref{thm:main}. We begin with the version for good approximations.

\begin{proof}[Proof of Theorem \ref{thm:model}]
Let $\psi_i : G_i \to \mathbb{R}^k$ be the good approximations given by Lemma \ref{lem:zoom-ii}, and let $B_i\subset G_i$, $B \subset \mathbb{R}^k$ be the corresponding regular neighborhoods. 

\begin{Rem}\label{rem:symmetric}
For any sequence $g_i \in B_i$, we have 
\[   \lim _{i \to \infty} ( \psi_i (g_i ) + \psi _i (g_i^{-1})  ) = 0    ,  \]
so after slightly adjusting the functions $\psi_i $, we can assume $\psi_i (g^{-1}) = \psi _i (g)^{-1}$ for all $g \in B_i$ (see Observation \ref{obs:perturbation}). Henceforth, we enforce this assumption.    
\end{Rem}

By Theorem \ref{thm:largest-small-ii}, the sets $H_i : = \{ g \in G_i \vert \Vert g \Vert _{B_i} = 0  \} $ form a sequence of small subgroups, which by hypothesis are eventually trivial. This implies that  for $i$ large enough one has
\[         \Vert g \Vert _{B_i} > 0   \text{ for all } g \in G_i \backslash \{ e \}.                    \]
Let $\mathfrak{g}_i$ the Lie algebra of $G_i$ and  define $\vert \cdot \vert _{B_i} : \mathfrak{g}_i \to \mathbb{R}$  and $\tau _{B_i} : \mathfrak{g}_i \to ( 0,  \infty ] $ as in Section \ref{sec:escape}. Recall that since $B_i$ is open, we have 
\begin{equation}\label{eq:tau-notin}
\exp ( \tau_{B_i} (\vv) \vv ) \notin B_i      
\end{equation}
for all $\vv \in \mathfrak{g}_i $ with $\vert \vv \vert _{B_i } \neq 0 $. 
\begin{Lem}\label{lem:delta-small}
    For any $ \delta > 0  $, if $i$ is large enough, then for any $\vv \in \mathfrak{g}_i$ with $\vert \vv \vert _{B_i}  \in [0, \delta  ]$,  one has
    \[      \psi _i (\exp (\vv)) \in \overline{B}_{ 3  \delta /2 } (0) .                          \]
\end{Lem}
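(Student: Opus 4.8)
The plan is to write $\exp (\vv)$ as an $N$-th power of a group element all of whose first $\approx N/\delta$ powers stay inside $B_i$, and then transfer this information through the good-approximation axioms. First I would unwind the hypothesis: if $\vert \vv \vert _i \le \delta$ then $\tau (\vv) \ge 1/\delta$, so by the definition of $\tau$ one has $\exp (t\vv) \in B_i$ for every $t \in [0, 1/\delta)$ (and for all $t \ge 0$ when $\vert \vv \vert _i = 0$). Fixing once and for all an integer $N > 3\delta$, putting $h_i := \exp (\vv / N)$ and $M := \lceil N/\delta \rceil - 1 \ge N/\delta - 1 \ge 1$, this says precisely that $h_i^{\,j} = \exp ((j/N)\vv) \in B_i$ for every $j \in \{0, 1, \dots, M\}$; in particular $h_i \in B_i$ and $h_i^{\,M} \in B_i$.

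Next I would bound $\psi_i(\exp \vv) = \psi_i(h_i^{\,N})$ using only axioms \eqref{item:ga-2} and \eqref{item:ga-3} for $\psi_i$ with regular neighborhoods $B_i$ and $B = B_1(0)$. Since these axioms are phrased in terms of arbitrary sequences, they hold in their uniform form: for each $n$ and each $\eta > 0$ there is $i_0$ such that for $i \ge i_0$ and all $g, h \in B_i^n$ one has $\vert \psi_i(gh) - \psi_i(g) - \psi_i(h) \vert \le \eta$; moreover $\vert \psi_i(e_{G_i}) \vert \le \eta$ for $i$ large by Lemma \ref{lem:convergent-sequences}. Telescoping along $e_{G_i}, h_i, h_i^{\,2}, \dots$ — with $n = 1$ for the powers up to $M$, all of which lie in $B_i$, and with $n = N$ for the powers up to $N$, all of which lie in $B_i^N$ — this yields, for $i$ large enough independently of $\vv$, that $M\,\vert \psi_i(h_i)\vert \le \vert \psi_i(h_i^{\,M})\vert + (M+1)\eta$ and $\vert \psi_i(h_i^{\,N})\vert \le N\,\vert \psi_i(h_i)\vert + (N+1)\eta$. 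Since $h_i^{\,M} \in B_i$, applying \eqref{item:ga-2} with the open set $V = B_{1+\varepsilon}(0) \supset \overline{B}$ gives $\vert \psi_i(h_i^{\,M})\vert \le 1 + \varepsilon$ for $i$ large. Combining the three inequalities, $\vert \psi_i(\exp \vv)\vert \le \tfrac{N}{M}\bigl(1 + \varepsilon + (M+1)\eta\bigr) + (N+1)\eta$.

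Finally I would close the estimate by elementary arithmetic. Since $M \ge N/\delta - 1$, one has $\tfrac{N}{M} \le \tfrac{N\delta}{N - \delta}$, and the latter tends to $\delta$ as $N \to \infty$; having chosen $N > 3\delta$ large enough we may therefore assume $\tfrac{N\delta}{N-\delta} < \tfrac{3\delta}{2}$, and then choosing $\varepsilon$ and $\eta$ small enough (depending only on $N$, hence only on $\delta$) makes the right-hand side at most $\tfrac{3\delta}{2}$, i.e. $\psi_i(\exp \vv) \in \overline{B}_{3\delta/2}(0)$. As $N$, $\varepsilon$ and $\eta$ depend only on $\delta$, so does the resulting threshold on $i$, which proves the lemma. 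I do not anticipate a genuine obstacle; the only points requiring care are to invoke \eqref{item:ga-2}, \eqref{item:ga-3} and $\psi_i(e_{G_i}) \to e_G$ in their uniform-over-$B_i$ forms so that the final ``$i$ large enough'' is truly independent of $\vv$, and to respect the order of quantifiers ($\delta$, then $N$, then $\varepsilon$ and $\eta$, then $i$).
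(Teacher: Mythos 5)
Your proof is correct, and while it rests on the same mechanism as the paper's --- namely that $\vert \vv\vert_i\leq\delta$ forces $\exp(t\vv)\in B_i$ for all $0\leq t<1/\delta$, so that approximate additivity of $\psi_i$ (condition \eqref{item:ga-3}) together with $\psi_i(B_i)\subset B_{1+\varepsilon}(0)$ (condition \eqref{item:ga-2}) pins down $\psi_i(\exp\vv)$ --- the implementation is genuinely different. The paper argues by contradiction: it extracts a subsequential limit $x$ of $\psi_i(\exp(\vv_i))$ via Lemma \ref{lem:convergent-sequences}, observes that $\psi_i(\exp(m_0\vv_i))\to m_0 x$ must remain in $\overline{B}_1(0)$ for $m_0=\lfloor 1/\delta\rfloor$, and concludes $\vert x\vert\leq 1/m_0<3\delta/2$; this only works for $\delta\leq 1/2$, and the general case is reduced to it by replacing $\vv$ with $\vv/\lceil 2\delta\rceil$. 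You instead run a direct, quantitative telescoping argument on the $N$-th root $h=\exp(\vv/N)$, which handles all $\delta$ at once and makes the uniformity in $\vv$ explicit rather than hiding it in a subsequence extraction. The one step that genuinely needs justification --- and which you correctly flag --- is the upgrade of the sequential conditions \eqref{item:ga-2} and \eqref{item:ga-3} to statements uniform over $B_i^n$; this follows from the standard diagonal argument (a failure of uniformity along a subsequence produces sequences violating the axiom) and is the same device the paper uses implicitly. Your bookkeeping is sound: $M=\lceil N/\delta\rceil-1<N/\delta\leq\tau(\vv)$ guarantees $h^j\in B_i$ for all $j\leq M$ (in particular you avoid the boundary case $t=\tau(\vv)$, where $\exp(\tau(\vv)\vv)\notin B_i$), and $N>3\delta$ is exactly the condition making $N\delta/(N-\delta)<3\delta/2$, leaving room for the $\varepsilon$- and $\eta$-errors, with all choices depending only on $\delta$ as required.
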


\begin{proof}
    We first deal with the case $\delta \in (0, 1/2]$.   Assume by contradiction there is a sequence $\vv _i \in \mathfrak{g}_i$ with $\vert \vv _i \vert _{B_i} \in [0, \delta]$, and $\psi_i (\exp (\vv_i)) \notin \overline{B}_{3 \delta  /2 } (0) $ for infinitely many $i's$. After passing to a subsequence, we can assume this happens for all $i \in \mathbb{N}$. 

    By definition of $\vert \cdot \vert _{B_i}$, we know that $g_i : = \exp (\vv _i ) \in B_i$ for all $i$, so after passing to a subsequence, we have  $ \psi_i (g_i) \to  x $    for some $x \in \overline{B}_1(0) \backslash  B_{3 \delta /2}(0) $.   Let 
    \[ m_0 : = \left\lfloor  \frac{1}{  \delta} \right\rfloor > \frac{2}{3 \delta } .\] 
    Then $g_i ^{m_0} \in B_i$, so 
    \[ \lim_{i \to \infty } \psi_i (g_i ^{m_0}) = m_0 \cdot  x \in \overline{B}_1(0) .     \]
    Therefore  
    \[     x \in \overline{B}_{1/m_0}(0) \subset B_{3 \delta /2 } (0) ,   \] 
    which is a contradiction. 

    Now consider $\delta > 0 $ arbitrary and pick $n_0 : = \lceil 2 \delta  \rceil $. For any sequence $\vv _i \in \mathfrak{ g}_i$ with $\vert \vv _i \vert _{B_i} \in [0, \delta ]$, one has 
    \[   \vert \vv _i / n_0 \vert _{B_i} \leq \delta / n_0 \leq 1/2 ,    \]
    so by the first part one has
    \[        \psi_i (\exp (\vv _i / n_0)) \in \overline{B}_{3\delta / 2 n_0} (0)                     \]
    for $i$ large enough. Therefore by \eqref{item:ga-3}, one has
    \[    \psi_i (\exp (\vv_i)) \in \overline{B}_{3 \delta / 2 }(0)                      \]
    for $i$ large enough. 
\end{proof}

\begin{Lem}\label{lem:delta-large-pre}
    For any $ \delta \in (0, 1/ 4 ] $, if $i$ is large enough, then for any $\vv \in \mathfrak{g}_i$ with $\vert \vv \vert _{B_i}  \in [\delta , 1 / 4  ] $,  one has 
    \[      \psi _i (\exp (\vv)) \notin B_{ \delta / 2} (0) .                          \]
\end{Lem}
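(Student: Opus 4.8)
The plan is to argue by contradiction, producing for large $i$ an element of $G_i$ that condition \eqref{item:ga-4} forces into $B_i$ while \eqref{eq:tau-notin} keeps it out. Suppose the statement fails: after passing to a subsequence there are $\vv_i \in \mathfrak{g}_i$ with $\vert \vv_i \vert_i \in [\delta, 1/4]$ and $\psi_i(\exp(\vv_i)) \in B_{\delta/2}(0)$. Set $g_i := \exp(\vv_i)$ and $\tau_i := \tau(\vv_i) = 1/\vert \vv_i\vert_i$, so $\tau_i \in [4, 1/\delta]$, and let $n_i$ be the largest integer with $n_i < \tau_i$; then $3 \le n_i < \tau_i \le 1/\delta$ and $\tau_i - n_i \in (0,1]$.

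First I would record the relevant membership facts. Since $\exp(t\vv_i) \in B_i$ for $0 \le t < \tau_i$, each power $g_i^j$ with $0 \le j \le n_i$ lies in $B_i$, and so does $\exp((\tau_i - n_i)\vv_i)$ (because $\tau_i - n_i \le 1 < \tau_i$); on the other hand $\exp(\tau_i\vv_i) \notin B_i$ by \eqref{eq:tau-notin}. Since $s \mapsto \exp(s\vv_i)$ is a homomorphism of $\mathbb{R}$, one has
\[ \exp(\tau_i\vv_i) = g_i^{\,n_i}\, \exp\!\big((\tau_i - n_i)\vv_i\big) , \]
which exhibits $\exp(\tau_i\vv_i)$ as an element of $B_i^{\,n_i+1} \subseteq B_i^{\,N}$, where $N := \lceil 1/\delta\rceil + 1$ is a fixed integer.

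Next I would estimate $\psi_i(\exp(\tau_i\vv_i))$. All products occurring here stay in $B_i^{\,N}$ with $N$ fixed, so iterating \eqref{item:ga-3} gives
\[ \psi_i\big(\exp(\tau_i\vv_i)\big) = n_i\,\psi_i(g_i) + \psi_i\!\big(\exp((\tau_i - n_i)\vv_i)\big) + o(1) \]
as $i \to \infty$. The first term has norm $< (1/\delta)(\delta/2) = 1/2$ since $n_i < 1/\delta$ and $\vert \psi_i(g_i)\vert < \delta/2$. For the second term, $\vert (\tau_i - n_i)\vv_i\vert_i = (\tau_i - n_i)\vert \vv_i\vert_i = 1 - n_i/\tau_i \le 1/4$ (here $n_i \ge \tau_i - 1$ and $\tau_i \ge 4$), so Lemma \ref{lem:delta-small} applied with parameter $1/4$ gives $\psi_i(\exp((\tau_i - n_i)\vv_i)) \in \overline{B}_{3/8}(0)$ for $i$ large. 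Hence $\vert \psi_i(\exp(\tau_i\vv_i))\vert < 1/2 + 3/8 + o(1) < 9/10$ for $i$ large, so $\psi_i(\exp(\tau_i\vv_i))$ lies in the fixed compact set $K := \overline{B}_{9/10}(0) \subset B_1(0) = B$.

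Finally I would invoke \eqref{item:ga-4} with the fixed $n = N$ and the fixed compact $K$: since $\exp(\tau_i\vv_i) \in B_i^{\,N}$ and $\psi_i(\exp(\tau_i\vv_i)) \in K$, for $i$ large we conclude $\exp(\tau_i\vv_i) \in B_i$, contradicting the second paragraph. The step I expect to need the most care is the bookkeeping around the non-integer quantity $\tau_i$: choosing $n_i$ and splitting $\tau_i\vv_i = n_i\vv_i + (\tau_i - n_i)\vv_i$ so that one summand is a bounded integer power of $g_i$ while the other satisfies $\vert (\tau_i - n_i)\vv_i\vert_i \le 1/4$ (the input needed for Lemma \ref{lem:delta-small}), all while keeping every product inside $B_i^{\,N}$ for $N$ independent of $i$, so that \eqref{item:ga-3} and \eqref{item:ga-4} apply. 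This is also where the two hypotheses on $\vert \vv\vert_i$ are used: $\vert \vv\vert_i \ge \delta$ bounds $n_i$ and $N$, while $\vert \vv\vert_i \le 1/4$ forces $(\tau_i - n_i)/\tau_i \le 1/4$.
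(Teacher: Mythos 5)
Your proof is correct and follows essentially the same route as the paper: decompose $\tau(\vv_i)$ into an integer part plus a remainder of size at most $1$, bound the integer-part contribution by $1/2$ and the remainder's contribution by $3/8$ via Lemma \ref{lem:delta-small}, and then use \eqref{item:ga-3} and \eqref{item:ga-4} to force $\exp(\tau(\vv_i)\vv_i)\in B_i$, contradicting \eqref{eq:tau-notin}. The only (immaterial) differences are that you take the remainder in $(0,1]$ rather than $[0,1)$ and bound $n_i\psi_i(g_i)$ directly instead of passing to a subsequence where the integer part is constant.
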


\begin{proof}
    Assume by contradiction there is a sequence $\vv _i \in \mathfrak{g}_i$ with $\vert \vv _i \vert _{B_i} \in [ \delta, 1 / 4 ]  $, and $\psi_i (\exp (\vv_i)) \in B_{\delta /2 } (0) $ for infinitely many $i's$.  After passing to a subsequence, we can assume this happens for all $i\in \mathbb{N}$. Moreover,  we can assume  $   \psi_i (\exp(\vv_i)) \to  x $  for some $x \in \overline{B}_{  \delta / 2 }(0) $.   Set  
    \[  \tau_{B_i} (\vv _i )   =  m_i   +  s_i ,\] 
    with $m _i \in \mathbb{N}$ and $ s _i \in [0, 1 )$. Since $m_i \leq  1 / \delta $, after passing to a subsequence, we can assume $m_i$ actually doesn't depend on $i$.  By \eqref{item:ga-3},  one has
    \[  \lim_{i \to \infty } \left[  \psi_i (\exp (s_i \vv_i)) + \psi_i (\exp (m_i\vv_i))  - \psi_i  (\exp (\tau_{B_i} (\vv_i) \vv_i)) \right]  = 0  .                \]
    By Lemma \ref{lem:delta-small}, the first summand lies in $B_{ 3/8 }(0)$ for $i$ large enough, and by Lemma \ref{lem:convergent-sequences} the second summand converges to $m_i \cdot x \in \overline{B}_{1/2}(0)$. Therefore the last summand lies in $\overline{B}_{9/10}(0)$ for $i$ large enough, which by \eqref{item:ga-4} means $\exp (\tau_{B_i} (\vv_i)\vv_i) \in B_i$, contradicting \eqref{eq:tau-notin}.
\end{proof}

\begin{Lem}\label{lem:delta-large}
    For any $ \delta > 0  $, if $i$ is large enough, then for any $\vv \in \mathfrak{g}_i$ with $\vert \vv \vert _{B_i}  \in [\delta , 1 / \delta ] $,  one has 
    \[      \psi _i (\exp (\vv)) \notin B_{  \delta / 4 } (0) .                          \]
\end{Lem}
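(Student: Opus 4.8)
The plan is to bootstrap Lemma~\ref{lem:delta-large} from Lemma~\ref{lem:delta-large-pre} by rescaling $\vv$ so that its $\vert\cdot\vert_i$-norm lands in the regime covered by the latter, and then passing back by raising to an appropriate power via condition~\eqref{item:ga-3}.

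First I would record the scaling identity $\vert t\vv\vert_i = t\,\vert\vv\vert_i$ for $t>0$, which is immediate from $\exp(s\cdot t\vv)=\exp((st)\vv)$ together with the definition of $\tau$. We may assume $\delta\le 1$, since otherwise $[\delta,1/\delta]=\emptyset$ and there is nothing to prove. Set $n_0:=\lceil 4/\delta\rceil$ and $\delta':=\delta/n_0$. Then $\delta'\le \delta^2/4\le 1/4$, and whenever $\vert\vv\vert_i\in[\delta,1/\delta]$ one has $\vert\vv/n_0\vert_i=\vert\vv\vert_i/n_0\in[\delta',(1/\delta)/n_0]\subset[\delta',1/4]$.

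Then I would argue by contradiction, as is done throughout this section. Suppose there is a sequence $\vv_i\in\mathfrak{g}_i$ with $\vert\vv_i\vert_i\in[\delta,1/\delta]$ and $\psi_i(\exp(\vv_i))\in B_{\delta/4}(0)$ for infinitely many $i$; pass to that subsequence and set $g_i:=\exp(\vv_i/n_0)$. Since $\vert\vv_i/n_0\vert_i\le 1/4$ we have $\tau(\vv_i/n_0)\ge 4>1$, so $g_i\in B_i$. By Lemma~\ref{lem:convergent-sequences}, after a further subsequence $\psi_i(g_i)\to x$ for some $x$, and $\psi_i(g_i^{\,n_0})\to n_0\,x$; as $g_i^{\,n_0}=\exp(\vv_i)$, this reads $\psi_i(\exp(\vv_i))\to n_0\,x$, so the standing assumption forces $\vert x\vert\le \delta/(4n_0)=\delta'/4$. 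On the other hand $\vert\vv_i/n_0\vert_i\in[\delta',1/4]$ and $\delta'\in(0,1/4]$, so Lemma~\ref{lem:delta-large-pre} applied with $\delta'$ in place of $\delta$ gives $\psi_i(g_i)=\psi_i(\exp(\vv_i/n_0))\notin B_{\delta'/2}(0)$ for $i$ large; since $\mathbb{R}^k\setminus B_{\delta'/2}(0)$ is closed, $\vert x\vert\ge \delta'/2$ in the limit, contradicting $\vert x\vert\le\delta'/4$.

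I do not expect a serious obstacle: the whole content is the scaling trick together with Lemma~\ref{lem:delta-large-pre}, and condition~\eqref{item:ga-3} (as packaged in Lemma~\ref{lem:convergent-sequences}) is precisely what relates $\psi_i(\exp(\vv_i))$ to $n_0\,\psi_i(\exp(\vv_i/n_0))$. The only points requiring attention are the constant bookkeeping needed to keep $\delta'\le 1/4$ so that Lemma~\ref{lem:delta-large-pre} is applicable, and the open-versus-closed ball distinction when taking limits, which is exactly why the excluded radius in the conclusion drops from $\delta/2$ to $\delta/4$.
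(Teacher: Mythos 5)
Your proof is correct and follows essentially the same route as the paper's: rescale by $n_0=\lceil 4/\delta\rceil$ to land in the range of Lemma~\ref{lem:delta-large-pre}, then use Lemma~\ref{lem:convergent-sequences} (condition~\eqref{item:ga-3}) to compare $\psi_i(\exp(\vv_i))$ with $n_0\,\psi_i(\exp(\vv_i/n_0))$ and derive the contradiction. The only difference is cosmetic bookkeeping (you phrase the clash as $\vert x\vert\le\delta'/4$ versus $\vert x\vert\ge\delta'/2$, while the paper compares $m_0y\notin B_{\delta/2}(0)$ with the assumed containment in $B_{\delta/4}(0)$).
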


\begin{proof}
If the result fails, after passing to a subsequence one can find  $\vv _i \in \mathfrak{g}_i$ with $\vert \vv _i \vert _{B_i} \in [\delta , 1 /\delta]$ but 
\begin{equation}\label{eq:psi-shrink}
    \psi _i (\exp (\vv _i ) ) \in B_{ \delta / 4 } (0)  \text{ for all }i.
\end{equation}
Set $m_0 : =  \lceil 4 / \delta  \rceil $ and notice that $\vert \vv_i / m_0 \vert _{B_i}  \in [ \delta / m_0 , 1/4  ]$ for all $i$, so by Lemma \ref{lem:delta-large-pre} we have 
    \[   \psi _i ( \exp (\vv_i / m_0 )  ) \notin B_{\delta / 2 m_0 } (0)   \]
    for $i$ large enough. After passing to a subsequence, we can assume $\psi_i (\exp (\vv _i/ m_0 ))\to y$ for some  $y \notin  B_{\delta / 2m_0}(0)$. Hence
\[       \psi_i (\exp(\vv _i ))\to m_0y  \notin  B_{\delta / 2 } (0),        \]
which contradicts \eqref{eq:psi-shrink}.  
\end{proof}

\begin{Lem}\label{lem:quasi-norm}
    If $i$ is sufficiently large, for $\vv_1, \ldots , \vv_m \in \mathfrak{g}_i$ one has
    \begin{equation}\label{eq:almost-triangle}
 \vert \vv _1 + \ldots + \vv_m \vert _{B_i} \leq 2 C_0 \sum_{j =1 }^m   \vert \vv_j \vert _{B_i}  .         
    \end{equation}
\end{Lem}

\begin{proof}
   By  Theorem \ref{thm:gleason}, for each $t > 0 $ and  $\ell \in \mathbb{N}$ one has 
   \[      \Vert    \exp (t \vv_1 / \ell ) \cdots (t\vv_m / \ell )   \Vert _{B_i} \leq  C_0 \sum_{j=1}^m \Vert \exp (t \vv_j / \ell ) \Vert _{B_i}    ,   \]
    provided $i$ is large enough. Using the formula 
    \[    \exp (t ( \vv _1 + \ldots +  \vv_m )  )  = \lim_{\ell \to \infty} \left(   \exp (t \vv_1 / \ell) \cdots \exp (t\vv_m / \ell )    \right) ^{\ell}   ,   \]
     Lemma \ref{lem:norm-continuity}, and \eqref{eq:escape-less-than-norm}, we deduce that
    \begin{eqnarray*}
     \Vert  \exp (t ( \vv _1 + \ldots +  \vv_m )  )   \Vert_{B_i} & \leq &  2 \limsup_{\ell \to \infty} \Vert  \left(   \exp (t \vv_1 / \ell) \cdots \exp (t\vv_m / \ell )    \right) ^{\ell} \Vert_{B_i} \\
     & \leq & 2 C_0 \limsup _{\ell \to \infty} \, \ell  \sum _{j = 1} ^m \Vert \exp (t \vv _ j / \ell ) \Vert _{B_i}\\
     & \leq & 2 t  C_0  \sum_{j = 1} ^m    \vert \vv _j \vert_{B_i}  . 
     \end{eqnarray*}     
     Dividing over $t$ and taking the limit as $t \to 0$, by \eqref{eq:norm-definition}  we conclude \eqref{eq:almost-triangle}.
\end{proof}

\begin{Lem}\label{lem:norm}
There is a genuine norm $\Vert \cdot \Vert _i : \mathfrak{g}_i \to \mathbb{R}$ with
\[    \vert \vv \vert_{B_i}  \leq  \Vert \vv \Vert _i \leq 2 C_0 \vert \vv \vert _{B_i}                          \]
for all $\vv \in \mathfrak{g}_i$.     
\end{Lem}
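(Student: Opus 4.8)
The plan is to realize $\Vert\cdot\Vert_i$ as the Minkowski functional of a convex symmetric body trapped between two homothetic copies of the unit sublevel set $U_i := \{\vv\in\mathfrak g_i : \vert\vv\vert_i\le 1\}$ of $\vert\cdot\vert_i$. First I would record the elementary features of $\vert\cdot\vert_i$, valid for $i$ large. Since $B_i$ is symmetric, $\tau(-\vv)=\tau(\vv)$, so $\vert\cdot\vert_i$ is absolutely homogeneous, $\vert s\vv\vert_i=|s|\,\vert\vv\vert_i$; in particular the Minkowski functional of $U_i$ is exactly $\vert\cdot\vert_i$, and $U_i$ is symmetric. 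Since $B_i$ is an open neighborhood of $e_{G_i}$ and $(\vv,t)\mapsto\exp(t\vv)$ is continuous with $\exp(0)=e_{G_i}$, compactness of $[0,1]$ yields a neighborhood $W\subset\mathfrak g_i$ of $0$ on which $\vert\cdot\vert_i\le 1$, so $U_i$ is a neighborhood of $0$. Finally, recall from the proof of Theorem \ref{thm:model} that $\Vert g\Vert_{B_i}>0$ for every $g\in G_i\setminus\{e\}$ once $i$ is large; if $\vert\vv\vert_i=0$ then $\tau(\vv)=\infty$, hence $\exp(t\vv)\in B_i$ for all $t\ge 0$, so $\Vert\exp(\vv)\Vert_{B_i}=0$ and $\exp(\vv)=e_{G_i}$, and applying this to $\vv/k$ for $k$ large enough that $\exp$ is injective near $\vv/k$ forces $\vv=0$. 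Thus $\vert\cdot\vert_i$ is positive definite for $i$ large.

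Next I would set $K_i := \conv\!\big(\tfrac{1}{2C_0}U_i\big)$ and take $\Vert\cdot\Vert_i$ to be its Minkowski functional. The inclusion $\tfrac{1}{2C_0}U_i\subseteq K_i$ is immediate and gives $\Vert\vv\Vert_i\le 2C_0\,\vert\vv\vert_i$. The only substantive step is the reverse inclusion $K_i\subseteq U_i$: by Carath\'eodory, any $\vv\in K_i$ is a finite convex combination $\vv=\sum_j\lambda_j\ww_j$ with $\lambda_j\ge 0$, $\sum_j\lambda_j=1$ and $\vert\ww_j\vert_i\le\tfrac{1}{2C_0}$, and then Lemma \ref{lem:quasi-norm} applied to the vectors $\lambda_j\ww_j$ gives, for $i$ large,
\[
  \vert\vv\vert_i=\Big\vert\sum_j\lambda_j\ww_j\Big\vert_i\le 2C_0\sum_j\lambda_j\,\vert\ww_j\vert_i\le\sum_j\lambda_j=1 .
\]
Hence $K_i\subseteq U_i$, which yields $\vert\vv\vert_i\le\Vert\vv\Vert_i$ and completes the two-sided bound.

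It then remains to check that $\Vert\cdot\Vert_i$ is an honest norm. Since $K_i$ is convex and contains the neighborhood $\tfrac{1}{2C_0}W$ of $0$, its Minkowski functional is finite, convex and positively homogeneous of degree one; since $K_i$ is symmetric it is absolutely homogeneous; and $\Vert\vv\Vert_i\ge\vert\vv\vert_i>0$ for $\vv\ne 0$ by positive definiteness of $\vert\cdot\vert_i$. A finite, absolutely homogeneous, positive definite, convex function on the finite-dimensional space $\mathfrak g_i$ is a norm, the triangle inequality being $\Vert\vv+\ww\Vert_i=2\,\Vert\tfrac{\vv+\ww}{2}\Vert_i\le\Vert\vv\Vert_i+\Vert\ww\Vert_i$ by convexity and homogeneity. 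The main (essentially only) obstacle is the inclusion $K_i\subseteq U_i$, and this is exactly where the constant $2C_0$ and the quasi-subadditivity of $\vert\cdot\vert_i$ from Lemma \ref{lem:quasi-norm} are needed; the rest is routine bookkeeping with gauges of convex sets.
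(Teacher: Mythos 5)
Your proof is correct and is essentially the paper's argument: both take the Minkowski functional of the convex hull of the unit ball of $\vert\cdot\vert_i$ and use Lemma \ref{lem:quasi-norm} to trap that hull between two homothets of the ball. You simply supply the routine verifications (absolute homogeneity, positive definiteness via triviality of $H_i$, the Carath\'eodory step behind $\conv(D_i)\subset 2C_0 D_i$) that the paper leaves implicit, and your rescaling by $\tfrac{1}{2C_0}$ puts the gauge in exactly the normalization stated in the lemma.
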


\begin{proof}
    Set $D_i: =  \{  \vv \in \mathfrak{g}_i  \vert \vert \vv \vert_{B_i} \leq 1  \} $ and define 
    \[   \Vert \vv \Vert _i  : =  \inf \{ \lambda \geq  0 \vert   \vv \in \lambda \cdot \conv ( D_i)  \}  ,         \]
where $\conv (D_i)$ denotes the convex hull of $D_i$.    From Lemma \ref{lem:quasi-norm}, we have 
    \[ D_i \subset \conv (D_i) \subset 2 C_0 D_i ,\] 
    so the result follows. 
\end{proof}
Let $n_i : = \dime (G_i)$ and identify $\mathbb{S}^{n_i-1} $ with $\{ \vv \in \mathfrak{g}_i \, \vert \, \Vert \vv \Vert _i = 1 /2  \}$.  Define $f_i : \mathbb{S}^{n_i-1} \to \mathbb{R}^k$ as 
\[    f_i (\vv) : = \psi _i ( \exp (\vv) ).    \]
Note that $f_i (- \vv) =  - f_i (\vv)$ for all $\vv \in \mathbb{S}^{n_i-1}$ (see Remark \ref{rem:symmetric}). By Lemma \ref{lem:norm}, we have 
\[     1/ 4 C_0 \leq   \vert \vv \vert _{B_i}  \leq 1/2                 \]
for all $\vv \in \mathbb{S}^{n_i-1}$, so  Lemma \ref{lem:delta-large} implies
\[   f_i (\mathbb{S}^{n_i-1}) \cap B_{1/ 16C_0}(0) = \emptyset                 \]
for $i$ large enough.

By Corollary \ref{cor:approx-continuity}, $\psi _i : B_i  \to \mathbb{R}^k$ is $1/ 32C_0$-continuous for $i$ large enough, so $f_i :  \mathbb{S}^{n_i-1} \to \mathbb{R}^k$ is also $1/ 32C_0$-continuous for such $i$'s.  By Theorem \ref{thm:abu}, we have $k \geq n_i$ for $i$ large enough, concluding the proof of Theorem \ref{thm:model}.     
\end{proof}

\begin{proof}[Proof of Theorem \ref{thm:main}]
    By the Myers--Steenrod theorem, $G_i$ is a Lie group, and by \cite[Theorem 1.14]{colding-naber} (or \cite{guijarro-santos, sosa} for RCD spaces)  so is $G$. 
    By Theorem \ref{thm:egh-to-ga}, the maps $\phi_i : G_i \to G$ given by the definition of equivariant Gromov--Hausdorff convergence are good approximations. By \cite[Theorem 0.8]{pan-rong} (or \cite[Theorem 93]{santos-zamora} for RCD spaces), the groups $G_i$ have the NSS property. By Theorem \ref{thm:model}, we conclude 
    \[     \dime (G) \geq \limsup _{i \to \infty} \dime (G_i) .  \]
\end{proof}

\section{$\rcd$ spaces with large isometry groups}\label{sec:rcd-proofs}

In this section, we prove the results from Section \ref{sec:rcd}. We begin with an elementary observation regarding groups acting transitively on $\rcd$ spaces, which easily follows from \cite[Theorem 2.1]{fukaya}.

\begin{Lem}\label{lem:trans}
    Let $(X, d, \mm )$ be an $\rcd (K,N)$ space and let $G \leq \iso (X)$ be a closed subgroup. Then the following are equivalent.
    \begin{enumerate}
        \item $G$ acts transitively on $X$.\label{item:trans-1}
        \item There is $p \in X$ and a sequence  $\lambda_i \to \infty$ such that 
        \[     (   \lambda_i X , G , p   )  \xrightarrow{eGH} (Y, G_{\infty}, q)  ,           \]
        with $Y/G_{\infty} $ compact.  \label{item:trans-2}
    \end{enumerate}
\end{Lem}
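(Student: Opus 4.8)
The plan is to deduce both implications from the fact that equivariant Gromov--Hausdorff convergence descends to Gromov--Hausdorff convergence of the quotient spaces, which is \cite[Theorem 2.1]{fukaya}: if $(X_i, G_i, p_i) \xrightarrow{eGH} (X_\infty, G_\infty, p_\infty)$, then $(X_i/G_i, [p_i]) \xrightarrow{pGH} (X_\infty/G_\infty, [p_\infty])$. Two elementary observations will be used throughout. First, rescaling commutes with taking quotients: since $G$ acts isometrically on $\lambda X$ with the same orbits as on $X$, one has $(\lambda X)/G = \lambda(X/G)$ as pointed metric spaces. Second, $X/G$ is a proper geodesic space, being the quotient of one by a closed group of isometries, and $G$ acts transitively on $X$ if and only if $X/G$ is a single point.

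For \eqref{item:trans-1} $\Rightarrow$ \eqref{item:trans-2}, assume $X/G$ is a point and fix any $p \in X$ and any $\lambda_i \to \infty$. For $i$ large, $\lambda_i X$ is an $\rcd(\lambda_i^{-2}K, N)$ space with $\lambda_i^{-2}K \to 0$, so these spaces have a uniform lower Ricci bound; by the Gromov precompactness theorem for pointed $\rcd$ spaces, a subsequence satisfies $(\lambda_i X, p) \xrightarrow{pGH} (Y, q)$ for some proper metric space $Y$. By Theorem \ref{thm:fy}, after passing to a further subsequence there is a closed $G_\infty \leq \iso(Y)$ with $(\lambda_i X, G, p) \xrightarrow{eGH} (Y, G_\infty, q)$. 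Since $(\lambda_i X)/G = \lambda_i(X/G)$ is a one-point space for every $i$, its pointed Gromov--Hausdorff limit $Y/G_\infty$ is also a one-point space, in particular compact.

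For \eqref{item:trans-2} $\Rightarrow$ \eqref{item:trans-1}, suppose $(\lambda_i X, G, p) \xrightarrow{eGH} (Y, G_\infty, q)$ with $W := Y/G_\infty$ compact, and argue by contradiction: if $X/G$ is not a point, there is $w_0 \in X/G$ with $d([p], w_0) =: \delta > 0$. By \cite[Theorem 2.1]{fukaya} we have $\lambda_i(X/G) = (\lambda_i X)/G \xrightarrow{pGH} W$, with basepoint $[p]$ converging to $q$. Fix $M > 0$ and take $i$ with $\lambda_i \delta \geq M$. Then $\lambda_i(X/G)$ is geodesic and contains $w_0$ at distance $\geq M$ from $[p]$, so a unit-speed minimizing geodesic from $[p]$ to $w_0$ provides a point $w_i$ with $d([p], w_i) = M$. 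Transferring $w_i$ through the Gromov--Hausdorff approximations, its images lie in the compact space $W$ at distance $\to M$ from $q$, so a subsequence converges to a point of $W$ at distance exactly $M$ from $q$. Since $M > 0$ was arbitrary, $W$ is unbounded, contradicting compactness. Hence $X/G$ is a point and $G$ acts transitively.

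The only delicate point is the last step of the second implication: passing from the points $w_i \in \lambda_i(X/G)$ at distance $M$ from the basepoint to an honest point of $W$ at distance $M$ from $q$. This uses both the compactness of $W$ and the fact that in a geodesic space the distances to the basepoint fill an interval, which is precisely what produces the $w_i$ in the first place.
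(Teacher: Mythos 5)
Your proof is correct and follows essentially the same route the paper intends: the lemma is deduced from Fukaya's theorem that eGH convergence descends to pGH convergence of quotients, together with the observation that $(\lambda X)/G = \lambda(X/G)$ and that a non-trivial geodesic quotient blows up to an unbounded space. Your write-up simply fills in the compactness/precompactness details that the paper leaves implicit.
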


\begin{proof}[Proof of Theorem \ref{thm:max-symmetry}] 
    Let $x$ be a regular point and take $\lambda _i \to \infty$. By Theorem \ref{thm:fy}, after taking a subsequence, we have
    \[   (\lambda_i X, G, x) \xrightarrow{eGH} (\mathbb{R}^n, G_{\infty}, 0 )        \]
    for some closed subgroup $G_{\infty} \leq \iso (\mathbb{R}^n) $. By Theorem \ref{thm:main} and Remark \ref{rem:rcd}, we have 
    \begin{equation}\label{eq:max-symmetry-2}
      \dime (G) \leq \dime (G_{\infty} ) \leq \dime (\iso (\mathbb{R}^n)) = \frac{n(n+1)}{2} .    
    \end{equation}
    If equality holds throughout \eqref{eq:max-symmetry-2}, then $G_{\infty}$ contains the identity connected component of $\iso (\mathbb{R}^n)$, so it acts  transitively on $\mathbb{R}^n$.  By  Lemma \ref{lem:trans}, $G$ acts transitively on $X$, making it a homogeneous Riemannian manifold of dimension $n$. Then the rigidity of the smooth case applies. 
\end{proof}

Before we prove Theorem \ref{thm:not-transitive}, we need a basic result from Euclidean geometry.

\begin{Lem}\label{lem:euclidean-nt}
    Let $G \leq \iso (\mathbb{R}^n)$ be a closed subgroup not acting transitively. Then 
    \[   \dime (G) \leq \frac{n(n-1)}{2} ,   \]
    with equality only if $\mathbb{R}^n/G$ is $1$-dimensional.
\end{Lem}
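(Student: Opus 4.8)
The plan is to use the semidirect-product structure $\iso(\mathbb{R}^n)=\mathbb{R}^n\rtimes O(n)$ together with the orbit--stabilizer dimension formula. Since $G$ is closed in the Lie group $\iso(\mathbb{R}^n)$ it is itself a Lie group, and its action on $\mathbb{R}^n$ is proper, so every orbit is closed. Because $\mathbb{R}^n$ is connected and $G$ is not transitive, $G$ has no open orbit (an $n$-dimensional orbit has open image and, by properness, is closed, hence equals $\mathbb{R}^n$); therefore $\dime(G\cdot p)\le n-1$ for all $p\in\mathbb{R}^n$. The cases $n\le 1$ being trivial, assume $n\ge 2$.

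Fix $p\in\mathbb{R}^n$ and set $d:=\dime(G\cdot p)\in\{0,\dots,n-1\}$. Identifying the isometries fixing $p$ with their linear parts, the stabilizer $G_p$ becomes a subgroup of $O(n)$ that preserves $T_p(G\cdot p)$ and its orthogonal complement, so $G_p$ embeds into $O(d)\times O(n-d)$ and $\dime(G_p)\le\binom d2+\binom{n-d}2$. Using $\dime(G)=d+\dime(G_p)$ and a one-line computation,
\[
\dime(G)\le d+\binom d2+\binom{n-d}2=\binom n2+d\,(d-n+1)\le\binom n2 ,
\]
where the last step uses $0\le d\le n-1$; this is the desired bound $\dime(G)\le n(n-1)/2$.

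For the rigidity statement, suppose $\dime(G)=\binom n2$. Then the displayed estimate forces $d(p):=\dime(G\cdot p)\in\{0,n-1\}$ for every $p$, since $d\,(d-n+1)=0$ with $0\le d\le n-1$ only when $d\in\{0,n-1\}$. The set $F:=\{p:d(p)=0\}$ equals the fixed-point set of the identity component $G^{\circ}$ (if $d(p)=0$ then $\dime(G_p)=\dime(G)$, so $G^{\circ}\le G_p$; the converse is clear), hence is an affine subspace of $\mathbb{R}^n$; it is a \emph{proper} subspace, for otherwise $G^{\circ}$ would fix a nonempty open set and be trivial, contradicting $\dime(G^{\circ})=\binom n2>0$. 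Thus $\mathbb{R}^n\setminus F$ is open and dense, and every point of it lies on an orbit of dimension exactly $n-1$; in particular the principal orbits have codimension $1$, so they project onto a $1$-dimensional manifold that is dense in the orbit space $X:=\mathbb{R}^n/G$. Since $X$ is an Alexandrov space (the quotient of the nonnegatively curved complete manifold $\mathbb{R}^n$ by a proper isometric action), its dimension coincides with that of this dense open subset, namely $1$, which is the claimed rigidity.

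The argument is elementary and short. The only part that requires a little care is the rigidity case: one must verify that the exceptional set $F$ has empty interior and then legitimately read off $\dime(X)=1$ from the behaviour of the action on the complementary open dense set, for which the cleanest tools are the principal-orbit theorem for proper isometric actions and the fact that $\mathbb{R}^n/G$ is an Alexandrov space whose dimension is detected on any open subset.
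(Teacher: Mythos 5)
Your proof is correct and follows essentially the same route as the paper: bound $\dime(G)$ by the orbit dimension $d$ plus the dimension of the stabilizer sitting inside $O(d)\times O(n-d)$, and observe that $d(d+1)/2+(n-d)(n-d-1)/2\le \binom{n}{2}$ with equality only for $d\in\{0,n-1\}$. Your treatment of the equality case is in fact a bit more careful than the paper's (which passes directly to a codimension-one orbit and a perpendicular line), since you explicitly rule out the degenerate possibility $d=0$ on a dense set before reading off that the quotient is one-dimensional.
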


\begin{proof}
Fix an orbit $M ^k \subset \mathbb{R}^n$  and $p \in M$.  The action $G \to \iso (M)$ has Kernel contained in 
\[ \{ g \in \iso (\mathbb{R}^n) \vert  \, g p = p , \, d_pg \vert _{TpM} = \Id_{TpM} \} \cong O (n-k). \]
Then a direct computation shows
\begin{eqnarray*}
  \dime (G) & \leq &      \dime (\iso (M) ) + \dime (O (n-k))         \\
  &\leq & \frac{k (k+1)}{2} + \frac{(n-k)(n-k-1)}{2} \\
  &\leq&  \frac{n (n-1)}{2} ,     
\end{eqnarray*}
with equality only if $\dime (M) = n-1$. In such a case,  any line perpendicular to $M$ projects surjectively onto $\mathbb{R}^n/G$.
\end{proof}

\begin{proof}[Proof of Theorem \ref{thm:not-transitive}] 
    Let $x$ be a regular point and take $\lambda _i \to \infty$. By Theorem \ref{thm:fy}, after taking a subsequence, we have
    \[   (\lambda_i X, G, x) \xrightarrow{eGH} (\mathbb{R}^n, G_{\infty}, 0 )        \]
    for some closed subgroup $G_{\infty} \leq \iso (\mathbb{R}^n) $.  By Lemma \ref{lem:trans}, $G_{\infty}$ does not act co-compactly on $\mathbb{R}^n$, so Lemma \ref{lem:euclidean-nt} implies \eqref{eq:nt-max-symmetry}, with equality only if $\mathbb{R}^n / G_{\infty}$ is isometric to a real interval. 
    We will now show that the quotient space $X/G$ is a $1-$dimensional manifold, possibly with boundary.
Let $p\colon (X,d) \rightarrow (X/G,d^\ast)$ be the quotient map and take $[x_0]\in X/G, r>0.$ Let
\[
\Gamma := \lbrace \gamma \in \Geo(X/G)\,|\, \gamma_0\in B_r([x_0]), \gamma_1 \in B_{2r}([x_0])-B_r([x_0])  \rbrace
\]
Via a selection argument we can find a lift $\Tilde{\Gamma}\subset \Geo(X).$ Notice that given a geodesic $\Tilde{\gamma}\colon [0,1]\rightarrow X$ and some $g\in G$ the curve $(g\Tilde{\gamma})_t:= g\Tilde{\gamma}_t$ is a geodesic. Let. $G(\Tilde{\Gamma})= \cup_{g\in G}g\Tilde{\Gamma},$ $x_0\in p^{-1}([x_0]),$ and notice that
\[
e_0(G(\Tilde{\Gamma}))\cap B_r(x_0) = B_r(x_0),
\]
where $e_0\colon \Geo(X)\rightarrow X$ is the evaluation map at $t=0.$
We now take the measure $\mu_0 := \frac{\mathfrak{m}\llcorner B_{r}(x_0)}{\mathfrak{m}(B_r(x_0))}$ and a sequence of totally atomic measures $\nu_n:=\sum_{i=1}^n\alpha_i \delta_{x_{i,n}}  $ satisfying:
\begin{itemize}
    \item $x_{i,n}= \Tilde{\gamma}_0$ for some $\Tilde{\gamma}\in G(\Tilde{\Gamma}),$
    \item $\supp \nu_n\subset \supp \nu_{n+1}$ for all $n,$
    \item $\mathbb{W}_2(\nu_n,\mu_0)\rightarrow 0$ as $n\rightarrow \infty.$
\end{itemize}
For each $\nu_n$ we define a measure $\eta_n$ in the following way: Given an atom $x_{i,n}$ of $\nu_n$ pick a geodesic $\Tilde{\gamma}^{1,n}\in G(\Tilde{\Gamma})$ such that $\Tilde{\gamma}^{1,n}_0=x_{i,n},$ then form the measure
\[
\eta_{n}:= \sum_{i=1}^n \alpha_i\delta_{y_{i,n}}, \quad \text{ where } y_{i,n}= \Tilde{\gamma}^{i,n}_1.
\]
Observe that the supports of the measures $\eta_n$ are contained in the closure of the ball $B_{2r}(x_0)$ so then the sequence $\lbrace \eta_n \rbrace_{n\in \mathbb{N}}$ is tight. We can then assume, by passing to a subsequence if necessary, that they converge to some measure $\mu_1.$
By construction of these measures we have that 
$\mathbb{W}_2(\nu_n,\eta_n)= \mathbb{W}_2(p_{\#}\nu_n,p_{\#}\eta_n),$ meaning that the transport is done by moving along geodesics that project to geodesics in $X/G.$ Taking the limit it follows that the transport between $\mu_0$ and $\mu_1$ is also realized by moving along these kinds of geodesics. If we take $\mu_{1/2}$ the midpoint between $\mu_0$ and $\mu_1,$ as $\mu_0\ll \mathfrak{m},$ it follows that $\mu_{1/2}\ll \mathfrak{m}$ and so $\mathfrak{m}(\supp \mu_{1/2})>0.$
Recall that $\mathcal{R}_n$ denotes the regular set of $X$ and that it is of full measure, then $\mathfrak{m}(\mathcal{R}_n\cap \supp\mu_{1/2} )> 0.$ Hence we can find a point $x\in X$ that is both  regular and such that there exists a geodesic $\Tilde{\gamma}$
satisfying that $\Tilde{\gamma}_{1/2}=x$ and that its projection, $\gamma,$ is still a geodesic in $X/G.$ Observe that the only tangent space at $[x]$ must be $\mathbb{R}.$

Let $\lambda_i\rightarrow \infty$ and $r>0.$ Take 
\[
[y_i] \in B_{r_i}([x])\cap(N_{r_i/2}(\gamma [0,1])-N_{r_i/4}(\gamma [0,1])),
\]
where $r_i = r/\lambda_i$  and $N_{s}(\gamma [0,1])= \cup_{[z]\in \gamma [0,1]}B_s([z]).$

Observe that for all $i$ we have that $d^\ast([y_i], \gamma [0,1]\cap \bar{B}_{r_i}([x]))\geq r_i/4.$ We also have that points in $\gamma [0,1]$ must converge to $\mathbb{R}.$ But then the sequence of the points $[y_i]$ must converge to some $[y_{\infty}]$ such that its distance to $\mathbb{R}$ must be strictly positive. As the only possible tangent of $[x]$ is $\mathbb{R}$ this is a contradiction.
Then for all $r>0$ we must have that 
\[
B_{r_i}([x])\cap(N_{r_i/2}(\gamma [0,1])-N_{r_i/4}(\gamma [0,1])) = \emptyset
\]
This yields that the interior of $\gamma [0,1]$ must be non-empty. Then there exists an open set in $X/G$ that is homeomorphic to $\mathbb{R},$ recalling that $X/G$ is non-branching it follows that then it must be a $1-$dimensional manifold, possibly with boundary.

    \color{black}

\end{proof}

\begin{proof}[Proof of Theorem \ref{thm:dimension-difference}] 
Let $x \in X$ be an  $n$-regular point such that $[x] \in X/G$ is $m$-regular, and let $\lambda_i \to \infty$. By Theorem \ref{thm:fy}, after taking a subsequence, we have
    \[   (\lambda_i X, G, x) \xrightarrow{eGH} (\mathbb{R}^n, G_{\infty}, 0 ) , \]
    for some closed subgroup $G_{\infty} \leq \iso (\mathbb{R}^n) $. By \cite[Proposition 44]{santos-zamora},  $\mathbb{R}^n$ admits a $G_{\infty}$-invariant splitting  $\mathbb{R}^m \times \mathbb{R}^{n-m}$, for which $G_{\infty}$ acts trivially on the first factor. Finally, by Theorem \ref{thm:main} and Remark \ref{rem:rcd}, we have 
    \[   \dime (G) \leq \dime (G_{\infty} ) \leq  \frac{(n-m)(n-m+1)}{2} .                   \]    
\end{proof}

\section*{Acknowledgements}
 The authors would like to express their gratitude to Jiayin Pan and Logan Richard for helpful discussions.  The authors would also like to thank Conrad Plaut for interesting conversations that led to Proposition \ref{pro:uniform-continuity} and Theorem \ref{thm:abu}. JNZ wishes to thank the PAPIIT-UNAM Project IA103925 for financial support. 
JSR thanks the support from grant PID2024-158664NB-C21 from Agencia Estatal de Investigaci\'on (Spain).
 
\printbibliography

\end{document}